\documentclass[11pt,fleqn]{amsart} % \documentclass[11pt]{article}

\usepackage[usenames,dvipsnames,condensed]{xcolor}

\usepackage{shuffle}
\usepackage{amsthm}
\usepackage{amsfonts}
\usepackage[english]{babel}
\usepackage[usenames]{xcolor}
\usepackage{graphicx}
\usepackage{soul}
\usepackage{stfloats}
\usepackage{morefloats}
\usepackage{cite}
\usepackage{lscape}
\usepackage{epstopdf}
\usepackage{tikz-cd}
\usepackage{tikz}
\usepackage{bbm}
\usepackage[lite]{amsrefs}

\usepackage{amsmath,amstext,amsopn,amsfonts,eucal,amssymb}

\usepackage{graphicx,wrapfig,url}

\newcommand\Z{{\mathbb Z}}

\newtheorem{theorem}{Theorem}[section]

\newtheorem{lemma}[theorem]{Lemma}
\newtheorem{proposition}[theorem]{Proposition}
\newtheorem{definition}[theorem]{Definition}

\newtheorem{example}[theorem]{Example}

\newtheorem{remark}[theorem]{Remark}

%\newtheorem{claim}{Claim}
%\newtheorem{note}[theorem]{Note}
%\newtheorem{case}{\textit{Case}}
%\newtheorem*{ProofTheorem}{Proof of Theorem \ref{Theorem1}}
%\newtheorem*{ProofTheorem2}{Proof of Theorem \ref{SecondTheorem}}
%\newtheorem*{ProofCorol}{Proof of Corollary \ref{ColorThem}}
%\newtheorem*{proofLemmaDS}{Proof of Lemma \ref{TheoremForMinGenus}}
%\newtheorem*{theorem1.2}{Theorem 1.2}
%\newtheorem*{theorem1.3}{Theorem 1.3}
%\newtheorem*{corollary1.4}{Corollary 1.4}

%\newcommand{\dbr}{\}\!\}}

%%%%%

\begin{document}

\title{Heap and Ternary Self-Distributive Cohomology}

\author{Mohamed Elhamdadi} 
\address{Department of Mathematics, 
	University of South Florida, Tampa, FL 33620, U.S.A.} 
\email{emohamed@math.usf.edu} 

\author{Masahico Saito} 
\address{Department of Mathematics, 
	University of South Florida, Tampa, FL 33620, U.S.A.} 
\email{saito@usf.edu} 

\author{Emanuele Zappala} 
\address{Department of Mathematics, 
	University of South Florida, Tampa, FL 33620, U.S.A.} 
\email{zae@mail.usf.edu}

\maketitle

\begin{abstract}
Heaps are para-associative ternary operations bijectively exemplified by %pointed 
groups
via the operation $(x,y,z) \mapsto x y^{-1} z$. 
They are also ternary self-distributive, and have a diagrammatic interpretation in terms of framed links.
Motivated by these properties, we define 
%two kinds of cohomologies, which we call 
para-associative %type zero 
and heap cohomology theories and also a ternary self-distributive cohomology theory with abelian 
heap coefficients. We show that 
one of %type zero 
the heap cohomologies is related to group cohomology via a long exact sequence. Moreover we construct 
%second cohomology 
 %group 
maps between
 second cohomology groups of
normalized group cohomology and heap cohomology, and %we 
show that the latter injects into the ternary self-distributive second cohomology group.
% Relations among cocycles of heap, group, and ternary self-distributive operations are discussed.
We %therefore 
proceed to study heap objects in symmetric monoidal categories providing a characterization of pointed heaps as involutory Hopf monoids in the given category. Finally we prove that heap objects are also ``categorically" self-distributive in an appropriate sense.
%Internalizations are studied in symmetric monoidal category. 
\end{abstract}

\tableofcontents

\section{Introduction}

Self-distributive binary operations, called shelves, and their cohomology theories have been
extensively studied in recent decades with applications to constructing invariants of classical knots and knotted surfaces. 
Ternary self-distributivity and its cohomology has been studied, for example,  in \cite{CEGM,ESZ}, and shown in \cite{ESZ} to have a diagrammatic interpretation in terms of framed links.  
Constructions of ternary self-distributive operations  from binary ones are also given  in \cite{ESZ},
and 
it was shown 
that the (co)homology of ternary operations thus obtained
and the (co)homology of the binary operations used for this construction 
are related through certain (co)chain maps.  

%A heap is a para-associative ternary operation with additional conditions, and is exemplified by a group $G$ with the operation defined by $(x,y,z)\mapsto xy^{-1}z$. 
%%% merged with the following: 
A heap is an abstraction of the ternary operation $a\times b\times c \mapsto ab^{-1}c$ in a group, that allows to ``forget''
which element of the group is the unit. In fact the operation just described extends to a functor that determines an equivalence between the category of 
pointed (i.e. an element is specified) heaps and the category of groups. 
More specific definitions will be given in Section~\ref{sec:prelim}.
Heaps have been studied in algebra and algebraic geometry under the name of torsors. %\cite{Kon}. % unclear %%% \cite{Okubo1, Okubo2}. 
They appeared in knot theory in relation to region colorings as well (see, for example, \cite{NOO, Maciej}). 
It was pointed out in \cite{CEGM} that heaps are ternary self-distributive. 
A diagrammatic representation of such  operations was given in terms of framed links in \cite{ESZ}. 
In the present paper, we introduce and develop a cohomology theories for heaps, a homology theory for ternary self-distributivity (TSD) %homology 
with heap coefficients,  and present relations between them.
%We then present their relations.
 
Specifically, the para-associativity comes in three types which we call type 0, 1 and 2, where
 type 0 takes the familiar form $[ [x,y,z]u,v]=[x,y,[z,u,v]]$. In this case we define a chain complex in a similar manner to 
 group homology, and establish a long exact sequence relating the two %co
 homology theories. %% (Proposition~\ref{4.1}).
 For the other types, however, such an analogue in general dimensions is elusive.
 Thus we take an approach of defining low dimensional cochain maps from point of view of extensions, and 
 %for 
 with the goal 
 of applying them to  the TSD cohomology. In particular, the second cohomology group classifies isomorphism classes of 
 heap extensions. %% (Proposition~\ref{3.16}). %%
We also provide relations between 2-cocycles for groups and para-associative 2-cocycles  of types 1 and 2. 
Our motivation is to use this relation to construct TSD cocycles from group cocycles.

The main results of the paper are introducing and studying a TSD homology theory with abelian group heap coefficients. 
This differs from   \cite{CEGM,ESZ}
in that the heap structure of the coefficient is essentially used in the definition of the chain complex. 
The definition again provides the classification of extensions with heap coefficients by the second TSD cohomology 
group. %%  (Prposition~\ref{5.7}). %%
We then present an injective  map from heap to TSD cohomology groups $H^2_{\rm H}(X, A) \rightarrow H^2_{\rm SD}(X,A)$ in dimension 2. %% (Theorem~\ref{6.2}). %%
Non-trivial examples are provided throughout.

Binary self-distributive operations have been studied in relation to the Yang-Baxter operators 
though tensor categories (e.g., \cite{CCES}).
 In \cite{ESZ} a diagrammatic interpretation of TSD was given in terms of framed links,
providing set-theoretic Yang-Baxter operations. It is, then, a natural question whether the constructions of TSD operations
from heaps generalize to monoidal categories.
For this goal we introduce category versions of heaps and TSD operations,
and prove that a heap object in symmetric monoidal category is also a TSD object. %% (Theorem~\ref{7.11}).

The paper is organized as follows. 
After a review of  basic materials of heaps in Section~\ref{sec:prelim}, a cohomology theory 
and extensions by 2-cocycles are presented in Section~\ref{sec:heapcoh}. 
Using the bijection between pointed heaps and groups, 
constructions of heap 2-cocycles from group 2-cocycles, and vice versa, are discussed in Section~\ref{sec:gpheap}. 
A cohomology theory of ternary self-distributive (TSD) operations with abelian group heap coefficients is introduced 
in Section~\ref{sec:TSD}, and the extension theory is built from 2-cocycles.
A construction of TSD 2-cocycles from heap 2-cocycles is given in Section~\ref{sec:heapTSD}, 
and internalization in the symmetric monoidal category is discussed in Section~\ref{sec:cat}.

\section{Basic Review of Heaps}\label{heaprev} \label{sec:prelim}

In this section we recall the definitions of para-associative structure on a set and introduce the nomenclature that will be followed throughout the rest of the article. 
Given a set with a ternary operation $[ - ]$, we call the equalities 
%$$ % 
\begin{eqnarray*}
%\begin{array}{rcl}
{} [  [ x_1, x_2, x_3 ], x_4, x_5  ] & = & [  x_1,x_2, [  x_3, x_4, x_5 ]  ]  \\
{}  [  [ x_1, x_2, x_3 ], x_4, x_5  ] & = & [  x_1,[  x_4, x_3, x_2 ]  ,  x_5 ]  \\
{}  [  x_1,x_2, [  x_3, x_4, x_5 ]  ] &=& [  x_1,[  x_4, x_3, x_2 ],  x_5  ]  
%\end{array}
%$$ % 
\end{eqnarray*}
the type 0, 1, 2 para-associativity (or simply %equality
para-associativity), 
respectively.
Observe that any pair of equalities of type 0, 1 or 2 implies that the remaining one also holds.  
If a ternary operation satisfies all types of para-associativity, then it is called para-associative.
We call the condition $[x,x,y]=y$ and $[x, y, y ] = x$ the degeneracy (conditions).

\begin{definition}
	{\rm 
		A heap is a non-empty set with a ternary operation satisfying 
		para-associativity % type 0, 1, 2 equalities 
		and degeneracy conditions.
	}
\end{definition}

We mention that a structure satisfying the para-associativity conditions only, is called {\it semi-heap} in Chapter 8 of \cite{Vag}.

A typical example of a heap is a group $G$ where the ternary operation is given by $[x,y,z]=xy^{-1}z$,
which we call a {\it group heap}. If $G$ is abelian, we call it an {\it abelian (group) heap}. 
Conversely, 
given a heap $X$ with a fixed element $e$, %then 
one defines a binary operation on $X$ by $x*y=[x,e,y]$ which makes $(X,*)$ into a group with $e$ as the identity, and the inverse of $x$ is $[e,x,e]$ for any $x \in X$.

We refer the reader to the classical reference \cite{BH}, chapter IV, where it can also be found a short 
historical background and a 
description in terms of universal algebra.  
%Heaps are also known under different names
%such as {\it torsor} and {\it groud}. 
In \cite{Sk} a quantum version of heap was introduced and it has been shown, in analogy to the ``classical''  case, that the category of quantum heaps is equivalent to the category of pointed Hopf algebras. Further developments of the thematics introduced in \cite{Sk} can also be found in \cite{Gr,Sch}. Other  
sources include  
\cite{Kon,BS}. We observe that the definition of quantum heap given in \cite{Sk} is in some sense dual to the notion of heap object in a symmetric monoidal category, that we introduce in Section \ref{sec:cat}. Our heap objects in symmetric monoidal categories are much in the same spirit as in the definition of non-commutative torsor treated in \cite{BS}.

\section{Heap Cohomology}\label{sec:heapcoh}

In this section we introduce %the
a heap cohomology.
Let $X$ be a set with a ternary operation  $[  -  ]$ and $A$ be an abelian group.
The $n$-dimensional {\em cochain group} $C^n_{\rm PA}(X, A)$, 
  is the 
  group of functions $\{ f: X^{2n-1} \rightarrow A\}$
  for $n = 1,2 $.

\begin{definition}\label{def:PA1cocy}
	{\rm 
	Let $X$ be a set with a para-associative ternary operation $[  -  ]$ and $A$ be an abelian group. 
	Then the 1-dimensional coboundary map $\delta^1: C^1_{\rm PA}(X, A) \rightarrow C^2_{\rm PA} (X, A)$ is defined for $f \in C^1_{\rm PA} (X, A)$  by 
		%The heap $1$-cocycle condition is defined to be:
		$$ 
		\delta^1f(x,y,z) = f([x,y,z]) - f(x) +f(y) -f(z).
		$$
The kernel $Z^1_{\rm PA} (X, A)$ of $\delta^1$ is called the 1-dimensional cocycle group.
In this case we define 1-dimensional cohomology group $H^1_{\rm PA} (X, A)$ to be $Z^1_{\rm PA} (X, A)$.
}
\end{definition}

We observe that $f \in Z^1_{\rm PA} (X, A)$ if and only if $f$ is a para-associative homomorphism from $X$ to 
$A$ regarded as an abelian heap. 
We determine $Z^1_{\rm PA} (X, A)$ for the following two examples.
\begin{example}\label{ex:Z1heap}
	{\rm
Consider $\Z_2$ 
 with the abelian heap operation 
 $[x,y,z] = x+y+z$. 
 We  
 compute the group $Z^1_{\rm PA} (\Z_2, \Z_2)$.
  Given three variables $x,y$ and $z$, at least two of them need to coincide. Consider the case when  $x = y$, the $1$-cocycle condition becomes $f([x,x,z]) = f(z)$ which is 
   satisfied. 
   The other cases are analogous. It follows that $ Z^1_{\rm PA} (\Z_2, \Z_2) = C^1_{\rm PA}(\Z_2, \Z_2) \cong \Z_2\oplus \Z_2$.  
   This operation is also ternary self-distributive, see Lemma \ref{lem:heapTSD}.
}
\end{example} 

\begin{example}
	{\rm
We proceed  to compute $Z^1_{\rm PA} (\Z_3, \Z_n)$, where $\Z_3$ is given the same ternary operation as before: $[x,y,z] = x+y+z$. Observe that this operation does not define a heap, but it is para-associative. Take $x=y=1$ in the $1$-cocycle condition. We obtain $f(z+2) = f(z)$, which implies that $f$ is  
the constant map. It follows that $Z^1_{\rm PA} (\Z_3, \Z_n) \cong \Z_n$ for all odd integers $n$.
}
\end{example}

\begin{definition}\label{def:PA2}
{\rm
Define $C^3_{\rm PA(i)} (X, A)$ for $i=0,1,2$ to be three isomorphic copies of the %free abelian group generated by
abelian group of functions $\{ f: X^{5} \rightarrow A\}$.
The  
2-dimensional coboundary map $\delta^2_{(i)} : C^2_{\rm PA} (X, A) \rightarrow C^3_{\rm PA(i)} (X, A)$ 
of type $i=0,1,2$, respectively, are defined by 
\begin{eqnarray*}
%\lefteqn{ 
\delta^2_{(0)} \eta (x_1, x_2, x_3, x_4, x_5)   % } \\
&=& \eta (x_1, x_2, x_3) + \eta ( [ x_1, x_2, x_3],  x_4, x_5 ) \\
& & -  \eta  (x_3, x_4, x_5)  -   \eta  (x_1, x_2,  [x_3, x_4, x_5] )) , \\
%\lefteqn{  
\delta^2_{(1)} \eta  (x_1, x_2, x_3, x_4, x_5)  % } \\
&=&  \eta  (x_1, x_2, x_3) + \eta ( [ x_1, x_2, x_3],  x_4, x_5 ) \\
& & +  \eta  (x_4, x_3, x_2) -   \eta (x_1, [x_4, x_3, x_2], x_5) ) , \\
% \lefteqn{ 
\delta^2_{(2)} \eta (x_1, x_2, x_3, x_4, x_5) %  }\\
&=&    \eta  (x_3, x_4, x_5 )  +   \eta  (x_1,  x_2, [ x_3, x_4 , x_5] ) \\
& &  +  \eta  (x_4, x_3, x_2) -   \eta  (x_1, [x_4, x_3, x_2], x_5) ).   \end{eqnarray*}

}
\end{definition}

Direct calculations give the following.

\begin{lemma} \label{lem:d2d1}
If $X$ has a para-associative operation $[ -  ]$, then 
$\delta^2_{(i)} \delta^1 =0$ for $i=0,1,2$. 
\end{lemma}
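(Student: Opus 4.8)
The plan is to verify the identity $\delta^2_{(i)}\delta^1 = 0$ directly for each $i \in \{0,1,2\}$ by expanding $\delta^2_{(i)}\delta^1 f$ on a general $5$-tuple $(x_1,x_2,x_3,x_4,x_5)$ and showing that all terms cancel, using only the definition of $\delta^1$ and the appropriate para-associativity relation. Since $\delta^1 f(a,b,c) = f([a,b,c]) - f(a) + f(b) - f(c)$, each of the four $\eta$-slots in $\delta^2_{(i)}$ contributes four terms, giving sixteen signed evaluations of $f$; I would organize these into the ``outer'' terms (those of the form $f([\cdots])$ where the bracket is a threefold composite) and the ``inner'' terms (single or doubly-bracketed evaluations).

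For type $0$, I would substitute into $\delta^2_{(0)}\eta = \eta(x_1,x_2,x_3) + \eta([x_1,x_2,x_3],x_4,x_5) - \eta(x_3,x_4,x_5) - \eta(x_1,x_2,[x_3,x_4,x_5])$. The two ``outer'' composite terms are $f([[x_1,x_2,x_3],x_4,x_5])$ (with a $+$ sign, from the second slot) and $f([x_1,x_2,[x_3,x_4,x_5]])$ (with a $-$ sign, from the fourth slot); these cancel \emph{precisely} by type $0$ para-associativity. The remaining twelve terms are evaluations of $f$ at the seven points $[x_1,x_2,x_3]$, $[x_3,x_4,x_5]$, $x_1,\dots,x_5$, and a direct bookkeeping of signs (each appears with total coefficient zero) finishes this case. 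The analogous computation for types $1$ and $2$ uses, respectively, the relation $[[x_1,x_2,x_3],x_4,x_5] = [x_1,[x_4,x_3,x_2],x_5]$ and $[x_1,x_2,[x_3,x_4,x_5]] = [x_1,[x_4,x_3,x_2],x_5]$ to cancel the two outer composite terms, after which the inner single/double-bracketed terms and the $x_i$ again cancel by sign bookkeeping; note that the middle argument $[x_4,x_3,x_2]$ appearing in types $1$ and $2$ is itself only a single bracket, so it contributes its own four-term expansion that must be tracked.

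The only genuine subtlety — and the step I would flag as the main obstacle — is the sign and multiplicity bookkeeping: one must be careful that a point like $[x_3,x_4,x_5]$, which appears both as an explicit argument of $\eta$ in the third slot and \emph{inside} the bracket of the fourth slot, collects the right total coefficient; and similarly that each $x_i$ does. There is no conceptual difficulty once the para-associativity relation has disposed of the doubly-composed outer term; the rest is a finite verification. I would present this compactly, perhaps in a single displayed alignment per type, rather than enumerating all sixteen terms in prose.
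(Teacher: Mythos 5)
Your proposal is correct and takes essentially the same approach as the paper, whose proof is simply the remark that direct calculation gives the result: in each type the two triply-bracketed terms cancel by the corresponding para-associativity relation, and the remaining evaluations of $f$ (at $[x_1,x_2,x_3]$, $[x_3,x_4,x_5]$ or $[x_4,x_3,x_2]$, and at $x_1,\dots,x_5$) pair off with opposite signs. One trivial bookkeeping correction: in the type $0$ case there are fourteen, not twelve, remaining terms, since each of the seven points occurs exactly twice.
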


\begin{definition}
{\rm
Let  $X$ be a set with a para-associative operation $[ -  ]$ and let $A$ be an abelian group.
Define $C^3_{\rm PA}(X, A)= C^3_{\rm PA(1)}(X,A)  \oplus C^3_{\rm PA(2)}(X,A)$.
Then $\delta^2=\delta^2_{(1)} \oplus \delta^2_{(2)}$ defines a homomorphism
$C^2_{\rm PA}(X,A) \rightarrow C^3_{\rm PA}(X, A)$.
Define the group of $2$-cocycles
% $2^{\rm nd}$ cocycle group 
 $Z^2_{\rm PA}(X,A) $ by 
${\rm ker} ( \delta^2 ) $. 
Define the $2^{\rm nd}$ coboundary  group $B^2_{\rm PA}(X,A) $ by ${\rm im}(\delta^1)$. 
Then the  $2^{\rm nd}$ cohomology  group is defined as usual: $H^2_{\rm PA}(X,A) =
Z^2_{\rm PA}(X,A) /  B^2_{\rm PA}(X,A)$.
}
\end{definition}

\begin{definition}\label{1cocy}
{\rm
Let  $X$ be a set with a para-associative operation $[ -  ]$ and let $A$ be an abelian group.
A 2-cocycle $\eta \in Z^2_{\rm PA}(X,A) $ is said  to satisfy the {\it degeneracy condition}
if the following holds for all $x, y \in X$:
$\eta (x, x, y)=0 = \eta ( x,y,y)$.
}
\end{definition}

We observe that 2-coboundaries $\delta^1 f$ satisfy the degeneracy condition. 

\begin{definition}
{\rm
Let  $X$ be a heap  and $A$ be an abelian group.
The $2^{\rm nd}$ heap cocycle group $Z^2_{\rm H}(X,A) $ is defined as 
the subgroup of $Z^2_{\rm PA}(X,A) $ %generated by the
consisting of 
2-cocycles that  satisfy the degeneracy conditions.
 The  $2^{\rm nd}$ heap cohomology  group $H^2_{\rm H}(X,A) $ is defined 
as the quotient  $  Z^2_{\rm H}(X,A) /  B^2_{\rm PA}(X,A)$. 
}
\end{definition}

\begin{example}\label{ex:H2heap}
{\rm
Let $X = \Z_2$ with group heap operation and $A = \Z_2$. 
Computations show that $\eta \in Z^2_{\rm PA}(X,A) $ if and only if $\eta$ satisfies the following set of equations:
\begin{eqnarray*}
 & & 
\eta (0,0,0)= \eta (0,0,1)  = \eta (1,0,0) ,\\
& & 
\eta (1,1,1) = \eta (1,1,0) = \eta (0,1,1) , \\
& & 
\eta (0,0,0) + \eta(1,1,1) %% changed for symmetry :\eta (0,0,1) + \eta(0,1,1)  
+ \eta(0,1,0) + \eta(1,0,1) =0 . 
\end{eqnarray*}
Express $\eta=\sum \eta(x,y,z) \chi_{(x,y,z)} $ by characteristic functions $ \chi_{(x,y,z)}$.
By setting $\eta (0,0,0)=a$, $ \eta (1,1,1)=b$ and  $\eta(0,1,0) =c$, the last equation above 
implies $\eta (1,0,1) = - (a+b+c)$.  
Then $\eta$ is expressed as 
\begin{eqnarray*}
\eta&=& 
a( \chi_{ (0,0,0)} + \chi_{ (0,0,1)  } + \chi_{ (1,0,0) } - \chi_{( 1,0,1) }  ) \\
& + & 
b (  \chi_{ (1,1,1) } + \chi_{ (1,1,0)  } + \chi_{ (0,1,1) } - \chi_{( 1,0,1) } )  \\
& + & 
c (  \chi_{ (0,1,0)} - \chi_{ (1,0,1)  } ) . 
\end{eqnarray*}
Since the group of coboundaries is zero  from Example \ref{ex:Z1heap}, it follows that 
 $H^2_{\rm PA}(\Z_2,\Z_2) = Z^2_{\rm PA}( \Z_2,\Z_2 )\cong \Z_2 \oplus \Z_2 \oplus \Z_2 $. 
Since the degeneracy condition implies $a=b=0$, we have  $H^2_{\rm H}(\Z_2,\Z_2) \cong \Z_2$.

}
\end{example}

\begin{definition}\label{def:heapext}
	{\rm
		Let  $X$ be a heap, $A$ be an abelian group and $\eta : X\times X\times X \rightarrow A$ be a $2$-cochain. We define the heap extension of $X$ by the $2$-cochain $\eta$ with coefficients in $A$, denoted $X\times_{\eta} A$, as the cartesian product $X\times A$ with ternary operation given by:
		$$ 
		[(x,a),(y,b),(z,c)] = ([x,y,z], a- b + c + \eta(x,y,z)).
		$$
	}
\end{definition}

%% added
Although direct computation gives the following lemma, it is one of the motivations  
of the definition of the heap differential maps. 

\begin{lemma}\label{ext}
The abelian extension by a $2$-cochain $\eta$ satisfies 
the equality of type 1, 2, and degeneracy if and only if $\eta$ is a heap $2$-cocycles of type 1, 2, and with degeneracy condition, respectively. 
In particular, a $2$-cochain  $\eta$ defines 
%an extension heap 
a heap extension
if and only if it satisfies all conditions.
\end{lemma}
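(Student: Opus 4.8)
The plan is a direct verification. I would substitute the extension operation of Definition~\ref{def:heapext} into each of the type~1 and type~2 para-associativity equations and into the degeneracy conditions, note that the $X$-coordinates match automatically because $X$ is a heap, and then compare the $A$-coordinates. Since $A$ is abelian and the second-coordinate entries $a_i\in A$ enter only additively with fixed signs, all the $a_i$-terms cancel identically on the two sides of every such equation; what survives is a relation purely among the values of $\eta$ at triples built from $x_1,\dots,x_5$. The whole point of the lemma is that these surviving relations are precisely $\delta^2_{(1)}\eta=0$, $\delta^2_{(2)}\eta=0$, and $\eta(x,x,y)=0=\eta(x,y,y)$ coming from Definitions~\ref{def:PA2} and~\ref{1cocy}.

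Concretely, for type~1 I would expand $[\,[(x_1,a_1),(x_2,a_2),(x_3,a_3)],(x_4,a_4),(x_5,a_5)\,]$, obtaining $A$-coordinate $a_1-a_2+a_3-a_4+a_5+\eta(x_1,x_2,x_3)+\eta([x_1,x_2,x_3],x_4,x_5)$, and compare with $[(x_1,a_1),[(x_4,a_4),(x_3,a_3),(x_2,a_2)],(x_5,a_5)]$, whose $A$-coordinate is $a_1-a_2+a_3-a_4+a_5-\eta(x_4,x_3,x_2)+\eta(x_1,[x_4,x_3,x_2],x_5)$; equality of the extension's products is thus equivalent to $\eta(x_1,x_2,x_3)+\eta([x_1,x_2,x_3],x_4,x_5)+\eta(x_4,x_3,x_2)-\eta(x_1,[x_4,x_3,x_2],x_5)=0$, i.e.\ $\delta^2_{(1)}\eta=0$. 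The type~2 case is entirely parallel: comparing the $A$-coordinates of $[(x_1,a_1),(x_2,a_2),[(x_3,a_3),(x_4,a_4),(x_5,a_5)]]$ and $[(x_1,a_1),[(x_4,a_4),(x_3,a_3),(x_2,a_2)],(x_5,a_5)]$ produces $\delta^2_{(2)}\eta=0$. For degeneracy, $[(x,a),(x,a),(y,b)]=(y,\,b+\eta(x,x,y))$ and $[(x,a),(y,b),(y,b)]=(x,\,a+\eta(x,y,y))$, so the extension satisfies the degeneracy conditions exactly when $\eta(x,x,y)=0=\eta(x,y,y)$ for all $x,y\in X$.

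For the last clause I would invoke the observation recorded in Section~\ref{sec:prelim} that any two of the three para-associativity types force the third; hence once $X\times_{\eta} A$ satisfies types~1 and~2 it is para-associative, and once the degeneracy conditions also hold it is a heap. Together with the three equivalences above, this shows that $X\times_{\eta} A$ is a heap precisely when $\eta$ is simultaneously a type~1 and a type~2 $2$-cocycle satisfying the degeneracy condition, i.e.\ when $\eta\in Z^2_{\rm H}(X,A)$. I do not expect any genuine obstacle: the only thing demanding care is the sign bookkeeping in the nested substitutions, together with the remark that, since each identity must hold for all $a_i\in A$ while the $a_i$ occur linearly with matching coefficients on both sides, the verification genuinely collapses to the $\eta$-only relations and imposes nothing stronger.
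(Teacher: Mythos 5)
Your proposal is correct and is exactly the direct verification the paper has in mind (the paper simply states that the lemma follows by direct computation and omits the details); your expansions of the $A$-coordinates match the definitions of $\delta^2_{(1)}$, $\delta^2_{(2)}$, and the degeneracy condition precisely, and the final appeal to the fact that any two para-associativity types imply the third is the right way to get the last clause.
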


%\begin{proof}
%We prove the lemma for type 1 equality and type 1 cocycle condition, the remaining cases being completely analogous.\\
%Let $\eta : X\times X\times X \longrightarrow A$ be a $2$-chain. We have:
%\begin{eqnarray*}
%\lefteqn{[[(x,a),(y,b),(z,c)], (u,d),(v,e)]} \\
%&=&
%  ([[x,y,z],u,v], a-b +c +\eta(x,y,z) - d +e + \eta([x,y,z],u,v)).
%\end{eqnarray*}
%It also holds:
% \begin{eqnarray*}
% 	\lefteqn{[(x,a),[(u,d),(z,c),(y,b)],(v,e)]}\\
% 	&=& ([x,[u,z,y],v], a-d+c-b-\eta(u,z,y)+e+\eta(x,[u,z,y],v).
% 	\end{eqnarray*}
% The two terms coincide (and hence equality 1 holds) if and only if $\eta$ satisfies the $2$-coclycle condition of type 1. 
%\end{proof}

\begin{example}
{\rm
The following  is a common construction applied to the heap.
Let $0 \rightarrow A \stackrel{\iota}{\rightarrow}  E \stackrel{\pi }{\rightarrow} G \rightarrow 0 $ be 
a short exact sequence of abelian groups, and $s: G \rightarrow E$ be a set-theoretic section ($\pi s = {\rm id}$). 
Since $s$ is a section, we have that $  s(x) - s(y) + s(z) - s( [x,y,z] ) $ is in the kernel of $\pi$ for all $x,y,z \in G$, so that there is 
$\eta: G \times G \times G \rightarrow A$ such that 
$$\iota \eta (x, y, z) =  s(x) - s(y) +s(z) - s( [x,y,z] )   . $$
Then computations of the two 2-cocycle conditions and the degeneracy conditions give the following.
}
\end{example}

\begin{lemma}
	$\eta \in Z^2_{\rm H} (G, A)$.
\end{lemma}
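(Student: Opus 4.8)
The plan is to verify directly the three conditions defining membership in $Z^2_{\rm H}(G,A)$: the type $1$ and type $2$ cocycle identities $\delta^2_{(1)}\eta = 0$, $\delta^2_{(2)}\eta = 0$, and the degeneracy condition $\eta(x,x,y) = 0 = \eta(x,y,y)$ of Definition \ref{1cocy}. Since $\iota$ is injective, it suffices to check each identity after applying $\iota$, at which point every occurrence of $\iota\eta(a,b,c)$ may be replaced by the explicit expression $s(a) - s(b) + s(c) - s([a,b,c])$, reducing the problem to cancellation among values of the section $s$ together with para-associativity in the group heap $G$.

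First, the degeneracy conditions. Using $[x,x,y] = y$ and $[x,y,y] = x$ in $G$, one computes $\iota\eta(x,x,y) = s(x) - s(x) + s(y) - s(y) = 0$ and $\iota\eta(x,y,y) = s(x) - s(y) + s(y) - s(x) = 0$, whence $\eta(x,x,y) = 0 = \eta(x,y,y)$ by injectivity of $\iota$. Next, for the type $1$ identity, I would write out $\iota\,\delta^2_{(1)}\eta(x_1,\dots,x_5)$ as the sum of the four terms $\iota\eta(x_1,x_2,x_3)$, $\iota\eta([x_1,x_2,x_3],x_4,x_5)$, $\iota\eta(x_4,x_3,x_2)$, and $-\iota\eta(x_1,[x_4,x_3,x_2],x_5)$, each expanded via the formula for $\iota\eta$. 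All $s$-values cancel in pairs except $-s([[x_1,x_2,x_3],x_4,x_5])$ and $+s([x_1,[x_4,x_3,x_2],x_5])$, and these cancel because $G$, being a group heap, satisfies type $1$ para-associativity. The type $2$ identity is handled identically: after expansion the only surviving terms are $-s([x_1,x_2,[x_3,x_4,x_5]])$ and $+s([x_1,[x_4,x_3,x_2],x_5])$, which cancel by type $2$ para-associativity in $G$. Combining, $\eta \in Z^2_{\rm PA}(G,A)$, and since it also satisfies the degeneracy condition, $\eta \in Z^2_{\rm H}(G,A)$.

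There is no real obstacle here; the statement is routine once set up correctly. The only two points deserving attention are (i) the well-definedness of $\eta$, namely that $s(x) - s(y) + s(z) - s([x,y,z])$ actually lies in $\mathrm{im}(\iota) = \ker(\pi)$ — which holds because $\pi s = \mathrm{id}$ and $\pi$ is a heap homomorphism, as already observed in the construction — and (ii) bookkeeping of signs in the telescoping sums.

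Conceptually, one may also avoid the computation altogether: the section $s$ induces a set bijection $E \cong G \times A$, $u \mapsto (\pi(u),\, \iota^{-1}(u - s(\pi(u))))$, with inverse $(x,a) \mapsto s(x) + \iota(a)$, and a direct check shows that the abelian-group heap operation $u,v,w \mapsto u - v + w$ on $E$ is carried by this bijection exactly to the operation of the extension $G \times_\eta A$ of Definition \ref{def:heapext}. Hence $G \times_\eta A$ is isomorphic to the group heap $E$ and is in particular a heap, so Lemma \ref{ext} yields $\eta \in Z^2_{\rm H}(G,A)$ at once. I would present the direct computation as the proof and add this reformulation as an explanatory remark.
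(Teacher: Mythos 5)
Your proposal is correct and follows essentially the same route as the paper, which simply asserts that ``computations of the two 2-cocycle conditions and the degeneracy conditions'' give the result; your expansion of $\iota\eta$ and the telescoping cancellation down to the two para-associativity terms is exactly that computation, carried out in full. The closing remark identifying $G\times_\eta A$ with the group heap $E$ via the section and invoking Lemma~\ref{ext} is a nice conceptual shortcut not present in the paper, but the main argument is the same.
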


\begin{example}\label{ex:heapextmodn} 
	{\rm 
		For a positive integer $n>0$, let $0 \rightarrow \Z_n \stackrel{\iota}{\rightarrow}  \Z_{n^2}  \stackrel{\pi }{\rightarrow} \Z_n \rightarrow 0 $
		be as above, where $s(x) \ {\rm mod}(n^2)  = x$, representing elements of $\Z_{m}$ by $\{ 0, \ldots, m-1 \}$. 
		Then for all $x, y, z \in G=\Z_n$, 
		$\iota \eta(x, y, z)$ is divisible by $n$ in $E=\Z_{n^2}$, so that the value of $\eta$ is computed by
		$\eta  (x,y,z) = \iota \eta(x, y, z) / n $. For example, 
		for $n=3$, $\eta(2, 0, 2) = [ s(2) - s(0) + s(2) - s( [2,0,2] )   ] / 3 = 1 \in \Z_3$. We will show in Example~\ref{ex:nontheap} that $[\eta]\neq 0$ and therefore $H^2_{H}(\Z_3,\Z_3)$ is nontrivial. 
	}
\end{example}

\begin{definition}\label{heapextmor}
	{\rm
Let $X\times_{\eta}	A$ and $X\times_{\eta'}	A$ be two heap extensions with coefficients in an abelian group $A$, by two $2$-cocycles $\eta$ and $\eta'$ of type 1,2 and with degeneracy condition. We define a morphism of extensions, indicated by $\phi:  X\times_{\eta}	A \longrightarrow X\times_{\eta'}	A$, to be a morphism of heaps making the following diagram (of sets) commute.
$$
\begin{tikzcd}
	X\times A\arrow[rr,"\phi"]\arrow[dr]& & X\times A\arrow[dl]\\
	& X &
\end{tikzcd}
$$
An invertible morphism of extensions is also called isomorphism of extensions, which induces an equivalence relation,
and its equivalence classes are called isomorphism classes.
}
\end{definition}

%% added
The following is one of the motivations and significance of our definition of the heap differential in dimension 2.

\begin{proposition}\label{pro:heapext}
	There is a bijective correspondence between isomorphism classes of heap extensions by $A$, and the second heap cohomology group $H^2_{\rm H}(X;A)$.
\end{proposition}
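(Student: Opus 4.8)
The plan is to exhibit an explicit bijection
\[
\Phi\colon H^2_{\rm H}(X,A)\;\longrightarrow\;\{\text{heap extensions of }X\text{ by }A\}/{\cong}
\]
sending the class of $\eta$ to the isomorphism class of the extension $X\times_\eta A$ of Definition~\ref{def:heapext}. By Lemma~\ref{ext}, $X\times_\eta A$ is actually a heap precisely when $\eta\in Z^2_{\rm H}(X,A)$, and by definition every heap extension of $X$ by $A$ arises this way; hence, once $\Phi$ is seen to be well defined, it is automatically surjective. The whole content of the proposition is therefore the equivalence
\[
X\times_\eta A\;\cong\;X\times_{\eta'}A\ \text{ as extensions}\qquad\Longleftrightarrow\qquad \eta-\eta'\in B^2_{\rm PA}(X,A)={\rm im}(\delta^1).
\]

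For the implication $(\Leftarrow)$ I would take $f\in C^1_{\rm PA}(X,A)$ and verify directly that the ``translation''
\[
\phi_f\colon X\times A\longrightarrow X\times A,\qquad \phi_f(x,a)=(x,\,a+f(x)),
\]
is a morphism of heaps $X\times_\eta A\to X\times_{\eta+\delta^1 f}A$ (the target is again a heap extension since $\delta^1 f\in Z^2_{\rm H}(X,A)$): expanding $\phi_f\big([(x,a),(y,b),(z,c)]\big)$ and $\big[\phi_f(x,a),\phi_f(y,b),\phi_f(z,c)\big]$ and comparing $A$-coordinates reduces the required equality to the defining formula $\delta^1 f(x,y,z)=f([x,y,z])-f(x)+f(y)-f(z)$ of Definition~\ref{def:PA1cocy}. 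Since $\phi_f$ is a bijection (with inverse $\phi_{-f}$) commuting with the projections to $X$, it is an isomorphism of extensions in the sense of Definition~\ref{heapextmor}; this simultaneously shows that $\Phi$ is well defined on cohomology classes.

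The substantive direction is $(\Rightarrow)$. Let $\phi\colon X\times_\eta A\to X\times_{\eta'}A$ be an isomorphism of extensions. Commutativity of the triangle of Definition~\ref{heapextmor} forces $\phi(x,a)=(x,\psi(x,a))$ for a map $\psi\colon X\times A\to A$ with each $\psi(x,-)$ a bijection of $A$, and the fact that $\phi$ preserves the ternary operation becomes
\[
\psi\big([x,y,z],\,a-b+c+\eta(x,y,z)\big)=\psi(x,a)-\psi(y,b)+\psi(z,c)+\eta'(x,y,z).
\]
I would then specialize using the degeneracy data: setting $z=y$ and using $[x,y,y]=x$ together with $\eta(x,y,y)=\eta'(x,y,y)=0$ (valid because $\eta,\eta'\in Z^2_{\rm H}$) gives $\psi(x,a-b+c)=\psi(x,a)-\psi(y,b)+\psi(y,c)$; from this one extracts that $\psi(x,a)=a+f(x)$ where $f(x):=\psi(x,0)$. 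Substituting this back into the displayed identity collapses it to $\eta'(x,y,z)-\eta(x,y,z)=\delta^1 f(x,y,z)$, so $\eta-\eta'\in{\rm im}(\delta^1)$. Combined with $(\Leftarrow)$, this shows $\Phi$ is a well-defined injection, and surjectivity was already observed, so $\Phi$ is the desired bijection.

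I expect the main obstacle to be the step ``$\psi(x,a)=a+f(x)$''. The specialized identity only yields that $a\mapsto \psi(x,a)-\psi(x,0)$ is a group endomorphism of $A$ independent of $x$; one must argue --- and this is where the degeneracy conditions, together with the precise meaning of ``morphism of extensions'' in Definition~\ref{heapextmor}, are genuinely used --- that this endomorphism is the identity, i.e.\ that every isomorphism of extensions is a translation by a $1$-cochain. Everything else is a routine comparison of $A$-coordinates.
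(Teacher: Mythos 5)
Your strategy is exactly the ``standard argument'' the paper invokes (its own proof is a one-line reference to the group-theoretic case), and your direction $(\Leftarrow)$ is correct as written. The gap you flag at the end, however, is genuine, and with Definition~\ref{heapextmor} taken literally it cannot be closed: the specialized identity only gives $\psi(x,a)=g(a)+f(x)$ for some automorphism $g$ of $A$ independent of $x$, and substituting this back into the full morphism equation yields $\eta'=g\circ\eta+\delta^1 f$, not $\eta'=\eta+\delta^1 f$. Nothing in the commutativity of the triangle over $X$ forces $g=\mathrm{id}$. Concretely, $(x,a)\mapsto(x,-a)$ commutes with the projection to $X$ and is a heap isomorphism $X\times_\eta A\to X\times_{-\eta}A$; for $X=A=\Z_3$ and the nontrivial class $[\eta]$ of Example~\ref{ex:nontheap}, one has $[-\eta]=-[\eta]\neq[\eta]$, so isomorphic extensions would carry distinct cohomology classes and the claimed bijection would fail.

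The missing ingredient is the normalization built into the group-theoretic notion of equivalence of extensions: a morphism of extensions must also be compatible with the $A$-direction, i.e.\ restrict on each fibre $\{x\}\times A$ to translation by $f(x)$ (equivalently, be equivariant for the action $(x,a)\cdot b=(x,a+b)$, or fit into a commutative diagram involving the inclusion of $A$ as well as the projection to $X$). Once that condition is imposed, $g=\mathrm{id}$ is immediate, $\psi(x,a)=a+f(x)$, and the rest of your computation correctly gives $\eta'-\eta=\delta^1 f$. So the proposal follows the right route, but you should either derive $g=\mathrm{id}$ from a strengthened definition of morphism of extensions or note explicitly that Definition~\ref{heapextmor} needs this extra requirement; as stated, your argument only classifies extensions up to the action of $\mathrm{Aut}(A)$ on $H^2_{\rm H}(X,A)$.
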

\begin{proof}
	Standard arguments, similar to the group-theoretic case, give the result.
%	Let us first assume that $\phi: X\times_{\eta} A\longrightarrow X\times_{\eta'}	A$ is an isomorphism of extensions. Since both $(x,a)$ and $\phi(x,a)$ project onto the same element $x$, we can write the map $\phi$, as $\phi(x,a) = (x, a + f(x))$, for some set-theoretic function $f: X \longrightarrow A$. We therefore have
%	$$
%	\phi([(x,a),(y,b),(z,c)]) = ([x,y,z], a-b+c+\eta(x,y,z) + f([x,y,z])).
%	$$
%	Similarly, we have
%	$$
%	[\phi(x,a), \phi(y,b),\phi(z,c)] = ([x,y,z], a + f(x) -b - f(y) + c + f(z) + \eta'(x,y,z)).
%	$$
%	Since $\phi$ is a morphism of heaps,  the two expressions are equal. It follows that $\eta = \eta' + \delta^1f$ and therefore $\eta$ and $\eta'$ are in the same cohomology class.
%	Conversely,  if $\eta$ and $\eta'$ represent the same cohomology class, they differ by $\delta^1 f$, for some $1$-cochain $f$. Define $\phi$ to be $\phi(x,a) = (x, a + f(x))$. Using the same equations as above we see that $\phi$ is a morphism of heaps. The fact that it is an isomorphism of extensions comes from the fact that it is easily seen to be bijective and obviously $\pi_1(x,a) = x =\pi_1 \phi (x,a)$. 
\end{proof}

\begin{lemma}\label{lem:type0eqn}
Let $\eta_0$ be  a heap $2$-cocycle of type 0 that satisfies the degeneracy condition. Then  the following equality holds
$$\eta_0 (x_1, x_2, x_3) +\eta_0 ( [x_1, x_2, x_3], x_3, x_2)=0. $$
\end{lemma}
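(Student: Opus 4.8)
The plan is to derive the identity by a single well-chosen substitution into the type 0 cocycle equation $\delta^2_{(0)}\eta_0 = 0$, namely setting $x_4 = x_3$ and $x_5 = x_2$. Writing out $\delta^2_{(0)}\eta_0(x_1,x_2,x_3,x_3,x_2) = 0$ gives
$$
\eta_0(x_1,x_2,x_3) + \eta_0([x_1,x_2,x_3],x_3,x_2) - \eta_0(x_3,x_3,x_2) - \eta_0(x_1,x_2,[x_3,x_3,x_2]) = 0.
$$
The last two terms vanish: $\eta_0(x_3,x_3,x_2) = 0$ directly by the degeneracy condition on $\eta_0$, while for the fourth term I would first use the heap degeneracy condition $[x_3,x_3,x_2] = x_2$ to rewrite it as $\eta_0(x_1,x_2,x_2)$, which is then $0$ again by the degeneracy condition on $\eta_0$. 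What remains is exactly the asserted equality $\eta_0(x_1,x_2,x_3) + \eta_0([x_1,x_2,x_3],x_3,x_2) = 0$.

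There is essentially no obstacle here; the only thing to be careful about is keeping the two distinct ``degeneracy'' notions straight — the degeneracy condition on the heap operation ($[x,x,y]=y$, used to simplify the argument $[x_3,x_3,x_2]$) versus the degeneracy condition on the cochain $\eta_0$ ($\eta_0(x,x,y)=0=\eta_0(x,y,y)$, used to kill the two unwanted summands). Both hypotheses are available: $X$ is a heap, and $\eta_0$ is assumed to satisfy the cochain degeneracy condition. One could also record that the same substitution with $x_4 = x_3$, $x_5 = x_1$ (or other choices) yields further relations, but for this statement the single substitution above suffices.
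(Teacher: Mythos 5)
Your proof is correct and is exactly the paper's argument: both substitute $(x_4,x_5)=(x_3,x_2)$ into $\delta^2_{(0)}\eta_0=0$ and then kill the last two terms using the cochain degeneracy condition together with the heap axiom $[x_3,x_3,x_2]=x_2$. Your explicit remark distinguishing the two degeneracy notions is a helpful clarification but does not change the substance.
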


\begin{proof} The $2$-cocycle condition applied to $\eta_0(x_1,x_2,x_3,x_3,x_2)$ becomes
\begin{eqnarray*}
\lefteqn{\delta^2_{(0)}\eta_0(x_1,x_2,x_3,x_3,x_2)}\\
&=& \eta_0(x_1,x_2,x_3) + \eta_0([x_1,x_2,x_3],x_3,x_2)\\
&&- \eta_0(x_3,x_3,x_2) - \eta_0 (x_1,x_2,[x_3,x_3,x_2]).
\end{eqnarray*}
By applying the degeneracy condition and the degeneracy heap axiom, we obtain the result.
\end{proof}

%%% The following may be unused.%%%%%%% Or check that this is type 0 cocycle of type 0 homology below. 
%\begin{definition}
%{\rm
%Let $(\zeta_1, \zeta_2) $ be a pair of heap $3$-cochains.
%Define a heap $3$-cochain of type $0$ corresponding to $(\zeta_1, \zeta_2)$ by 
%$$\zeta_0 (x_1, x_2, x_3, x_4, x_5)=
%\zeta_1 (x_1, x_2, x_3, x_4, x_5) - \zeta_2 (x_1, x_4, x_3, x_2, x_5 ) . $$
%}
%\end{definition}

%% added
Towards extending cohomology theory to general dimensions, we propose the following 3-differentials.

\begin{definition}
{\rm
Let $X$ be a set with para-associative operation $[ - ]$ and let $A$ be an abelian group.
Let $C^4_{\rm PA(i)} (X,A)$ be three isomorphic copies of the abelian group of  functions $\{ f: X^{7} \rightarrow A\}$  for $i=1,2,3$.
Let $C^4_{\rm PA} (X,A)=\oplus_{i=1,2,3} C^4_{\rm PA(i)} (X,A) $.
For $(\zeta_1, \zeta_2) \in C^3_{\rm PA} (X,A)=C^3_{\rm PA(1)} (X,A) \oplus C^3_{\rm PA(2)} (X,A)$, 
define $\delta^3_{(i)}: C^3_{\rm PA} (X,A) \rightarrow C^4_{\rm PA(i)} (X,A) $, for $i=1,2,3$, as follows.
\begin{eqnarray*}
 \lefteqn{ \delta^3_{(1)} (\zeta_1, \zeta_2)   (  x_1, x_2, x_3, x_4, x_5, x_6 , x_7 ) }\\
&=&    \zeta_1( [x_1, x_2, x_3], x_4, x_5, x_6, x_7) 
+ \zeta_1 ( x_1, x_2, x_3, [x_6, x_5, x_4], x_7 ) \\
& & -  \zeta_1 (  x_6, x_5, x_4, x_3, x_2) %%%%% The sign of this term was changed.
-  \zeta_1 ( x_1, x_2, x_3, x_4, x_5) \\
& & + \zeta_2 ( x_1, x_2, x_3, x_4, x_5) 
- \zeta_1 (x_1, x_2, [x_3, x_4, x_5] , x_6 , x_7)
 ,\\
\lefteqn{  \delta^3_{(2)} (\zeta_1, \zeta_2)  (  x_1, x_2, x_3, x_4, x_5, x_6 , x_7 )}\\
&= &
 \zeta_2 (x_1,x_2, x_3, x_4,  [x_5, x_6, x_7] ) 
+ \zeta_2 (x_1,  [x_4, x_3, x_2 ], x_5, x_6, x_7)  \\
& &
- \zeta_2 ( x_6, x_5, x_4, x_3, x_2)  %%%%% The sign of this term was changed.
-  \zeta_2 (x_3, x_4, x_5,  x_6, x_7 ) \\
 & &   
 + \zeta_1 ( x_3, x_4, x_5, x_6, x_7 )
  - \zeta_2 ( x_1, x_2, [x_3,  x_4, x_5],  x_6, x_7)  , 
  \\
  \lefteqn{  \delta^3_{(3)} (\zeta_1, \zeta_2) (  x_1, x_2, x_3, x_4, x_5, x_6 , x_7 ) } \\
&= &  
 \zeta_2 ( [x_1, x_2, x_3], x_4, x_5, x_6, x_7) 
+  \zeta_1 ( x_1, x_2, x_3, [x_6, x_5, x_4], x_7) \\
& & 
 +  \zeta_1 ( x_6, x_5, x_4,  x_3, x_2 ) 
 -\zeta_1 ( x_1, x_2, x_3, x_4, [ x_5, x_6, x_7] )  \\
& &  - \zeta_2 (x_1,  [ x_4, x_3, x_2] , x_5, x_6 ,    x_7 )   
- \zeta_2 ( x_6,  x_5, x_4, x_3, x_2)  .
\end{eqnarray*}
Then define $\delta^3:=\oplus_{i=1,2,3} \delta^3_{(i)}: C^3_{\rm PA} (X,A) \rightarrow C^4_{\rm PA} (X,A)$.

}
\end{definition}

\begin{figure}[htb]
\begin{center}
\includegraphics[width=2.5in]{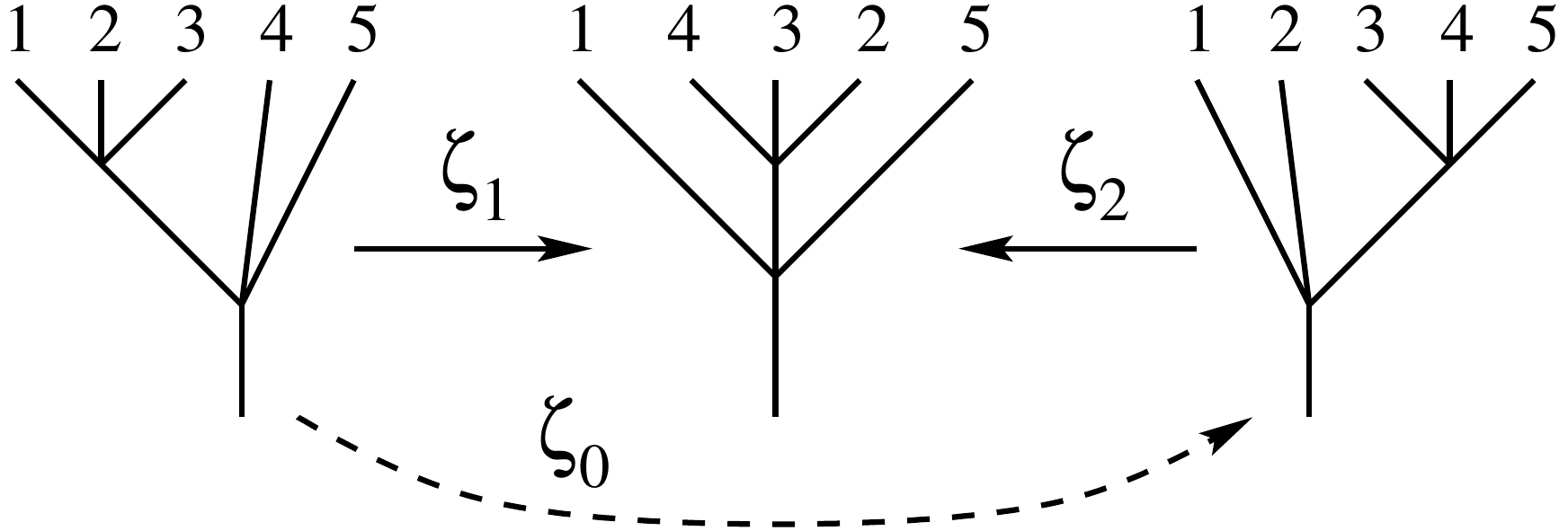}
\end{center}
\caption{Heap 3-cocycle notations}
\label{2cocy}
\end{figure}

Let $X$ be a heap, and let $x_i \in X$ for $i=1, \ldots, 5$. 
We utilize the following diagrammatic representations of heap 3-cocycles in Theorem~\ref{thm:3cocy}.
In Figure~\ref{2cocy}, 3-cocycles are associated to changes of diagrams. 
The three tree diagrams with top vertices labeled represent the elements in the equality 
$$[ [ x_1, x_2, x_3], x_4, x_5 ] = [  x_1,[ x_4, x_3, x_2] , x_5 ] = [  x_1, x_2, [x_3, x_4, x_5 ] ], $$
from left to right, respectively.
The 3-cocycle $\zeta_1(   x_1, x_2, x_3, x_4, x_5 )$
(resp. $\zeta_2(   x_1, x_2, x_3, x_4, x_5 )$)  is associated to  the change from left to middle
(resp. right to middle)  tree diagrams as depicted by the solid arrows. The 3-cocycle
$\zeta_0 (  x_1, x_2, x_3, x_4, x_5 )$ is associated to the change from left to right, and depicted by the dotted arrow.

\begin{figure}[htb]
\begin{center}
\includegraphics[width=5in]{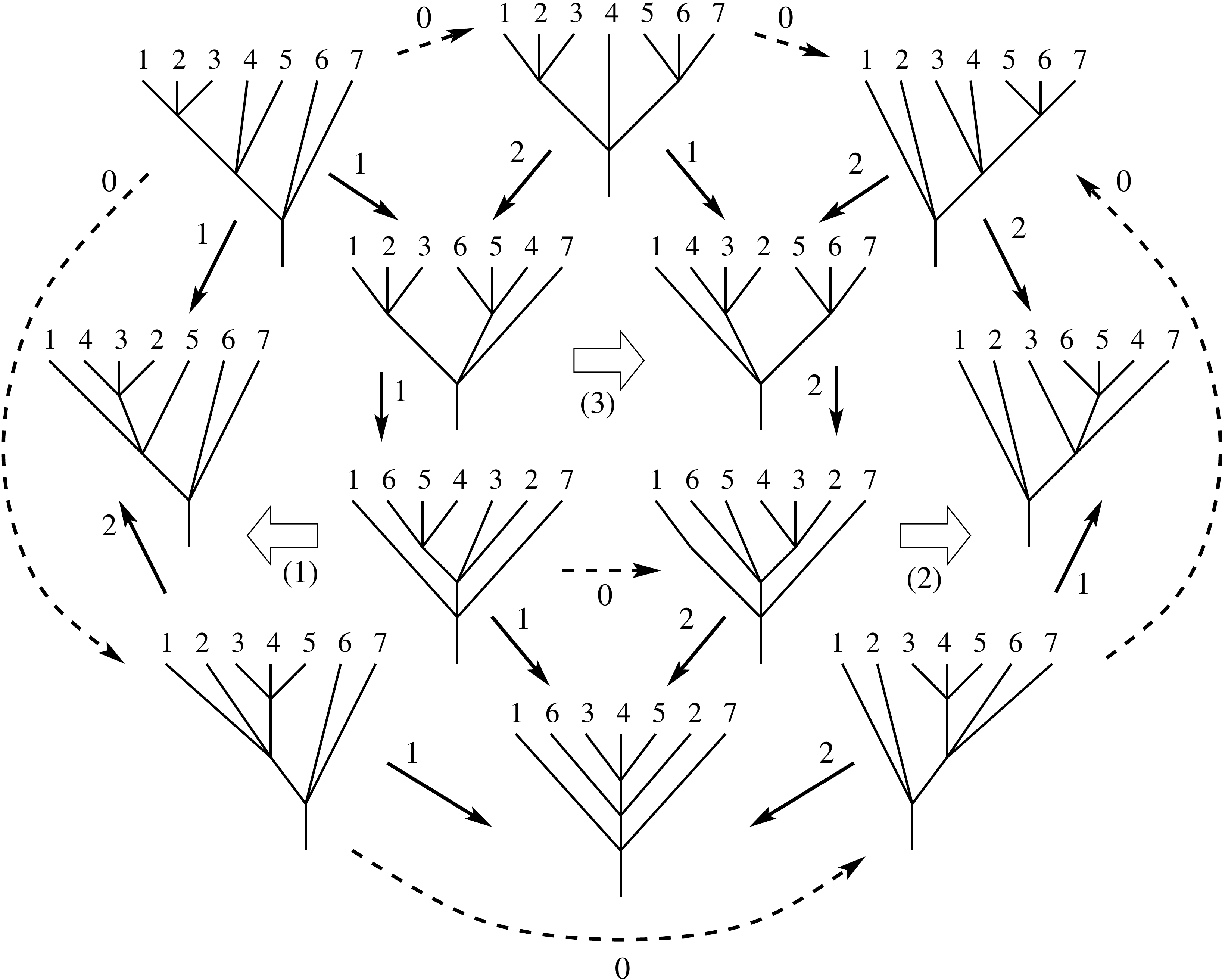}
\end{center}
\caption{Heap 3-cocycle conditions}
\label{3cocy}
\end{figure}

In Figure~\ref{3cocy}, the 3-cocycle conditions are represented by diagrams with 7 elements. 
In the figure, labeled arrows represent 3-cocycles as described above. In the middle, there is a hexagon 
formed by labeled arrows, and has double arrow labeled by $(3)$.
%$(0)$. 
This hexagon represents the differential $\delta^3_{(3)}$. 
The definition of the differentials, as well as the  proof of Theorem~\ref{thm:3cocy}, are aided by this figure.

\begin{theorem} \label{thm:3cocy}
The composition $\delta^3 \delta^2$ vanishes. 
\end{theorem}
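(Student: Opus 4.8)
The plan is to verify directly that each component $\delta^4_{(i)}\circ\delta^3$ vanishes, for $i=1,2,3$, by substituting the definition of $\delta^3=\delta^3_{(1)}\oplus\delta^3_{(2)}\oplus\delta^3_{(3)}$ into each $\delta^4_{(i)}$ (which the paper will define analogously, as the differential $C^4_{\rm PA}\to C^5_{\rm PA}$). Since the statement at hand is the lower-order identity $\delta^3\delta^2 = 0$, I would instead expand $\delta^3\delta^2\eta$ for an arbitrary $\eta\in C^2_{\rm PA}(X,A)$, write out the three components $\delta^3_{(i)}(\delta^2_{(1)}\eta,\delta^2_{(2)}\eta)$ on seven variables $x_1,\dots,x_7$, and show each is identically zero. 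In each $\delta^3_{(i)}$ one plugs in $\zeta_1 = \delta^2_{(1)}\eta$ and $\zeta_2 = \delta^2_{(2)}\eta$, each of which is a signed sum of four evaluations of $\eta$ at triples obtained from the five arguments via the para-associativity moves; so each $\delta^3_{(i)}\delta^2\eta$ becomes a signed sum of roughly $24$ terms of the form $\pm\eta(\text{bracket expression})$.

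The key step is a bookkeeping cancellation: after expansion, every term $\pm\eta(\cdots)$ must pair off with an equal-and-opposite term. The cancellations are of two kinds. The first kind is purely formal — terms that are literally identical as functions of $(x_1,\dots,x_7)$ and appear with opposite signs (for instance the $\zeta_2(x_6,x_5,x_4,x_3,x_2)$ term is written twice in $\delta^3_{(3)}$ with opposite contributions, and similar duplications occur across the three rows). The second kind requires invoking para-associativity of $[-]$ on $X$: two $\eta$-terms whose arguments are bracket expressions that look different but are provably equal as elements of $X$, e.g. $[[x_1,x_2,x_3],x_4,x_5]=[x_1,[x_4,x_3,x_2],x_5]=[x_1,x_2,[x_3,x_4,x_5]]$ and its ``shifted'' analogues on higher-arity expressions. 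This is exactly the content encoded by the hexagon in Figure~\ref{3cocy}: the six arrows around the hexagon are the six $\eta$-evaluations that survive the formal cancellations, and their alternating signs around the closed hexagon sum to zero precisely because of the type-0/1/2 para-associativity relations (any two of which imply the third, as noted after the definition). I would organize the computation by tracking, for each of the seven ``slots,'' which bracketings arise, and use Lemma~\ref{lem:d2d1} ($\delta^2_{(i)}\delta^1=0$) as a structural sanity check rather than a logical input.

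The main obstacle is not conceptual but combinatorial: keeping the signs and the nesting of brackets straight across three components and two dozen terms per component, and correctly identifying which pairs cancel formally versus which require a para-associativity rewrite. A clean way to manage this is to introduce shorthand for the recurring bracket expressions (say $p=[x_1,x_2,x_3]$, $q=[x_4,x_3,x_2]$, $r=[x_3,x_4,x_5]$, and their higher analogues $[x_6,x_5,x_4]$ etc.), expand symbolically, and then appeal to para-associativity to reduce all arguments to a common normal form; once in normal form the vanishing is immediate term-by-term. Figure~\ref{3cocy} should be cited as the diagrammatic certificate that the surviving terms close up into the hexagon, which is the geometric reason the identity holds.
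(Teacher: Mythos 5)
Your proposal takes essentially the same route as the paper: the paper likewise substitutes $\zeta_i=\delta^2_{(i)}\eta$ into each $\delta^3_{(j)}$, expands everything into $\eta$-evaluations, and cancels the resulting terms pairwise, partly formally and partly via the para-associativity rewrites $[[x_1,x_2,x_3],x_4,x_5]=[x_1,[x_4,x_3,x_2],x_5]=[x_1,x_2,[x_3,x_4,x_5]]$, carrying out the case $j=3$ explicitly and noting that $j=1,2$ are similar. One small slip worth fixing: in $\delta^3_{(3)}$ the term on the argument $(x_6,x_5,x_4,x_3,x_2)$ is not $\zeta_2$ written twice with opposite signs but the pair $+\zeta_1-\zeta_2$ on that argument, whose difference is $\delta^2_{(0)}\eta$ and which therefore only partially cancels at the formal stage.
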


\begin{proof}
This follows by proving, for $\eta \in C^2_{\rm PA}(X, A)$ and $\zeta_i=\delta^2_{(i)} \eta $ for $i=1,2$,
that $\delta^3_{(j)} (\zeta_1, \zeta_2)=0$ for $j=1,2,3$. 
For $ \delta^3_{(3)} (\zeta_1, \zeta_2) =0$, first we compute positive terms:
\begin{eqnarray*}
\lefteqn{   \zeta_2 ( [x_1, x_2, x_3], x_4, x_5, x_6, x_7) } \\
 \lefteqn{   +  \zeta_1 ( x_1, x_2, x_3, [x_6, x_5, x_4], x_7) 
 +  \zeta_1 ( x_6, x_5, x_4,  x_3, x_2 ) } \\
 &=&
\{  \eta (  x_5, x_6, x_7 ) +\eta( [x_1, x_2, x_3], x_4,[ x_5, x_6, x_7])   \\
 & &
 - \underline{ \eta ( x_6, x_5, x_4 ) } -  \underline{  \eta( [x_1, x_2, x_3], [ x_6, x_5, x_4], x_7 ) }  \} \\
 & & 
  + \{ \eta (  x_1, x_2, x_3 ) +  \underline{ \eta( [ x_1, x_2, x_3],  [ x_6, x_5, x_4],  x_7 )}  \\ 
& &
 - \underline{ \eta (  [x_6, x_5, x_4], x_3, x_2 ) } -  \eta ( x_1, [  [x_6, x_5, x_4],    x_3,x_2] , x_7 )   \} \\ 
 & & 
  + \{ \underline{  \eta (  x_6 ,  x_5, x_4 ) } + \underline{\eta (  [x_6, x_5, x_4], x_3, x_2 )} \\
& & -  \eta( x_3, x_4, x_5 )  - \eta( x_6, [ x_3, x_4, x_5 ], x_2)  \} 
\end{eqnarray*}
where canceling terms are underlined.
For the remaining terms, one computes 
\begin{eqnarray*}
\lefteqn{  \zeta_1 ( x_1, x_2, x_3, x_4, [ x_5, x_6, x_7] )  }\\
\lefteqn{   + \zeta_2 (x_1,  [ x_4, x_3, x_2] , x_5, x_6 ,    x_7 )   
+ \zeta_2 ( x_6,  x_5, x_4, x_3, x_2) } \\
&=& \{ \eta ( x_1, x_2, x_3 ) +   \eta (  [  x_1, x_2, x_3 ], x_4, [ x_5, x_6, x_7] )    \\
& & 
-  \underline{ \eta ( x_4, x_3, x_2 ) } -  \underline{ \eta ( x_1, [   x_4, x_3, x_2  ] ,  [ x_5, x_6, x_7]  ) }  \} \\
& & 
+ \{ \eta ( x_5, x_6, x_7 ) +   \underline{ \eta ( x_1, [x_4, x_3, x_2],   [ x_5, x_6, x_7 ]  ) }  \\
& & 
-   \underline{ \eta ( x_6, x_5, [  x_4, x_3, x_2]  )}  -   \eta (x_1, [ x_6, x_5, [  x_4, x_3, x_2] ] , x_7 )   \} \\
& & 
+ \{  \underline{ \eta ( x_4, x_3, x_2 ) } +   \underline{ \eta ( x_6, x_5, [  x_4, x_3, x_2]  ) } \\
& & 
- \eta( x_3, x_4, x_5 )   - \eta ( x_6, [x_3, x_4, x_5 ], x_2 ) \} 
 \end{eqnarray*}
and all  terms cancel.

The conditions  $ \delta^3_{(1)} (\zeta_1, \zeta_2) =0$ and
 $ \delta^3_{(2)} (\zeta_1, \zeta_2) =0$ follow similarly from direct computations.
\end{proof}

For a type $0$ condition, a chain complex is defined in a similar manner to the group homology as follows.

\begin{definition}\label{0cocy}
{\rm 
Let $X$ be a set with a type $0$ para-associative ternary operation $[ - ]$.
The $n$-th (type $0$) para-associative (PA)  chain group, 
denoted by $C^{\rm PA}_{n}(X) $, is defined to be the free abelian group on tuples
$(x_1, \ldots, x_{2n-1})$, $x_i \in X$, and the boundary map 
$\partial_{n}^{(0)}: C^{\rm PA}_{n}(X) \rightarrow C^{\rm PA}_{n-1}(X)$ is defined by 
\begin{eqnarray*}
\lefteqn{ \partial_n^{(0)} (x_1, \ldots, x_{2n-1} ) } \\
 &=& -(x_3,\ldots , x_{2n-1}) \\
 && + \sum_{i=1}^n (-1)^{i+1} (x_1, \ldots, x_{2i-2}, [x_{2i-1}, x_{2i}, x_{2i+1} ], x_{2i+2}, \ldots, x_{2n-1} )\\
 && + %sign
   (-1)^{n+1} (x_1, \ldots , x_{2n-3}) %%% sign  (-1)^{n+1} added,  Check
 \end{eqnarray*}
for $n\geq 2$ and $\partial_2^{(0)} (x_1, x_2, x_3)=([x_1, x_2, x_3])-(x_1)+(x_2)-(x_3)$. % changed partial_1 to 2 
}
\end{definition}

It is straightforward to verify that the boundary maps defined above do indeed satisfy the differential condition and define therefore a chain complex.
The dual cochain groups  with coefficient group $A$  and their dual differential maps 
coincide with those in Definitions~\ref{def:PA1cocy} and \ref{def:PA2} for (type $0$) cochain maps. 

\begin{definition}\label{def:type0}
	{\rm
	The homology of the chain complex introduced in Definition \ref{0cocy} is called {\it type 0} para-associative (PA) homology, and written $H^{(0)}_n(X)$. 
}
\end{definition}

%%%% The following is too standard to state as a lemma, so just mention the fact.

%We 
%state the following standard argument to clarify its application in what follows.
%\begin{lemma}\label{lem:cycocy}
%	Suppose $\phi$ is a PA (or heap) $2$-cocycle representing the zero class in the PA (or heap) second cohomology group. Then, if $\alpha$ is a heap $2$-cycle as in Definition \ref{def:type0} we have $\phi(\alpha) = 0$.
%\end{lemma}	
%\begin{proof}
%	Since $[\phi] = 0$ there exists $f\in C^1(X)$ such that $\delta^1f = \phi $ and, since $\delta^1$ is dual to the heap homology chain map $\partial_2^{(0)}$, we have the %chain of 
%	equalities
%	$$
%	\phi(\alpha) = \delta^1f(\alpha) = f(\partial_2^{(0)}  \alpha) = f(0) = 0
%	$$
%as desired. \end{proof}

%%% add:

We note that $\partial_2^{(0)}$ defined above  is dual to $\delta^1$ in 
Definition~\ref{def:PA1cocy}. Therefore if $\phi$ is a 2-coboundary and 
$\alpha$ is a 2-cycle, then 
$$
	\phi(\alpha) = \delta^1f(\alpha) = f(\partial_2^{(0)}  \alpha) = f(0) = 0.
$$
Hence the standard argument applies that if $\phi(\alpha) \neq 0$ for  a 2-cycle $\alpha$
then $\phi$ is not nullcohomologous.

\begin{example}\label{ex:nontheap}
	{\rm
	Consider  $0 \rightarrow \Z_3 \stackrel{\iota}{\rightarrow}  \Z_9  \stackrel{\pi }{\rightarrow} \Z_3 \rightarrow 0 $ as in Example~\ref{ex:heapextmodn} and the corresponding $\eta$. The $2$-chain $\alpha:= (1,0,2) + (0,1,0) + (1,2,0)$ is easily seen to be a heap $2$-cycle and $\eta(\alpha) =1 \neq 0$. %Lemma~\ref{lem:cycocy} implies that 
	Hence $\eta$ is non-trivial. % not nullcohomologous to the trivial cochain. 
	Therefore, $H^2_{\rm PA}(\Z_3,\Z_3) \neq 0$.
}
\end{example}

\section{From Heap Cocycles to Group Cocycles and Back}\label{sec:gpheap}

%Denote the group chain complex by $(G, \partial)$ for a group $G$.We consider the trivial action case. 
The main purpose of this section is to elucidate connections between group (co)homology and heap (co)homology.
For a group $G$, let denote the group chain complex by $(G, \partial)$.   We consider the trivial action case.

\subsection{Group Homology and type 0 Heap Homology}
In this section we provide an explicit relation between type 0 heap homology and group homology. % added

%To illustrate the general case, we start with the following computation.
%We recall that the group $2$-cocycle condition with trivial action on the coefficient group is formulated as 
%$$
%\theta  ( x, y ) + \theta(xy , z) =  \theta (y , z) + \theta (x, y  z) .
%$$
%
%\begin{lemma}\label{2cocyheaptogp}
%Let $X$ be a heap, $e \in X$, and $X$ be endowed with the group structure associated with $e$.
%Let  $\eta$ be a heap $2$-cocycle of type $0$.
%Then $\theta (x, y) := \eta (x, e, y) $ is a group $2$-cocycle.
%\end{lemma}
%
%\begin{proof}
%One computes 
%\begin{eqnarray*}
%\lefteqn{  \theta ( x, y ) + \theta(xy , z) }\\
%&=&   \eta( x, e, y ) + \eta( [ x, e, y ] , e, z) \\
%&=&   \eta( y ,  e, z) + \eta(  x, e, [ y  , e, z ]  ) \\
%&=& \theta (y , z) + \theta (x, y  z) 
%\end{eqnarray*}
%as desired.
%\end{proof}
%
%
%\begin{remark}\label{rem:HtoG}
%{\rm
%This construction of 2-cocycles  corresponds to the construction of a group from a heap through extensions as follows.
%Let $X$ be a heap, $A$ an abelian group, $E=X \times A$ the heap extension defined in Lemma~\ref{def:heapext}
%with a heap 2-cocycle $\eta$. 
%Let $(e, c) \in E$ be a fixed element.
%Then the group structure on $E$ defined from the heap structure on $E$ is computed as
%\begin{eqnarray*}
%\lefteqn{ (x, a) \cdot (y,b) }\\
%&=& 
%[ (x, a), (e, c) , (y, b) ] 
%\\
%&=& 
%( \ [x,e,y], a-c+b + \eta( x, e, y) \ ) \\
%&=& 
%( x y, a+b + \theta (x, y) )
%\end{eqnarray*}
%giving rise to the relation $\theta(x,y)=\eta(x,e,y)-c$, 
%a difference of a constant comparing to Lemma~\ref{2cocyheaptogp}. 
%
%}
%\end{remark}

%The general case is formulated as follows.

\begin{proposition}\label{2cocyheaptogp}
Let $X$ be a heap, $e \in X$, and $G$ be  the associated group, so that $xy=[x,e,y]$ for all $x, y \in X$.
Let $\Psi_n: C^G_n \rightarrow C^{\rm (0)}_{n}(X) $ be the map on chain groups defined by 
$$\Psi_n (x_1,\ldots ,x_n) = (x_1,e,x_2, e, \ldots ,e,x_n) . $$
Then $\Psi_{\bullet}$ is a chain map and therefore induces a well defined map $$\bar{\Psi}_n: H^{(0)}_{n}(X) \rightarrow H^G_{n}(X).$$
\end{proposition}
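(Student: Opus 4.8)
The plan is to prove the two assertions in turn: first that $\Psi_\bullet$ commutes with the differentials, i.e.\ $\partial^{(0)}_n\circ\Psi_n=\Psi_{n-1}\circ\partial^G_n$ for all $n\ge 2$, and then to deduce the induced map on homology, which is just the functoriality of $H_\bullet$ and costs nothing once the chain-map identity is known. So essentially all the work is in that identity, and I would establish it by a direct term-by-term comparison of the two boundary formulas, using nothing beyond the description of $\partial^{(0)}_n$ in Definition~\ref{0cocy} and the defining relation $xy=[x,e,y]$ of the group associated to $(X,e)$.

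To set this up I would write $\Psi_n(x_1,\dots,x_n)=(y_1,\dots,y_{2n-1})$ with $y_{2k-1}=x_k$ and $y_{2k}=e$, so that the arguments occupy the odd slots and the distinguished element $e$ fills every even slot. The key observation is then that the ``internal'' faces of $\partial^{(0)}_n$, which contract a bracket $[y_{2i-1},y_{2i},y_{2i+1}]$, act on the image of $\Psi_n$ by replacing two consecutive arguments $x_i,x_{i+1}$ by $[x_i,e,x_{i+1}]$, and by the relation $xy=[x,e,y]$ this bracket is precisely the product $x_ix_{i+1}$. Since after the contraction the remaining $(2n-3)$-tuple again has its arguments at odd slots and $e$ at every even slot, it is $\Psi_{n-1}(x_1,\dots,x_ix_{i+1},\dots,x_n)$; hence the $i$-th internal face of $\partial^{(0)}_n\Psi_n$ matches $\Psi_{n-1}$ applied to the $i$-th ``multiply adjacent arguments'' face of the bar differential $\partial^G_n$. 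Similarly the two ``external'' faces of $\partial^{(0)}_n$ — delete $(x_1,e)$ at the front, delete $(e,x_n)$ at the back — reproduce $\Psi_{n-1}$ of the ``drop the first'' and ``drop the last'' faces of $\partial^G_n$. Reading this off term by term yields $\partial^{(0)}_n\Psi_n=\Psi_{n-1}\partial^G_n$.

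The step I expect to be the main obstacle is not conceptual but is the careful bookkeeping at the two ends and for the faces that interact with the inserted copies of $e$: such faces create brackets of the shape $[x,e,e]$ or $[e,e,x]$, which must be collapsed to $x$ via the degeneracy axioms $[x,y,y]=x$ and $[x,x,y]=y$, and they create chains having a coordinate equal to $e$, which have to be shown to cancel. The cleanest device for this is to work with the \emph{normalized} group complex, in which any chain with an entry equal to $e$ is zero, so that the superfluous $e$-terms produced by $\Psi$ vanish outright; one must also fix a sign convention for $\partial^G$ and check that the signs $(-1)^{i+1}$ occurring in $\partial^{(0)}_n$ align with those of the matching faces of $\partial^G_n$, possibly absorbing an overall sign into the convention. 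Once the identity $\partial^{(0)}_n\Psi_n=\Psi_{n-1}\partial^G_n$ is in hand, passing to homology gives the well-defined map $\bar\Psi_n$, and the proposition follows; the whole argument is computational, with no hidden conceptual difficulty.
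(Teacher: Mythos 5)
Your proposal is correct and is essentially the paper's own argument, which is stated there in one line as a direct computation using $[x_i,e,x_{i+1}]=x_i\cdot x_{i+1}$. The only remark worth adding is that your anticipated obstacle --- faces producing brackets of shape $[x,e,e]$ or $[e,e,x]$ and the consequent appeal to the normalized complex --- does not actually arise: every bracket face $[y_{2i-1},y_{2i},y_{2i+1}]$ of $\partial^{(0)}_n$ occupies an odd--even--odd triple of slots, so on the image of $\Psi_n$ it is always exactly $[x_i,e,x_{i+1}]$ and the term-by-term matching is already clean, modulo the sign/convention bookkeeping you correctly flag.
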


\begin{proof}
%Let $\delta_{\rm G}$ and $\delta_{\rm H}$ denote the group and heap differential maps.
%	By  direct investigation we have on the one hand:
%\begin{eqnarray*}
%\lefteqn{ h^{n-1} %%% h^n changed to n-1
%\delta_{\rm H} f (x_1,\ldots , x_n)} \\
%&=& h^{n-1}  (\delta_{\rm H} f )  (x_1,\ldots , x_n) \\  %% added
%&=&   (\delta_{\rm H} f ) ( x_3,e, \ldots , e,x_n ) \\
%		&=& - f(x_2,e, \ldots , e,x_n)\\ 
%		&& + \sum_{i=1}^{n-1} (-1)^{i+1} 
%		f(x_1,e,\ldots , e, x_{i-1}, e, [x_{i}, e , x_{i+1}], e, x_{i+1}, e, \ldots , e, x_n) \\
%		% f(x_1,e,\ldots , [x_{2i-1},x_{2i},x_{2i+1}], x_{2i+2}, \ldots , e, x_n).
%	&&	+ sign (-1)^{n+1}  f(x_1, e, \ldots, e, x_{n-1}) . %%% added with  sign (-1)^{n+1} 
%		\end{eqnarray*}
%	 On the other hand we also have: 
%	\begin{eqnarray*}
%		\lefteqn{ \delta_{\rm G} h^{n} %%% h^{n-1} changed to n
%		f(x_1,\ldots ,x_n) }\\
%		&=&  \delta_{\rm G} (h^n f) (x_1,\ldots ,x_n) \\
%				&=& - f(x_2,e,x_3,e,\ldots , e, x_n)\\
%		&& + \sum_{i=1}^{n-1} (-1)^{i+1} f(x_1, e, \ldots, e, x_ix_{i+1}, e, \ldots , x_n) \\
%		& & +(-1)^{n+1}  f(x_1, e, \ldots, e, x_ix_{i+1}, e, \ldots , x_{n-1})  %%% added
%		\end{eqnarray*}
%%Since, in the first equality, $x_{2i-1} = x_i$ and $x_{2i+1}= x_{i+1}$	for all $i=1, 2, \ldots n$, making use of the definition of product as 
%Since $xy = [x,e,y]$  we see that the two terms coincide. Therefore $h$ is a cochain map.
This is a direct computation, using the fact that $[x_{2i},e,x_{2i+1}] = x_{2i}\cdot x_{2i+1}$ by definition. 
%The type zero heap differentials coincide therefore with the group differentials. 
\end{proof}
%\begin{corollary}
%	The map in Lemma \ref{2cocyheaptogp} induces a well defined map in cohomology.
%\end{corollary}
%\begin{proof}
%	Just observe that the assignment in Lemma \ref{2cocyheaptogp} corresponds precisely to the map $h^2$.
%\end{proof}

\begin{remark}\label{rem:HtoG}
	{\rm
		By dualizing Proposition~\ref{2cocyheaptogp}, we obtain a cochain map between type %zero 
		0 heap cohomology and group cohomology. 
In the specific case of the second cohomology group, we observe that 
Proposition~\ref{2cocyheaptogp} % the previous result 
corresponds to the construction of a group from a heap through extensions as follows.
Let $X$ be a heap, $A$ an abelian group, $E=X \times A$ the heap extension defined in Lemma~\ref{def:heapext}
with a heap 2-cocycle $\eta$. 
Let $(e, c) \in E$ be a fixed element.
Then the group structure on $E$ defined from the heap structure on $E$ is computed as
\begin{eqnarray*}
\lefteqn{ (x, a) \cdot (y,b) }\\
&=& 
[ (x, a), (e, c) , (y, b) ] 
\\
&=& 
( \ [x,e,y], a-c+b + \eta( x, e, y) \ ) \\
&=& 
( x y, a+b + \theta (x, y) )
\end{eqnarray*}
giving rise to the relation $\theta(x,y)=\eta(x,e,y)-c$, 
a difference of a constant comparing to Proposition \ref{2cocyheaptogp}.
}
\end{remark}

%Let us consider now a chain element $(x_1,\ldots  , x_{2n-1})\in C^{(0)}_n(X)$, for some $n\geq 2$, with $x_{2i} = e$ for all $i = 1, 2, \ldots  n-1$. It is easy to see that the type zero heap differential maps  $(x_1,\ldots  , x_{2n-1})$ to a sum of elements of type $(y_1,\ldots  , y_{2n-3})$ with the same property of having $y_{i} = e$, for each $i = 1, 2, \cdot n-2$. We can therefore define a subcomplex $\widehat{C}_{\bullet}^{(0)}(X)$, where $\widehat{C}_n^{(0)}(X)$ is by definition freely generated by $2n-1$ tuples of elements of $X$, with even entries being equal to $e$, for $n\geq2$, and $C_1^{(0)}(X)$, for $n=1$. 
\begin{sloppypar}
Let $X$ be a heap, and $e \in X$.
We define chain subgroups $\widehat{C}_n^{(0)}(X)$ to be the free abelian group generated by 
$$\{ ( (x_1,\ldots  , x_{2n-1})\in C^{(0)}_n(X) \ | \ x_{2i}=e, \ i = 1, 2, \ldots ,  n-1 \} , $$
and $\widehat{C}_1^{(0)}(X)={C}_1^{(0)}(X)$. 
It is checked by direct computation that $\partial^{(0)}_n ( \widehat{C}_n^{(0)}(X) ) \subset \widehat{C}_{n-1}^{(0)}(X)$,
so that $\{ \widehat{C}_n^{(0)}(X) , \partial^{(0)}_n \}$ forms a chain subcomplex.
Let $\widehat{H}_n^{(0)}(X) $ denote the homology groups of this subcomplex, and let 
$\widetilde{H}^{(0)}_n(X)$ denote the relative homology groups for the quotient $ C_n^{(0)}(X)/  \widehat{C}_n^{(0)}(X)$.
We now have the following result.
\end{sloppypar}

\begin{theorem}
	In the same setting as in Proposition~\ref{2cocyheaptogp}, the map $\bar{\Psi}_{n}$ is an injection for all $n$. 
	Furthermore, %Also, 
	there is a long exact sequence of homology groups
	$$
	\cdots  \rightarrow H^G_n(X) \rightarrow H^{(0)}_n(X) \rightarrow \widetilde{H}^{(0)}_n(X) \xrightarrow{\partial} H^G_{n-1}(X) \rightarrow \cdots .
	$$
%	where $\widetilde{H}^{(0)}_n(X)$ is the relative homology of $C^{(0)}_{\bullet}$ with respect to $\widehat{C}^{(0)}_{\bullet}(X)$.
\end{theorem}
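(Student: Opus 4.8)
The plan is to recognize $\Psi_\bullet$ as, up to isomorphism, the inclusion of the subcomplex $\widehat C^{(0)}_\bullet(X)$, and then invoke the long exact sequence of the corresponding short exact sequence of complexes. First I would note that $\Psi_n$ sends the standard generators $(x_1,\dots ,x_n)$ of $C^G_n$ bijectively onto the generators $(x_1,e,x_2,e,\dots ,e,x_n)$ --- those whose even entries are all $e$ --- that freely span $\widehat C^{(0)}_n(X)$. Hence each $\Psi_n$ is an isomorphism of abelian groups onto $\widehat C^{(0)}_n(X)$; since $\widehat C^{(0)}_\bullet(X)$ is a subcomplex and $\Psi_\bullet$ is a chain map by Proposition~\ref{2cocyheaptogp}, $\Psi_\bullet$ is an isomorphism of chain complexes $C^G_\bullet\xrightarrow{\ \sim\ }\widehat C^{(0)}_\bullet(X)$. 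In particular $H^G_n(X)\cong\widehat H^{(0)}_n(X)$, and $\bar\Psi_n\colon H^G_n(X)\to H^{(0)}_n(X)$ is injective if and only if the inclusion-induced map $\widehat H^{(0)}_n(X)\to H^{(0)}_n(X)$ is.

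Next I would apply the homology long exact sequence to
\[
0\longrightarrow\widehat C^{(0)}_\bullet(X)\xrightarrow{\ \iota\ }C^{(0)}_\bullet(X)\longrightarrow C^{(0)}_\bullet(X)/\widehat C^{(0)}_\bullet(X)\longrightarrow 0 .
\]
Since $\widetilde H^{(0)}_n(X)$ is by definition the homology of the quotient complex and $\widehat H^{(0)}_n(X)\cong H^G_n(X)$, this is exactly the asserted long exact sequence, with first map $\bar\Psi_n$.

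It remains to prove that $\iota$ induces an injection on homology in every degree, and I would do this by constructing a chain-map retraction $\rho_\bullet\colon C^{(0)}_\bullet(X)\to\widehat C^{(0)}_\bullet(X)$ with $\rho_\bullet\circ\iota=\mathrm{id}$; any such retraction makes $\widehat C^{(0)}_\bullet(X)$ a chain-complex direct summand of $C^{(0)}_\bullet(X)$, so $\iota_*$ is a split injection. The retraction is built from the group structure on $(X,e)$ --- recall $xy=[x,e,y]$, whence $[x,y,z]=xy^{-1}z$ --- by absorbing each even-indexed entry of a generator into a neighbour and adding corrections by degenerate (hence cycle) tuples of the shape $(\dots ,e,e)$. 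In degree $2$, for example, one checks that
\[
\rho_2(x_1,x_2,x_3)=(x_1x_2^{-1},e,x_3)-(x_1x_2^{-1},e,x_2)+(x_1x_2^{-1},e,e)
\]
lies in $\widehat C^{(0)}_2(X)$, restricts to the identity there, and satisfies $\partial_2^{(0)}\rho_2=\partial_2^{(0)}$ (using $\rho_1=\mathrm{id}$). Once injectivity holds in all degrees, exactness forces the connecting maps $\partial$ to vanish and the long exact sequence to split into short exact sequences $0\to H^G_n(X)\to H^{(0)}_n(X)\to\widetilde H^{(0)}_n(X)\to 0$, consistently with the chain-level splitting.

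The main obstacle is writing down $\rho_n$ in general degree and verifying $\partial_n^{(0)}\rho_n=\rho_{n-1}\partial_n^{(0)}$ against the boundary formula of Definition~\ref{0cocy}: the bare ``absorb neighbours'' map does not commute with $\partial^{(0)}$, as the degree-$2$ case shows, so the correction terms must be chosen uniformly in $n$ to make $\rho_n$ at once a chain map and a strict retraction. Pushing the leading term $-(x_3,\dots ,x_{2n-1})$, the $n$ bracket terms, and the trailing term $(-1)^{n+1}(x_1,\dots ,x_{2n-3})$ through this identity is the computational heart of the proof; alternatively, if a clean closed form for $\rho_n$ proves elusive, one can argue directly that any cycle of $\widehat C^{(0)}_n(X)$ which bounds in $C^{(0)}_n(X)$ already bounds in $\widehat C^{(0)}_n(X)$, which is the same manipulation carried out on a single chosen bounding chain.
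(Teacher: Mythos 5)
Your construction of the long exact sequence is exactly the paper's argument: identify $\Psi_\bullet$ as an isomorphism of $C^G_\bullet$ onto the subcomplex $\widehat C^{(0)}_\bullet(X)$, form the short exact sequence of complexes $0\to\widehat C^{(0)}_\bullet\to C^{(0)}_\bullet\to C^{(0)}_\bullet/\widehat C^{(0)}_\bullet\to 0$, and apply the Snake Lemma. On that part there is nothing to add.

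Where you diverge is on the injectivity of $\bar\Psi_n$, and here you have correctly put your finger on a point the paper glosses over: the paper only records that $H^G_n(X)\cong\widehat H^{(0)}_n(X)$ via the chain isomorphism, which by itself says nothing about whether the inclusion-induced map $\widehat H^{(0)}_n(X)\to H^{(0)}_n(X)$ is injective (inclusions of subcomplexes are not injective on homology in general). Your plan to produce a chain retraction $\rho_\bullet\colon C^{(0)}_\bullet(X)\to\widehat C^{(0)}_\bullet(X)$ splitting the inclusion is the right kind of repair, and your degree-$2$ formula checks out: it fixes $\widehat C^{(0)}_2(X)$ pointwise and satisfies $\partial_2^{(0)}\rho_2=\rho_1\partial_2^{(0)}$ with $\rho_1=\mathrm{id}$. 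However, as you yourself acknowledge, the retraction is never written down for general $n$, and the verification of $\partial_n^{(0)}\rho_n=\rho_{n-1}\partial_n^{(0)}$ against the boundary formula of Definition~\ref{0cocy} is precisely the content of the claim; the fallback suggestion (show directly that a cycle of $\widehat C^{(0)}_n$ bounding in $C^{(0)}_n$ already bounds in $\widehat C^{(0)}_n$) is a restatement of what must be proved, not a proof. So as written your argument establishes the long exact sequence but leaves the injectivity assertion, for $n\geq 3$, unproven. This is a genuine gap in your proposal --- though, to be fair, it is the same step the paper's own two-line proof does not supply, so completing the general-$n$ retraction (or finding another argument for injectivity) would actually strengthen the published proof rather than merely reproduce it.
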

\begin{proof}
	The chain map $\Psi_n$ gives an isomorphism between chain groups $C^G_n(X)$ and $\widehat{C}^{(0)}_n(X)$
	and commute with differentials, giving rise to an isomorphism of chain complexes.
	%Since the type zero and group differentials coincide, on chain elements of $\widehat{C}^{(0)}_n(X)$, it follows that 
	Through the map $\bar{\Psi}_n$,  $H^G_n(X)$ is identified with $\widehat{H}^{(0)}_n(X)$. 
	
	The second statement follows from the short exact sequence of chain complexes
	$$0 \rightarrow \widehat{C}^{(0)}_{\bullet}(X) \rightarrow C^{(0)}_{\bullet}(X) \rightarrow C^{(0)}_{\bullet}(X)/\widehat{C}^{(0)}_{\bullet}(X) \rightarrow 0$$
	using the isomorphism $\Psi_{\bullet}$ and defining $\partial$ as the usual connecting homomorphism, via the Snake Lemma.
\end{proof}

\begin{definition}
	{\rm
		The homology $\widetilde{H}^{(0)}_{\bullet}(X)$ is called the type zero {\it essential} heap homology. 
	}
\end{definition}

\begin{remark}
	{\rm
The essential homology of a group heap $X$ is regarded as a  measure of how far is group homology from being isomorphic to the type zero heap homology.	
}
\end{remark}
\begin{example}
	{\rm
	We show that $\widetilde{H}^{(0)}_{\bullet}(X)$ can be nontrivial. Consider the group heap correspoding to $\Z_2$. The $2$-chain $(0,1,1)$ is easily seen to be a type zero $2$-cycle. 
	%The class $[(0,1,1)]\in \widetilde{H}^{(0)}_2(X)$ can be seen to be nontrivial in the following way. 
	We show that the class $[(0,1,1)]\in \widetilde{H}^{(0)}_2(X)$ is nontrivial.
	The $2$-cochain $\eta(1,1,1) = \eta(1,1,0) = \eta(0,1,1) = 1$, and zero otherwise is a heap $2$-cocycle, as seen in Example~\ref{ex:H2heap}. As previously observed, a heap $2$-cocycle is also a type zero $2$-cocycle.
	 Furthermore, %since the differentials $\partial_1^{PA}$ and 
	 $\partial_1^{(0)}$ %coincide, 
	 is dual to $\delta^1_{\rm PA}$, so that 
	 $\eta$ is nontrivial as a type zero heap cocycle. Suppose that $[(0,1,1)] = 0$ in $\widetilde{H}^{(0)}_2(X)$. Then there is a $3$-chain $\alpha$ such that $\partial^{(0)}_3 \alpha - (0,1,1)\in \widehat{C}^{(0)}_2(X)$. Therefore $\eta(\partial^{(0)}\alpha - (0,1,1)) = 0$, since by definition $\eta$ vanishes %  is null 
	 on $\widehat{C}^{(0)}_2(X)$. Since $\eta(\partial^{(0)} ) = \delta^{(0)} \eta$ and $\eta$ is a type zero $2$-cocycle, we have obtained that $\eta(0,1,1) = 0$, in contradiction with the choice of $\eta$. Therefore $[( 0,1,1) ]$ is nontrivial in $\widetilde{H}^{(0)}_{2}(X)$.
}
\end{example}

\subsection{From Group Cocycles to PA Cocycles}

In this section we present a construction of PA 2-cocycles from group 2-cocycles.
The following gives an answer to a natural question on how the relation between groups and heaps descends to relations in  their homology theories.
It also provides a construction of ternary self-distributive 2-cocycles from group 2-cocycles through 
heap 2-cocycles (Section~\ref{sec:heapTSD}).
We recall that  the group 2-cocycle condition with trivial action on the coefficient group is written as $$\theta(x,y)+\theta(xy,z)=\theta(y,z)+\theta(x,yz)$$
for all $x,y,z \in G$ of a group $G$.
The {\it normalized} 2-cocycle satisfies
$\theta(x,1)=0=\theta(1,x)$, and it follows that normalized 2-cocycles satisfy $\theta(x,x^{-1})=\theta(x^{-1},x)$. 
%The notion of normalized cocycles easily generalizes to $n$-cocycles and they form a sub-complex of the group cochain complex. We indicate the normalized cohomology by the symbol $\widehat{H}^n_G(X)$.
Define the normalized 2-cochain group $\widehat{C}^2_G(X)$ to consist of normalized 2-cochains, and 
the normalized 1-cochain group $\widehat{C}^1_G(X)$ to consist of $f \in C^1_G(X)$ such that $f(1)=0$. 
Then these form a subcomplex up to dimension 2, and the corresponding 2-dimensional cohomology group is denoted by 
 $\widehat{H}^2_G(X)$.

\begin{theorem}\label{th:GtoH}
Let $G$ be a group, and $X$ be the associated heap defined by $[x, y, z]=x y^{-1} z $ for $x,y,z \in G$. 
Let $\theta$ be a normalized group $2$-cocycle with trivial action on the coefficient group $A$.
Then $$ \eta(x, y, z ) := \theta (x, y^{-1} ) + \theta ( xy^{-1}, z) -\theta (y, y^{-1}) $$ is a PA $2$-cocycle. 
This construction $\Phi_2( \theta ) = \eta $ 
defines a cohomology map $\bar{\Phi}_2 : \widehat{H}^2_G(X) \rightarrow H^2_{PA}(X)$.
\end{theorem}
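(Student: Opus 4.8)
The plan is to verify directly that the proposed $\eta$ satisfies both the type 1 and type 2 para-associative $2$-cocycle conditions, i.e. $\delta^2_{(1)}\eta = 0$ and $\delta^2_{(2)}\eta = 0$, using only the (normalized) group $2$-cocycle identity for $\theta$. Since $\delta^2 = \delta^2_{(1)}\oplus\delta^2_{(2)}$ by definition, this will place $\eta$ in $Z^2_{\rm PA}(X,A)$. Throughout I would write the heap operation as $[x,y,z]=xy^{-1}z$ and substitute this into each of the five slots appearing in the formulas for $\delta^2_{(i)}$, then expand every instance of $\eta$ via $\eta(u,v,w) = \theta(u,v^{-1}) + \theta(uv^{-1},w) - \theta(v,v^{-1})$. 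This produces a large alternating sum of terms of the form $\theta(\cdot,\cdot)$; the claim is that after repeatedly applying the cocycle relation $\theta(a,b)+\theta(ab,c) = \theta(b,c)+\theta(a,bc)$ (and the normalization consequence $\theta(g,g^{-1})=\theta(g^{-1},g)$) everything cancels in pairs.

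Concretely, for $\delta^2_{(1)}\eta(x_1,\dots,x_5)$ the four $\eta$-terms are $\eta(x_1,x_2,x_3)$, $\eta(x_1x_2^{-1}x_3,x_4,x_5)$, $\eta(x_4,x_3,x_2)$, and $-\eta(x_1,x_4x_3^{-1}x_2,x_5)$. Expanding, the "$-\theta(v,v^{-1})$" pieces contribute $-\theta(x_2,x_2^{-1})-\theta(x_4,x_4^{-1})-\theta(x_3,x_3^{-1})+\theta(x_4x_3^{-1}x_2,(x_4x_3^{-1}x_2)^{-1})$, which I expect to collapse after noting that $\eta$ restricted to the relevant arguments already encodes the failure of $\theta$ to be a coboundary; the remaining $\theta$-pairs $\theta(\cdot,x_i^{\pm1})$ and $\theta(\cdot,x_j)$ regroup using associativity of the underlying group multiplication together with the cocycle law. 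The same strategy handles $\delta^2_{(2)}\eta$; by the observation in the excerpt that any two of the three para-associativity types imply the third, once both type 1 and type 2 are verified one automatically gets consistency with type 0 as well, though only type 1 and 2 are needed since $Z^2_{\rm PA}$ is cut out by $\delta^2_{(1)}\oplus\delta^2_{(2)}$.

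For the second assertion — that $\Phi_2$ descends to a well-defined map $\bar\Phi_2:\widehat H^2_G(X)\to H^2_{\rm PA}(X)$ — I would check that $\Phi_2$ is $\Z$-linear (immediate from the formula, since each term is linear in $\theta$) and that it carries normalized group $2$-coboundaries into $B^2_{\rm PA}(X,A)={\rm im}(\delta^1)$. If $\theta = \delta^1_G g$ for a normalized $1$-cochain $g$ with $g(1)=0$, i.e. $\theta(x,y)=g(x)-g(xy)+g(y)$, then substituting into the formula for $\eta$ and simplifying (again using $xy^{-1}z$ for the heap operation and telescoping the $g$-terms) should yield exactly $\delta^1 f$ for the heap $1$-cochain $f(x) := g(x)$ (or a close variant thereof, possibly $f(x)=g(x)-\tfrac12\theta$-type correction — this is where the normalization $g(1)=0$ and $\theta(g,g^{-1})=\theta(g^{-1},g)$ get used). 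Matching the $-\theta(y,y^{-1})$ summand correctly against the coboundary formula is the delicate point here.

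The main obstacle will be purely organizational: the expansion of $\delta^2_{(1)}\eta$ and $\delta^2_{(2)}\eta$ produces on the order of a dozen $\theta$-terms each, and arranging the cancellations transparently requires choosing a good bookkeeping scheme (for instance, grouping terms by which variable appears as the "last slot" argument, or by applying the cocycle relation in a fixed order). I expect the type 2 verification to be slightly more involved than type 1 because the heap operation appears in the first slot of an $\eta$, forcing an expansion of $\theta$ at the compound argument $x_3x_4^{-1}x_5$; handling $\theta(x_3x_4^{-1}x_5,\cdot)$ cleanly needs two applications of the cocycle identity rather than one. No conceptual difficulty is anticipated beyond this calculational one — the result is essentially forced once one believes heaps "remember" group cohomology in dimension 2.
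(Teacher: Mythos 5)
Your overall strategy --- expand $\delta^2_{(1)}\eta$ and $\delta^2_{(2)}\eta$ into $\theta$-terms, cancel using the group $2$-cocycle law and normalization, then check that normalized coboundaries land in $B^2_{\rm PA}$ --- is exactly the route the paper takes. However, the central step is not a pairwise cancellation, and your plan omits the one identity that makes it work. After applying the cocycle relation three times to $\eta(x,y,z)+\eta([x,y,z],u,v)$ one is left with
$$\theta(z,u^{-1})+\theta(y^{-1},zu^{-1})+\theta(x,y^{-1}zu^{-1})+\theta(xy^{-1}zu^{-1},v)-\theta(y,y^{-1})-\theta(u,u^{-1}),$$
and the two inverse-pair terms at the end do not cancel against anything; they must be \emph{traded} for $-\theta(z,z^{-1})-\theta(uz^{-1}y,y^{-1}zu^{-1})$ via what the paper isolates as the product-inversion relation
$$\theta(z,u^{-1})+\theta(y^{-1},zu^{-1})-\theta(y,y^{-1})-\theta(u,u^{-1})=\theta(u,z^{-1})+\theta(uz^{-1},y)-\theta(z,z^{-1})-\theta(uz^{-1}y,y^{-1}zu^{-1}),$$
itself a consequence of the cocycle law applied to the group identity $y^{-1}(zu^{-1})=((uz^{-1})y)^{-1}$ together with normalization. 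This is within reach of your stated toolbox, but it is the crux of the argument and must be isolated and proved; ``everything cancels in pairs'' is not what happens. Note also that the identity one actually establishes is the exchange form $\eta(x_1,x_2,x_3)+\eta([x_1,x_2,x_3],x_4,x_5)=\eta(x_4,x_3,x_2)+\eta(x_1,[x_4,x_3,x_2],x_5)$, which is the condition forced by Lemma~\ref{ext} (type~1 para-associativity of the extension); you should reconcile this with the sign of the third term in the printed formula for $\delta^2_{(1)}$ rather than taking that sign at face value.

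On the second assertion your check is the right one, but the hedging about $f$ is unnecessary and the proposed $\tfrac{1}{2}$-correction is wrong. If $\theta=\delta^1_G g$ with $g(1)=0$, then $\theta(y,y^{-1})=g(y)+g(y^{-1})$ and the three terms of $\eta$ telescope to $g(x)-g(y)+g(z)-g(xy^{-1}z)=-\delta^1 g(x,y,z)$, so $\Phi_2(\delta^1_G g)=\delta^1(-g)$; the paper records this at the cochain level as $\Phi_1=-\mathbbm{1}$. Since $B^2_{\rm PA}(X,A)$ is a subgroup, the sign is harmless for well-definedness. With the product-inversion relation supplied and the type~2 computation carried out analogously, your outline becomes the paper's proof.
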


\begin{proof}
	First we note that for an extension group $2$-cocycle $\theta$, the condition $y^{-1}(zu^{-1}) = ((uz^{-1})y)^{-1}$ implies the following identity
	\begin{eqnarray*}
		\lefteqn{\theta(z,u^{-1}) + \theta(y^{-1},zu^{-1}) - \theta(y,y^{-1}) - \theta(u,u^{-1})}\\
		&=& \theta(u,z^{-1}) + \theta (z,z^{-1}) + \theta(u^{-1},uz^{-1}) - \theta(y,y^{-1})\\
		&=& \theta (u,z^{-1}) + \theta (uz^{-1}, y) - \theta (z,z^{-1}) - \theta (uz^{-1}y,y^{-1}zu^{-1}),
	\end{eqnarray*}
which we call the {\it product-inversion} relation. Observe that the normalization condition has been implicitly used to rewrite the term corresponding to $\theta(y^{-1},y)$. 
For $\delta^2_{(1)} (\eta) =0$, one computes
\begin{eqnarray*}
	\lefteqn{\eta(x,y,z) + \eta([x,y,z],u,v)}\\
	&=& \underline{\theta(x,y^{-1})} + \underline{\theta(xy^{-1},z)} -\theta(y,y^{-1})\\
	&& + \theta (xy^{-1}z,u^{-1}) + \theta (xy^{-1}zu^{-1},v) - \theta(u,u^{-1})\\
	&=& \theta(y^{-1},z) + \underline{\theta(x,y^{-1}z)} - \theta(y,y^{-1}) \\
	&& + \underline{\theta(xy^{-1}z,u^{-1})} -  \theta(u,u^{-1}) + \theta(xy^{-1}zu^{-1},v)\\
	&=& \underline{\theta(y^{-1},z)} + \underline{\theta(y^{-1}z,u^{-1})} + \theta(x,y^{-1}zu^{-1})\\
	&& - \theta(y,y^{-1}) - \theta(u,u^{-1}) + \theta(xy^{-1}zu^{-1},v)\\
	&=& \theta(z,u^{-1}) + \theta(y^{-1},zu^{-1}) + \theta(x,y^{-1}zu^{-1})\\
	&& + \theta(xy^{-1}zu^{-1},v) - \theta(y,y^{-1}) -\theta(u,u^{-1})\\
	&=& \theta(u,z^{-1}) + \theta(uz^{-1},y) - \theta(z,z^{-1}) \\
	&& - \theta(uz^{-1}y,y^{-1}zu^{-1}) + \theta(x,y^{-1}zu^{-1}) + \theta(xy^{-1}zu^{-1},v)\\
	&=& \eta(u,z,y) + \eta(x,[u,z,y],v),
	\end{eqnarray*}
where we have underlined the terms undergoing the group $2$-cocycle relation at each step, and used the product-inverse relation in the penultimate equality. Similar computations show $\delta^2_{(2)} (\eta) =0$.

To complete the proof, consider the maps $\Phi_1 := - \mathbbm{1} : \widehat{C}^1_G(X) \rightarrow C^1_{PA}(X)$, and $\Phi_2: \widehat{C}^2_G(X) \longrightarrow C^2_{PA}(X)$, $\theta \mapsto \eta$, as in the previous part of the proof. It is easy to see that $\delta_{G}^1\Phi_2 = \Phi_1\delta_{PA}^1$, therefore showing that $\bar{\Phi}$ is well defined on cohomology groups.
\end{proof}

\begin{remark}\label{rem:GtoHext}
{\rm
Extensions of groups and heaps, in this case, are related as in  Remark~\ref{rem:HtoG}. The group extension is defined, for a group $G$ and the coefficient abelian group $A$,  by 
$$
(x, a) \cdot (y, b) = (xy, a + b + \theta(x,y) )
$$
for $x, y \in G$ and $a, b \in A$. 
For the heap $E=G \times A$ constructed from the group $E=G \times A$ defined above, 
one computes 
\begin{eqnarray*}
\lefteqn{  [ \ (x,a), (y, b) , (z, c) \ ] } \\
&=& 
(x, a) (y, b)^{-1} (z, c) \\
&=& 
(x, a) (y^{-1}, -b - \theta(y, y^{-1} )) (z,c) \\
&=& 
(x y^{-1} z , a-b+c + \theta(x, y^{-1} ) + \theta(xy^{-1}, z) - \theta(y, y^{-1} ) ) 
\end{eqnarray*}
so that we obtain the correspondence
$$
\eta(x,y,z)=\theta(x, y^{-1} ) + \theta(xy^{-1}, z) - \theta(y, y^{-1} ).
$$
}
\end{remark}

%%% The following commented out for now to avoid lack of focus

%\begin{remark}
%{\rm 
%Let $G$ be a group, and $X$ be the associated heap. 
Let $\theta$ be a group $2$-cocycle satisfying the {\em inverse property}: $\theta (x^{-1} , y^{-1} ) = -  \theta (y, x )$ for all $x, y \in X$. Then $\eta (x, y, z ):= \theta (x, y^{-1} ) + \theta ( xy^{-1}, z) $ is a PA $2$-cocycle.

\section{Ternary Self-Distributive Cohomology with Heap Coefficients}\label{sec:TSD}

In this section we introduce  a cohomology theory of ternary self-distributive operations with abelian group heap coefficients, 
and investigate extension theory by 2-cocycles.
A ternary operation $T$ on a set $X$ is called {\it ternary self-distributive} (TSD for short) if it satisfies
$$ T( T(x,y,z), u,v) = T(T(x,u,v), T(y,u,v), T(z,u,v)) $$
for all $x,y,z, u,v \in X$. 
Such operations have been widely studied (e.g., 
\cite{CEGM,ESZ} and references therein). 
The set $X$ with TSD operation $T$, or the pair $(X, T)$, is also called a {\it ternary shelf}. 
In \cite{CEGM,ESZ} homology theories of ternary shelves are defined and studied.
The theory introduced here
differs in the use of heap structures.

\begin{definition}\label{gen-diff-heap-coeff} 
{\rm
Let $(X, T)$ be a ternary shelf.
	We define the $n^{\rm th}$ chain group of $X$ with heap coefficients in $A$, denoted by $C_n^{\rm SD}(X)$, to be the free abelian group on $(2n-1)$-tuples $X^{2n-1}$.
We introduce differentials $\partial_n: C_n^{\rm SD}(X) \longrightarrow C_{n-1}^{\rm SD}(X)$ by the formula
	\begin{eqnarray*}\label{diff-heap-coeff}
	\lefteqn{\partial_n  (x_1,x_2,\ldots, x_{2n-2}, x_{2n-1})}\\
	&=& \sum_{i=2}^{n}(-1)^{n+i}[  (x_1,\ldots , \widehat{x_{2i}},\widehat{x_{2i+1}},\ldots, x_{2n-1}) \\
	&&-  (T(x_1,x_{2i},x_{2i+1}), \ldots , T(x_{2i-1}, x_{2i}, x_{2i+1}), \widehat{x_{2i}},\widehat{x_{2i+1}},\ldots, x_{2n-1})]\\
	&& + (-1)^{n+1}[(x_1,x_4,\ldots, x_{2n-1})- (x_2,x_4,\ldots , x_{2n-1})\\
	& & + (x_3,x_4,\ldots , x_{2n-1}) - (T(x_1,x_2,x_3),x_4,\ldots , x_{2n-1})],
	\end{eqnarray*}
	where  \ $\widehat{ }$ \ denotes the deletion of that factor.
	}
\end{definition}

\begin{proposition}\label{pro:diff-heap}
	The differential maps in Definition \ref{gen-diff-heap-coeff} satisfy the condition $\partial^2 = 0$. 
	%In particular we recover the result in Proposition \ref{delta2=0}.
	\end{proposition}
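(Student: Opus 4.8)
The plan is to run the standard ``partial face maps square to zero'' argument, first reorganizing $\partial_n$ into an alternating sum of face-type operators. Write a generator as $\mathbf{x}=(x_1,\dots,x_{2n-1})$. For each block index $i$ introduce the \emph{deletion} operator $d_i^0$, which erases the pair of coordinates $x_{2i},x_{2i+1}$, and the \emph{act-then-delete} operator $d_i^1$, which first replaces $x_k$ by $T(x_k,x_{2i},x_{2i+1})$ for $1\le k\le 2i-1$ and then erases $x_{2i},x_{2i+1}$. The final summand of $\partial_n$ involves, besides $d_1^0$ and $d_1^1$, the two extra operators $b^2$ and $b^3$ that erase the pairs $\{x_1,x_3\}$ and $\{x_1,x_2\}$, respectively. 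Since $(-1)^{n+1}$ is the $i=1$ instance of $(-1)^{n+i}$, one can rewrite
$$\partial_n=\sum_i(-1)^{n+i}\bigl(d_i^0-d_i^1\bigr)+(-1)^{n+1}\bigl(b^3-b^2\bigr),$$
and I will call the first summand $\partial_n'$ the ``bulk'' part and the second the ``heap correction''.

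Next I would establish the commutation relations among these operators. For block indices $i<j$ and $\epsilon,\delta\in\{0,1\}$, tracking how the deletion at block $i$ shifts block $j$ down by one yields $d_i^\epsilon d_j^\delta=d_{j-1}^\delta d_i^\epsilon$. The cases $(\epsilon,\delta)\in\{(0,0),(0,1),(1,0)\}$ are pure coordinate relabellings. The case $(\epsilon,\delta)=(1,1)$ is the only one with content: carrying out the block-$j$ action and then the block-$i$ action turns $x_k$ (for $k\le 2i-1$) into $T\bigl(T(x_k,x_{2j},x_{2j+1}),T(x_{2i},x_{2j},x_{2j+1}),T(x_{2i+1},x_{2j},x_{2j+1})\bigr)$, which equals $T\bigl(T(x_k,x_{2i},x_{2i+1}),x_{2j},x_{2j+1}\bigr)$ \emph{precisely by ternary self-distributivity of $T$}, and the latter is exactly what the opposite order produces. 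In the same spirit one records that $b^2$ and $b^3$ slide past the operators $d_j^\epsilon$ with $j\ge 2$ by relabelling alone (no use of $T$, as $b^2,b^3$ only delete), and that $b^2,b^3,d_1^0,d_1^1$ obey the handful of identities making $d_1^0-d_1^1+b^3-b^2$ a boundary; these last identities are where the ternary ``heap'' shape of the coefficient structure is used, and are what forces the four-term (rather than two-term) last summand of $\partial_n$.

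With these relations in hand, the assembly step splits $\partial_{n-1}\partial_n$ into $\partial_{n-1}'\partial_n'$, the two cross terms, and the heap-correction square. The pre-simplicial relation gives $\partial_{n-1}'\partial_n'=0$ by the usual sign bookkeeping: in the expansion, a term with block indices $k<j$ is converted by $d_k^\epsilon d_j^\delta=d_{j-1}^\delta d_k^\epsilon$ into a term with block indices $k,j-1$ taken in the opposite order, and since the two occurrences carry opposite signs they cancel in pairs (the few boundary-of-range cases checking directly). For the cross terms, applying the relabelling relations identifies $(-1)^n(b^3-b^2)\circ\partial_n'$ with a sum that cancels $\partial_{n-1}'\circ(-1)^{n+1}(b^3-b^2)$ term by term, while the block-$1$ summand of $\partial_n'$ annihilates $b^3-b^2$ outright; hence the cross terms cancel. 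Finally $(b^3-b^2)\circ(b^3-b^2)$ vanishes by a direct four-term computation, since those composites collapse pairwise to the same two double deletions. Adding everything up gives $\partial^2=0$.

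The hard part is not any individual identity --- each is a one- or two-line check, and the genuine input, ternary self-distributivity, enters only through the $(1,1)$ case of the commutation relation. The substance is the combinatorics: keeping straight the two flavours $0,1$ of every block operator (with self-distributivity pairing a ``$11$'' composite with another ``$11$'' composite, not with a ``$00$'' one), the block-index shift under deletion, and above all the three extra bottom operators $b^2,b^3,d_1^1$, whose mutual cancellations and cancellations against the block-$2$ term must each be verified. This follows the same template as the ternary self-distributive homology of \cite{ESZ}, and the computation proceeds along those lines.
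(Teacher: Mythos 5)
Your proposal is correct and takes essentially the same approach as the paper: the paper likewise rewrites $\partial_n=(-1)^n\sum_i(-1)^i\partial_n^i$ into face-type operators, reduces $\partial^2=0$ to the pre-simplicial relations $\partial_{n-1}^{j-1}\partial_n^i=\partial_{n-1}^i\partial_n^j$ for $i<j$ (with ternary self-distributivity entering exactly in the double ``act-then-delete'' composite), and checks the $i=1$ cases directly. Your finer splitting of the first face into $d_1^0,d_1^1,b^2,b^3$ is just an explicit version of that last direct check.
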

	\begin{proof}
		We can write the differential in the following form:
		$$
		\partial_n = (-1)^n\sum_{i=1}^{n}(-1)^{i}\partial_n^i,
		$$
		where $\partial_n^i$ is defined, for $i\geq 2$, by the formula 
		\begin{eqnarray*}
		\lefteqn{\partial_n^i (x_1,\ldots , x_{2n-1})} \\
		&=& (x_1,\ldots , \widehat{x_{2i}},\widehat{x_{2i+1}},\ldots, x_{2n-1}) \\
		&&- (T(x_1,x_{2i},x_{2i+1}), \ldots ,  T(x_{2i-1}, x_{2i}, x_{2i+1}), \widehat{x_{2i}},\widehat{x_{2i+1}},\ldots, x_{2n-1}),
			\end{eqnarray*}
		and, for $i = 1$, is defined by the formula
		\begin{eqnarray*}
				\lefteqn{\partial_n^i (x_1,\ldots , x_{2n-1})}\\
				&=& (x_1,x_4,\ldots, x_{2n-1})- (x_2,x_4,\ldots , x_{2n-1})\\
				& & + (x_3,x_4,\ldots , x_{2n-1}) - (T(x_1,x_2,x_3),x_4,\ldots , x_{2n-1}).
			\end{eqnarray*}	
		Now it remains to prove the relations $\partial_{n-1}^{j-1}\partial_n^i = \partial_{n-1}^i\partial_n^{j}$ for $i < j$. The cases with $i \geq 2$ are standard, while
%		the same as in \cite{ESZ}. 
		the remaining cases $1 = i < j$  can be checked directly.
%		We show, as an example, the computations for the case $i=1, j=2$. On the one hand we have
%		\begin{eqnarray*}
%		\lefteqn{\partial_{n-1}^1\partial_n^2  (x_1,\ldots , x_{2n-1})}\\
%		&=& (x_1,x_6,\ldots ,x_{2n-1})- (x_2,x_6, \ldots , x_{2n-1})\\
%		&& + (x_3,x_6, \ldots , x_{2n-1}) - (T(x_1,x_2,x_3), x_6, \ldots , x_{2n-1})\\
%		&& - (T(x_1,x_4,x_5), x_6, \ldots , x_{2n-1}) + (T(x_2,x_4,x_5), x_6, \ldots , x_{2n-1})\\
%		&& \hspace{1 in} - (T(x_3,x_4,x_5), x_6, \ldots , x_{2n-1})\\ 
%		&& +  (T(T(x_1,x_4,x_5),T(x_2,x_4,x_5),T(x_3,x_4,x_5)), x_6, \ldots,  x_{2n-1}).
%		\end{eqnarray*}
%	On the other hand we have
%	\begin{eqnarray*}
%		\lefteqn{\partial_{n-1}^1 \partial_n^1  (x_1,\ldots , x_{2n-1})}\\
%		&=& (x_1,x_6,\ldots ,x_{2n-1}) - (x_4,x_6, \ldots , x_{2n-1})\\
%		&& + (x_5,x_6, \ldots , x_{2n-1}) - (T(x_1,x_4,x_5), x_6, \ldots , x_{2n-1})\\
%		&& - (x_2,x_6,\ldots ,x_{2n-1}) + (x_4,x_6,\ldots ,x_{2n-1})\\
%		&& - (x_5,x_6,\ldots ,x_{2n-1}) + (T(x_2,x_4,x_5), x_6, \ldots , x_{2n-1})\\
%		&& + (x_3,x_6,\ldots ,x_{2n-1}) - (x_4,x_6,\ldots ,x_{2n-1})\\
%		&& + (x_5,x_6,\ldots ,x_{2n-1}) - (T(x_3,x_4,x_5), x_6, \ldots , x_{2n-1})\\
%		&& - (T(x_1,x_2,x_3), x_6, \ldots , x_{2n-1}) + (x_4,x_6,\ldots ,x_{2n-1})\\
%		&& - (x_5,x_6,\ldots ,x_{2n-1}) + (T(T(x_1,x_2,x_3),x_4,x_5), x_6, \ldots , x_{2n-1}).
%		\end{eqnarray*}
%	The two expressions coincide.
	\end{proof}

The cycle, boundary and homology groups and their duals with respect to an abelian group $A$ are defined as usual.
Our focus is on significance and constructions of 2-cocycles in relation to heaps for this theory, 
so that we provide explicit  cocycle conditions in low dimensions below.

\begin{example}\label{tsd-cocy-cond}
{\rm
Let $(X,T)$ be a ternary shelf, and $A$ be an abelian group heap.
Then  cochain groups and differentials dual to Definition~\ref{gen-diff-heap-coeff} 
in low dimensions are formulated as follows.
The cochain groups $C^n_{\rm SD}(X,A)$ are defined to be the abelian groups
of  functions $\{ f: X^{2n-1} \rightarrow A\}$. 
The differentials $\delta^n=\delta^n_{\rm SD}: C^n_{\rm SD}(X,A)\rightarrow C^{n+1}_{\rm SD}(X,A)$
are formulated 
for $n=1,2,3$ as follows.
\begin{eqnarray*}
\lefteqn{  \delta^1 \xi (x_1, x_2, x_3)  } \\
&=&   \xi(x_1) -  \xi(x_2) + \xi(x_3) - \xi (T(x_1, x_2, x_3)) , \\
\lefteqn{ \delta ^2\eta (x_1, x_2, x_3, x_4, x_5) }\\
&=& \eta (x_1, x_2, x_3) + \eta( T(x_1, x_2, x_3),  x_4, x_5 ) \\
& & -  \eta (x_1, x_4, x_5)  +   \eta (x_2, x_4, x_5)-  \eta (x_3, x_4, x_5) \\
& & -   \eta (T(x_1, x_4, x_5), T(x_2, x_4, x_5) , T(x_3, x_4, x_5)) , \\
\lefteqn{ \delta^3 \psi (x_1, x_2, x_3, x_4, x_5, x_6, x_7) }\\
&=& \psi (x_1, x_2, x_3, x_4, x_5)
+ \psi ( T(x_1, x_4, x_5),T( x_2, x_4, x_5), T(x_3, x_4, x_5) , x_6, x_7) \\
& & + \psi (   x_1, x_4, x_5, x_6, x_7) - \psi ( x_2, x_4, x_5,  x_6, x_7  ) +  \psi (   x_3,  x_4, x_5, x_6, x_7) \\
& & - \psi ( T(x_1, x_2, x_3) , x_4, x_5, x_6, x_7)  - \psi(x_1, x_2, x_3, x_6, x_7) \\
& & - \psi( T(x_1,x_6,x_7),  T(x_2,x_6,x_7), T(x_3,x_6,x_7), T(x_4,x_6,x_7), T(x_5,x_6,x_7) ) .
\end{eqnarray*}
The case $n=0$ is defined by convention that  $C^0_{\rm SD}(X,A)=0$.
}
\end{example}

%
%\begin{proposition}\label{delta2=0}
%The composites $\delta^2\delta^1$ and $\delta^3\delta^2$ are zero maps.
%\end{proposition}

%\begin{proof}
%A direct computation with the use of the equality 
%\begin{eqnarray*}
%\lefteqn{ T(T(x_1,x_2,x_3),x_4,x_5)}\\
%& = & T(T(x_1,x_4,x_5), T(x_2,x_4,x_5), T(x_3,x_4,x_5))
%\end{eqnarray*}
% implies that $\delta^2\delta^1=0$. A long computation also shows, making use of ternary self-distributivity, that $\delta^3\delta^2= 0$ as well. We will generalize the previous definition and show the result more generally below. 
%\end{proof}

%Using Definition \ref{tsd-cocy-cond} and Proposition \ref{delta2=0} we introduce the following:
%\begin{definition}
%	{\rm
%		We define the $n$-th cohomology group of $X$  with heap coefficients in $A$, written $H_{\rm SD}^n(X,A)$, to be the quotient
%		$$
%		{\rm ker} (\delta^n)/ {\rm im} (\delta^{n-1}).
%		$$
% 	}
%\end{definition}

We investigate properties of % and 
TSD 2-cocycles. We start with extensions by abelian group heaps.

\begin{definition}\label{tsd-ext}
	{\rm
		Let $(X,T)$ be a ternary self-distributive set, $A$ an abelian group heap and $\eta: X\times X\times X \longrightarrow A$ a $2$-cocycle of $X$ with values in $A$. We define the self-distributive cocycle extension of $X$ with heap coefficients in $A$, by the cocycle $\eta$ to be the cartesian product $X\times A$, endowed with the ternary operation $T'$ given by
		$$
		(x,a)\times (y,b)\times (z,c) \mapsto (T(x,y,z), a-b+c+ \eta(x,y,z)).
		$$	
		In this situation we denote the extension by $X\times_{\eta} A$.
}
\end{definition}

\begin{lemma}
The TSD $2$-cocycle condition gives extension cocycles of TSDs with abelian group heap coefficients.
Specifically, the ternary operation in Definition \ref{tsd-ext}, corresponding to a $2$-cocycle $\eta$ satisfying the second condition $\delta^2\eta=0$ in \ref{tsd-cocy-cond}, is ternary self-distributive.
\end{lemma}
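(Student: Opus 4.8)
The plan is a direct verification that unwinds the definition of the extended operation $T'$ on five generic elements $(x,a),(y,b),(z,c),(u,d),(v,e)$ of $X\times A$ and checks the TSD identity $T'(T'(\cdot,\cdot,\cdot),\cdot,\cdot)=T'(T'(\cdot,\cdot,\cdot),T'(\cdot,\cdot,\cdot),T'(\cdot,\cdot,\cdot))$. First I would expand the left-hand side, $T'(T'((x,a),(y,b),(z,c)),(u,d),(v,e))$, which is a pair whose first coordinate is $T(T(x,y,z),u,v)$ and whose second coordinate is $(a-b+c+\eta(x,y,z))-d+e+\eta(T(x,y,z),u,v)$. Then I would expand the right-hand side $T'(T'((x,a),(u,d),(v,e)),\,T'((y,b),(u,d),(v,e)),\,T'((z,c),(u,d),(v,e)))$, whose first coordinate is $T(T(x,u,v),T(y,u,v),T(z,u,v))$ and whose second coordinate is $(a-d+e+\eta(x,u,v))-(b-d+e+\eta(y,u,v))+(c-d+e+\eta(z,u,v))+\eta(T(x,u,v),T(y,u,v),T(z,u,v))$.

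The first coordinates of the two sides coincide precisely because $T$ is ternary self-distributive. For the second coordinates, the key point is that the ``heap part'' $a-b+c-d+e$ appears identically on both sides and cancels; this is exactly where the abelian heap structure of $A$ is used (the combination $a-b+c$ rather than $a+b+c$), and it is what makes the $d,e$ contributions collapse. After this cancellation, the equality of second coordinates reduces to
\[
\eta(x,y,z)+\eta(T(x,y,z),u,v)=\eta(x,u,v)-\eta(y,u,v)+\eta(z,u,v)+\eta(T(x,u,v),T(y,u,v),T(z,u,v)),
\]
which is verbatim the vanishing of $\delta^2\eta(x,y,z,u,v)$ as written in Example~\ref{tsd-cocy-cond}. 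Hence $T'$ is ternary self-distributive whenever $\delta^2\eta=0$; reading the computation backwards gives the converse as well.

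There is no genuine obstacle here: the content is bookkeeping, and the only thing to watch is keeping the variable orderings and signs in the differential of Example~\ref{tsd-cocy-cond} consistent with the expansion above. I would close by remarking that this computation is precisely the motivation for building the heap structure of the coefficients into the chain complex of Definition~\ref{gen-diff-heap-coeff}: without it the cross terms in $d$ and $e$ would fail to cancel and no cocycle-type condition would emerge.
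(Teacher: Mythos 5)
Your computation is correct: expanding both sides of the TSD identity for $T'$, the heap part $a-b+c-d+e$ cancels, the first coordinates agree by self-distributivity of $T$, and the residual condition on the second coordinates is exactly $\delta^2\eta(x,y,z,u,v)=0$ as written in Example~\ref{tsd-cocy-cond}. The paper omits the proof as a direct verification, and yours is precisely that intended computation, so there is nothing to add.
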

%\begin{proof}
%The left hand side of ternary self-distributivity reads as follows
%\begin{eqnarray*}
%	\lefteqn{T'(T'((x,a),(y,b),(z,c)),(u,d),(v,e))}\\
%	&=& (T(T(x,y,z),u,v), a-b+c+\eta(x,y,z) -d+e+\eta(T(x,y,z),u,v)),
%	\end{eqnarray*}	
%while the right hand side is
%\begin{eqnarray*}
%	\lefteqn{T'(T'((x,a),(u,d),(v,e)),T'((y,b),(u,d),(v,e)),T'((z,c),(u,d),(v,e)))}\\
%	&=&   (T(T(x,u,v),T(y,u,v),T(z,u,v)), a-d+e+ \eta(z,u,v) \\
%	&& -[b-d+e +\eta(y,u,v)]+ c-d+e+\eta(z,u,v)\\
%	&& +\eta(T(x,u,v),T(y,u,v),T(z,u,v))).
%	\end{eqnarray*}
%The two terms are seen to be equal in virtue of ternary self-distributivity of $T$ and the $2$-cocycle condition applied to $\eta$.
%\end{proof}
\begin{definition}
	{\rm
 Given two extensions $X\times_{\eta} A$ and $X\times_{\eta'} A$, we define a morpshim of extensions to be a morphism of  ternary self-distributive sets making a commutative diagram identical to the one in Definition \ref{heapextmor}. An invertible morphism of extensions is called isomorphism.
}
\end{definition}

%\begin{remark}
	Similarly to Definition \ref{heapextmor}, %we have a subdivision in equivalence classes of extensions.
	this defines an equivalence relation and corresponding isomorphism classes. 
%\end{remark}
%Our next goal is to establish a bijective correspondence between second cohomology classes of a ternary self-distributive structure with coefficients in a heap, and heap extensions. 
We have the following result.
\begin{proposition}
	There is a bijective correspondence between $H_{\rm SD}^2(X,A)$ and equivalence classes of extensions.
\end{proposition}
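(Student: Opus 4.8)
The plan is to mimic the classical abelian-extension correspondence for groups, now in the TSD setting with abelian group heap coefficients. First I would set up the two maps of the bijection. Given a $2$-cocycle $\eta \in Z^2_{\rm SD}(X,A)$, associate to it the extension $X \times_\eta A$ of Definition \ref{tsd-ext}; by the preceding Lemma this is a ternary self-distributive set, and the projection $X \times A \to X$ realizes it as an extension. Conversely, given any extension $p : E \to X$ of ternary shelves whose ``fibers'' carry the $A$-torsor structure compatibly (i.e. $E$ is, as a set over $X$, of the form $X \times A$ with the heap acting in the second coordinate), choose a set-theoretic section $s : X \to E$ with $p s = \mathrm{id}$; transporting the operation of $E$ through $s$ and the identification of fibers yields a function $\eta_s : X^3 \to A$ defined by the requirement that $T_E(s(x),s(y),s(z))$ and $s(T(x,y,z))$ differ (in the fiber heap) by $\eta_s(x,y,z)$. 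Expanding the TSD identity for $T_E$ and comparing second coordinates shows $\delta^2 \eta_s = 0$, so $\eta_s \in Z^2_{\rm SD}(X,A)$.

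Next I would check that these two assignments are mutually inverse up to the equivalence relations. Passing from $\eta$ to $X\times_\eta A$ and back with the obvious section $x \mapsto (x,0)$ returns $\eta$ on the nose. For the other direction, one verifies that $E$ equipped with $\eta_s$ is isomorphic as an extension to $X \times_{\eta_s} A$ via $e \mapsto (p(e), (\text{fiber coordinate of } e \text{ relative to } s))$. Then I would show that changing the section $s$ to another section $s'$ changes $\eta_s$ by a coboundary: writing $s'(x)$ and $s(x)$ as differing by $\phi(x) \in A$ for a $1$-cochain $\phi : X \to A$, a direct computation using the heap structure on the fiber gives $\eta_{s'} - \eta_s = \delta^1 \phi$ with $\delta^1$ as in Example \ref{tsd-cocy-cond}. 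Finally, two cocycles $\eta, \eta'$ yield isomorphic extensions if and only if there is a commuting-triangle isomorphism $X \times_\eta A \to X \times_{\eta'} A$; such an isomorphism must have the form $(x,a) \mapsto (x, a + \phi(x))$ for some $\phi : X \to A$ (this is where I use that the map commutes with projection to $X$ and is compatible with the heap action on fibers), and the condition that it intertwines the two ternary operations is exactly $\eta' - \eta = \delta^1 \phi$. Hence the assignment descends to a well-defined bijection between $H^2_{\rm SD}(X,A)$ and isomorphism classes of extensions.

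The main obstacle, and the one point requiring genuine care rather than a routine transcription of the group case, is pinning down the structure of a general extension: the statement only makes sense once ``extension'' is understood so that each fiber is an $A$-torsor (equivalently, the extension is, forgetting $T$, a principal bundle for the abelian heap $A$), matching Definition \ref{tsd-ext}. With that understood, the key subtlety is the bookkeeping of signs and of the heap operation $a - b + c$ (rather than a group sum) when expanding the TSD axiom in the second coordinate; in particular one must confirm that the $\delta^2$ of Example \ref{tsd-cocy-cond} is precisely the obstruction that appears, and that the three ``inner'' applications $T(x_i,u,v)$ produce the combination $-\eta(x_1,u,v) + \eta(x_2,u,v) - \eta(x_3,u,v)$ together with $-\eta(T(x_1,u,v),T(x_2,u,v),T(x_3,u,v))$ seen in the formula. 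Once these coincidences are verified, the remainder is the standard argument, essentially identical to the heap case treated in Proposition \ref{pro:heapext}, so I would state it concisely and refer back there for the parts that are formally the same.
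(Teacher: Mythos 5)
Your proposal is correct and follows exactly the route the paper intends: the paper's own proof is a one-line appeal to ``standard arguments, similar to the group-theoretic case and Proposition~\ref{pro:heapext},'' and your write-up is precisely that standard argument carried out (section $\mapsto$ cocycle, change of section $\mapsto$ coboundary, isomorphisms of extensions $\mapsto$ cohomologous cocycles), with the sign bookkeeping for $a-b+c$ checked correctly against the $\delta^2$ of Example~\ref{tsd-cocy-cond}. The one point you rightly flag --- that an isomorphism over $X$ should be forced into the form $(x,a)\mapsto(x,a+\phi(x))$, which requires fiberwise compatibility with the $A$-action to be either assumed or proved --- is a genuine subtlety that the paper's definitions also leave implicit, so you are not missing anything the paper supplies.
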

\begin{proof}
	Similar to the group-theoretic case and Proposition~\ref{pro:heapext}.
%	The proof is completely analogous to the heap case. We show few details just to elucidate the procedure. For a given element $[\eta]\in H^2_{\rm SD}(X, A)$, define the corresponding class of extensions to be $X\times_{\eta} A$, according to Definition \ref{tsd-ext}. To show that the correspondence is well defined it is enough to show that if $[\eta] = [\eta']$, then $X\times_{\eta} A$ and $X\times_{\eta'} A$ are equivalent. This is easily seen by means of the map $\phi$ defined by the assignment $(x,a) \mapsto (x,a+f(x))$, where $f$ is such that $\eta - \eta' = \delta^1f$. The fact that this map is a morphism of ternary self-distributive sets is equivalent to the $2$-cocycle conditions. In fact on the one hand we have
%	\begin{eqnarray*}
%	\lefteqn{ \phi(T_{\eta}((x,a),(y,b),(z,c))) }\\
%	&=& (T(x,y,z), a-b+c+\eta(x,y,z) + f(T(x,y,z))),
%	\end{eqnarray*}
%	while on the other hand
%	\begin{eqnarray*}
%	\lefteqn{ 
%	T_{\eta'}(\phi(x,a),\phi(y,b),\phi(z,c)) }\\
%	&=&  (T(x,y,z), a+f(x) -b - f(y) +c + f(z) + \eta'(x,y,z) ).
% 	\end{eqnarray*}
% 	Our assertion now is readily seen to be true.
% 	We leave the rest of the details to the reader.
\end{proof}

\begin{example} {\rm 
Let $X=\Z_2$ with the TSD operation 
 $T(x,y,z)=x+y+z \in \Z_2$. 
 This is in fact the abelian heap $\Z_2$ and by  Lemma~\ref{lem:heapTSD} below, the same operation is self-distributive.
	In this example we compute the first cohomology group $H^1_{\rm SD}(X,\mathbb{Z}_2)$ and the second cohomology group $H^2_{\rm SD}(X,\mathbb{Z}_2)$
	with coefficients in the abelian heap $\mathbb{Z}_2$.  For a function $f: X \rightarrow \mathbb{Z}_3$, a straightforward computation gives that 
	$\delta^1(f)=0$. % is the zero map.  
	This gives $H^1_{\rm SD}(X,\mathbb{Z}_2)\cong C^1_{\rm SD}(X,\mathbb{Z}_2)$.  To compute the kernel of $\delta^2$, let us write an element $\phi:X^3 \rightarrow \mathbb{Z}_2$ in terms of characteristic functions as $\phi=\sum_{x,y,z}\phi(x,y,z)\chi_{(x,y,z)}$.  Then $\delta^2 (\phi)=0$ gives the following system of equations in $\mathbb{Z}_2$:
	$$
	\begin{cases}
	\phi(1, 1, 1)+\phi(0, 0, 0)=0\\
	\phi(1, 1, 0)+\phi(0, 0, 1)=0\\
	\phi(1, 0, 1)+\phi(0, 1, 0)=0\\
	\phi(1, 0, 0)+\phi(0, 1, 1)=0
	\end{cases}$$
	implying that ${\rm ker}(\delta^2)$ is 4-dimensional with a basis $\chi_{(1,1,1)}+ \chi_{(0,0,0)},\;$ $\chi_{(1,1,0)}+ \chi_{(0,0,1)},\;$ $ \chi_{(1,0,1)} + \chi_{(0,1,0)},\;$ and $\chi_{(1,0,0)}+ \chi_{(0,1,1)}$.  Since ${\rm im}(\delta^1)=0$  we then obtain that $H^2_{\rm SD}(X,\mathbb{Z}_2)\cong {\mathbb{Z}_2}^{\oplus 4} $.
}
\end{example}

\begin{example}
{\rm
	In this example we compute the first cohomology group $H^1_{\rm SD}(X,\mathbb{Z}_3)$ and the second cohomology group $H^2_{\rm SD}(X,\mathbb{Z}_3)$ for the same $X=\Z_2$ as above,
with coefficients in the abelian heap $\mathbb{Z}_3$.  For a function $f: X \rightarrow \mathbb{Z}_3$, a direct computation gives that 
$\delta^1(f)(1,0,1)=f(0)-f(1)$, $\delta^1(f)(0,1,0)=f(1)-f(0)$ and all other unspecified values of $\delta^1(f)(x,y,z)$ are zeros.  This gives $H^1_{\rm SD}(X,\mathbb{Z}_3)\cong \mathbb{Z}_3$.  
%To compute the kernel of $\delta^2$, let us write an element $\phi:X^3 \rightarrow \mathbb{Z}_3$ in term of characteristic functions as $\phi=\sum_{x,y,z}\phi(x,y,z)\chi_{(x,y,z)}$.  
We continue to use the characteristic function notation. 
Then hand computations %, facilitated by Maple software, 
give that ${\rm ker}(\delta^2)$ is 3-dimensional with a basis $\chi_{(1,1,1)}+ \chi_{(0,0,0)}+ \chi_{(1,0,0)}+ \chi_{(0,1,1)},\;$ $ \chi_{(1,1,0)} + \chi_{(0,0,1)} -\chi_{(0,1,0)}$ and $\chi_{(1,0,1)}- \chi_{(0,1,0)}$.  Since ${\rm im}(\delta^1)$ is generated by $\chi_{(1,0,1)}- \chi_{(0,1,0)}$, we then obtain that $H^2_{\rm SD}(X,\mathbb{Z}_3)\cong \mathbb{Z}_3 \oplus \mathbb{Z}_3$.
}
\end{example}

\begin{example}
{\rm 
	If the TSD set $X$ is trivial, that is $T(x,y,z)=x$ for all $x,y,z \in X$, then the differentials $\delta^1$ and $\delta^2$ take the following simpler  forms:
	\begin{eqnarray*}
	  \delta^1 \xi (x,y,z) &=&   \xi(z) -  \xi(y), \\	
	\delta ^2\eta (x,y,z,u,v) &=& 
	\eta (y,u,v) -  \eta (z,u,v).
	\end{eqnarray*}
This gives, for an abelian group $A$, 
$${\rm im}(\delta^1)=\{\eta:X^3 \rightarrow A, \eta(x,y,z)=\xi(z)-\xi(y), \text{for some map}\; \xi:X \rightarrow A \}.$$
	Thus $H^1_{\rm SD}(X,A)=Z^1_{\rm SD}(X,A)$ is  the group of constant functions, which is isomorphic to $A$.
	The kernel of $\delta^2$ is given by
	$$
{\rm	ker}(\delta^2)=\{\eta:X^3 \rightarrow A, \eta(x,y,z)=\eta(x', y,z), \forall x,x',y,z \in X \}
	$$
	that are functions constant on the first variable.
	Hence $Z^2_{\rm SD}(X,A)$ is isomorphic to  $ A^{X \times X}$,  the group of functions $A^{X \times X}$ from $X \times X $ to $A$. This group has the subgroup
	$B^1_{\rm SD}(X,A)={\rm im}(\delta^1)=\{ \eta(x, y, z)=\xi(z) - \xi(y) \ | \ \xi \in A^X\}$.
%	and the quotient group is  $H^2(X,A)$.

%%% add:
For example, if $X$ is an $n$ element set and $A=\Z_p$ for a prime $p$, then 
$Z^2_{\rm SD}(X,A)\cong \Z_p^{\oplus n^2}$, $B^1_{\rm SD}(X,A)\cong \Z_p^{\oplus n}$ and $H^1_{\rm SD}(X,A)\cong \Z_p^{\oplus n}$.
%%% check
	}
\end{example}

%% added:
The following provides an algebraic meaning of the TSD 3-cocycle condition.

\begin{proposition}
The TSD $3$-cocycle condition gives obstruction cocycles of TSDs for short exact sequences of coefficients.
Specifically, let $X$ be a TSD set and consider a short exact sequence of abelian groups,
$$0 \longrightarrow H \stackrel{\iota}{\longrightarrow} E  \stackrel{\pi}{\longrightarrow}  A\longrightarrow 0 , $$ %maps added
where $E$ is the extension heap corresponding to the $2$-cocycle $\phi\in Z^2(X,A)$, and a section $s: A\longrightarrow E$, such that $s(0)=0$, %%% mapping zero to zero,
 the obstruction for %%%$s$ to be a homomorhism
 $s\phi$ to satisfy the $2$-cocycle condition %%% rewording 
  is a $3$-cocycle with heap coefficients in $H$.
\end{proposition}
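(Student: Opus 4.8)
The plan is to run the classical obstruction-theoretic argument inside the TSD cochain complex of Example~\ref{tsd-cocy-cond}, using that the coefficient groups $H$, $E$, $A$ are simultaneously abelian group heaps and that the differentials are natural in the coefficients. The first step is a preliminary observation: since $\iota$ and $\pi$ are homomorphisms of abelian groups, hence in particular morphisms of the associated heaps, and each $\delta^n_{\rm SD}$ is the $\Z$-linear dual of the chain map $\partial_{n+1}$ of Definition~\ref{gen-diff-heap-coeff}, one has $\iota_*\delta^n=\delta^n\iota_*$ and $\pi_*\delta^n=\delta^n\pi_*$ on cochains. Combined with the existence of the set-theoretic section $s$, which makes $\pi_*\colon C^n_{\rm SD}(X,E)\to C^n_{\rm SD}(X,A)$ surjective, this shows that $C^\bullet_{\rm SD}(X,-)$ carries $0\to H\to E\to A\to 0$ to a short exact sequence of cochain complexes; I would isolate this as a one-line lemma.

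Next I would construct the obstruction cochain. Put $s\phi:=s\circ\phi\colon X^3\to E$, a lift of $\phi$ to $C^2_{\rm SD}(X,E)$ with $\pi_*(s\phi)=\phi$. By naturality, $\pi_*\bigl(\delta^2(s\phi)\bigr)=\delta^2\bigl(\pi_*(s\phi)\bigr)=\delta^2\phi=0$, since $\phi\in Z^2_{\rm SD}(X,A)$. Hence $\delta^2(s\phi)$ takes its values in $\ker\pi=\iota(H)$, and injectivity of $\iota$ produces a unique $\psi\colon X^5\to H$ with $\iota_*\psi=\delta^2(s\phi)$. This $\psi$ is precisely the failure of $s\phi$ to satisfy the $2$-cocycle condition, hence the asserted obstruction cochain.

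It then remains to verify $\psi\in Z^3_{\rm SD}(X,H)$, i.e. $\delta^3\psi=0$. This follows from $\delta^3\delta^2=0$ on $C^\bullet_{\rm SD}(X,E)$ — the dual of the identity $\partial^2=0$ established in Proposition~\ref{pro:diff-heap}, valid with coefficients in any abelian group heap — since $\iota_*(\delta^3\psi)=\delta^3(\iota_*\psi)=\delta^3\bigl(\delta^2(s\phi)\bigr)=0$ and $\iota$ is injective. I would close by noting, as in the group case, that replacing $s$ by another section alters $s\phi$ by an $H$-valued $2$-cochain, so that the class $[\psi]\in H^3_{\rm SD}(X,H)$ is independent of $s$; it is the image of $[\phi]$ under the connecting homomorphism of the long exact sequence attached to $0\to C^\bullet_{\rm SD}(X,H)\to C^\bullet_{\rm SD}(X,E)\to C^\bullet_{\rm SD}(X,A)\to 0$, and it vanishes exactly when $\phi$ lifts to a TSD $2$-cocycle with coefficients in $E$.

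The only point requiring care is the naturality of the differentials with respect to the coefficient heaps: the formulas in Example~\ref{tsd-cocy-cond} are written with the group operations of the coefficient group rather than with the heap bracket, so one must check that the relevant structure maps commute with $\delta^n$. This is automatic here because $\iota$ and $\pi$ carry no affine translation part, being genuine group homomorphisms; for a general morphism of abelian group heaps $g(x)=g_0(x)+t$ one would only get $\delta^n g_*=(g_0)_*\delta^n$, the constant $t$ dropping out because the signed coefficients in each $\delta^n$ sum to zero. Since no translations intervene the argument goes through verbatim, and I expect this bookkeeping — not any substantive computation — to be the main thing to get right.
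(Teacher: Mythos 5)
Your construction of the obstruction cochain is exactly the paper's: the map you call $\psi$ is the paper's $\alpha$, defined by $\iota\alpha=\delta^2(s\phi)$, and your use of $\pi\circ s=\mathrm{id}_A$ together with $\delta^2\phi=0$ to see that $\delta^2(s\phi)$ lands in $\iota(H)$ matches the paper's argument that $\pi\alpha=0$. Where you genuinely diverge is in verifying the $3$-cocycle condition: the paper disposes of this step by asserting ``a direct (though long) calculation,'' whereas you deduce it formally from $\iota_*(\delta^3\psi)=\delta^3(\iota_*\psi)=\delta^3\delta^2(s\phi)=0$ and injectivity of $\iota_*$, with $\delta^3\delta^2=0$ imported from Proposition~\ref{pro:diff-heap} via duality. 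This is the standard connecting-homomorphism argument, and it is sound here: the only hypotheses it needs are that $\iota$ and $\pi$ are group homomorphisms (so that they commute with the $\mathbb{Z}$-linear differentials --- your ``bookkeeping'' remark about translation parts is exactly the right thing to check, and it is vacuous in this case) and that the cochain differentials of Example~\ref{tsd-cocy-cond} are the duals of the boundary maps of Definition~\ref{gen-diff-heap-coeff}, which the paper asserts. What your route buys is the elimination of the long computation and, as a bonus, the independence of $[\psi]$ from the choice of section and its identification with the image of $[\phi]$ under the connecting map of the long exact coefficient sequence --- content the paper does not record. What the paper's direct computation would buy, had it been written out, is a verification that does not lean on the (unproved in print, though asserted) compatibility between the explicit low-dimensional formulas and the general chain complex; if you want your proof to be fully self-contained you should either cite that compatibility explicitly or check the signs in $\delta^2$ and $\delta^3$ against $\partial_3$ and $\partial_4$ once. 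Neither point is a gap in your argument.
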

\begin{proof}
We construct the mapping $\alpha: X^5 \longrightarrow H $ 
by the equality %%% as follows
\begin{eqnarray*}
\lefteqn{ \iota \alpha (x_1,\ldots , x_5)} \\
&=& s\phi (x_1,x_2,x_3) - s\phi(T(x_1,x_4,x_5),T(x_2,x_4,x_5),T(x_3,x_4,x_5))\\
&& + s\phi(T(x_1,x_2,x_3), x_4, x_5 )  % x_4, x_5 ) added
 - s\phi(x_1,x_4,x_5)\\
&& + s\phi(x_2,x_4,x_5) - s\phi(x_3,x_4,x_5).
\end{eqnarray*}	
Since $\phi$ satisfies the $2$-cocycle condition, we see that $\pi\alpha$ is the zero map, where $\pi: E \rightarrow A$ is the projection. It follows that there is 
%%% $\alpha$ factors through the inclusion $H\rightarrow E$ and can therefore be considered as a map $X^5\longrightarrow H$. 
 $\alpha: X^5 \longrightarrow H $ satisfying the above equality. 
 It is proved that  % We claim that 
 $\alpha:X^5\longrightarrow H$ so defined satisfies the $3$-cocycle condition with heap coefficients in $H$,
 by a direct (though long) calculation. 
\end{proof}

\section{From Heap Cocycles to TSD Cocycles}\label{sec:heapTSD}

In this section, we show that heaps and their 2-cocycles give rise to those for TSDs.
Although a heap gives rise to a group, and $T(x,y,z)=xy^{-1}z$ gives a TSD operation, the following lemma 
provides a direct argument, which provides an idea for the proof of Theorem~\ref{thm:h2cocyTSD}.

\begin{lemma}\label{lem:heapTSD}
	%If $X$ is a heap, then it is a ternary shelf.
	If $[-]$ is a heap operation on $X$, then the same operation is ternary self-distributive.
\end{lemma}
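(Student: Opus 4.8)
The plan is to verify the ternary self-distributive identity
$$[[x,y,z],u,v]=[[x,u,v],[y,u,v],[z,u,v]]$$
directly from the para-associativity and degeneracy axioms, working from the right-hand side toward the left. First I would expand the right-hand side using the type~0 identity $[[a,b,c],d,e]=[a,b,[c,d,e]]$ and the type~1 identity $[[a,b,c],d,e]=[a,[d,c,b],e]$ to move the outer brackets of each slot $[x,u,v]$, $[y,u,v]$, $[z,u,v]$ inward. The key computational device is that the ``middle'' copies of $u,v$ should cancel against each other via the degeneracy conditions $[p,q,q]=p$ and $[q,q,p]=p$; the role of para-associativity is precisely to bring like terms adjacent so that these cancellations become available.

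The main steps, in order, would be: (1)~rewrite $[[x,u,v],[y,u,v],[z,u,v]]$ by applying type~0 (or its consequence) to pull the innermost bracket $[z,u,v]$ out of the rightmost slot, obtaining something of the form $[[x,u,v],[y,u,v],z]$ with $u,v$ appended, keeping careful track of where the extra $u,v$ pair sits; (2)~use type~1 to flip the middle argument $[y,u,v]$ into an expression where a $u,v$ (or $v,u$) pair lands next to the already-present $u,v$ pair from step~(1); (3)~apply a degeneracy condition to annihilate the adjacent repeated pair, collapsing the expression; (4)~repeat an analogous flip-and-cancel on the remaining $[x,u,v]$ term; (5)~reassemble and compare with the left-hand side $[[x,y,z],u,v]$, again invoking type~0 to reconcile the final bracketing. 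Since the excerpt notes that any two of the three para-associativity types imply the third, I am free to use whichever pair is most convenient at each step. It may also be cleaner to pass through the equivalent formulations of the braid-type identity recorded just before Definition~\ref{def:PA1cocy}, i.e. the equalities with $[x_4,x_3,x_2]$ in the middle slot, which are the natural shape for producing adjacent repeated arguments.

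The step I expect to be the main obstacle is bookkeeping: the right-hand side has three nested copies of the pair $(u,v)$, and making sure that after the flips exactly the right two copies become adjacent with the right orientation (so that either $[\,\cdot\,,q,q]=\cdot$ or $[q,q,\cdot\,]=\cdot$ applies, not some mismatched pattern) is delicate and easy to get wrong by a transposition. A safeguard is to do the verification first in the model example $[x,y,z]=xy^{-1}z$ in a (possibly nonabelian) group to see exactly which intermediate expressions appear, and then lift that group computation verbatim to the abstract axioms, since every manipulation used in the group case is a single application of one para-associativity or degeneracy axiom. Once the pattern of cancellations is pinned down, the abstract proof is a short, if fiddly, chain of rewrites.
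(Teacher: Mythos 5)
Your proposal is correct and follows essentially the same route as the paper: the paper first records the identity $[[x,y,z],z,y]=x$ (your ``flip-and-cancel''), then applies type~1 once to turn the middle slot of $[[x_1,x_4,x_5],[x_2,x_4,x_5],[x_3,x_4,x_5]]$ into $[[x_2,x_4,x_5],x_5,x_4]=x_2$, and finishes with type~0. The only difference is that your plan anticipates two separate cancellations, whereas in fact a single one suffices (the $u,v$ in the first slot are absorbed by the type~1 flip itself), a simplification you would discover immediately upon carrying out the bookkeeping.
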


\begin{proof}
	First we note that
	for a heap operation it holds that
	$$ [ [x, y, z], z,y]=  [x, y, [z, z, y] ] = [x, y, y]=x. $$
	Then one computes
	\begin{eqnarray*}
		\lefteqn{T ( T (x_1, x_4, x_5) ,  T (x_2, x_4, x_5) , T (x_3, x_4, x_5) ) }\\
		&=& [ \ [ x_1, x_4, x_5]  ,  [ x_2, x_4, x_5]  , [x_3, x_4, x_5] \ ] \\
		&=&  [ \  x_1 , [\   [ x_2, x_4, x_5] ,  x_5, x_4 ] , [x_3, x_4, x_5] \  ] \\
		&=&  [ \  x_1 ,    x_2 , [x_3, x_4, x_5] \  ] \\
		&=&   [ \  [ x_1 ,    x_2 , x_3] , x_4, x_5  ] \\
		&=& T ( T (x_1, x_2, x_3) ,   x_4, x_5) )
	\end{eqnarray*}
	as desired. The notation $T(x,y,z)=[x,y,z]$ was used for clarification. 
\end{proof}

\begin{theorem}\label{thm:h2cocyTSD}
Let $X$ be a heap, with the operation regarded as a TSD operation by Lemma~\ref{lem:heapTSD}, and let $A$ be an abelian group.
Suppose that $\eta \in Z^2_{\rm H}(X, A)$, that is, $\eta$  satisfies $\delta^2_{(1)} \eta = 0 = \delta^2_{(2)} \eta $
and the degeneracy condition.
% Then by lemma: $\eta(x_1, x_2, x_3)+\eta( [x_1, x_2, x_3], x_3, x_2)=0$..
Then $\eta$ is a TSD $2$-cocycle, $\eta \in Z^2_{\rm SD}(X,A)$. This assignment induces an injection of $H^2_H(X,A)$ into $H^2_{SD}(X,A)$. 
\end{theorem}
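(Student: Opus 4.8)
The plan is to verify the two assertions in turn: that $\eta$ satisfies the single TSD $2$-cocycle equation $\delta^2_{\rm SD}\eta=0$ of Example~\ref{tsd-cocy-cond}, and that the resulting assignment is a well-defined injection on cohomology. For the first part the idea is precisely the one behind Lemma~\ref{lem:heapTSD}: the TSD move $T(T(x_1,x_4,x_5),T(x_2,x_4,x_5),T(x_3,x_4,x_5))=T(T(x_1,x_2,x_3),x_4,x_5)$ factors through the chain of para-associativity rewrites displayed in that proof, so the value $\eta([x_1,x_4,x_5],[x_2,x_4,x_5],[x_3,x_4,x_5])$ can be simplified one move at a time, each move contributing a term killed by one of the cocycle relations. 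I would first apply $\delta^2_{(1)}\eta=0$ to the tuple $(x_1,x_4,x_5,[x_2,x_4,x_5],[x_3,x_4,x_5])$; using the heap identity $[[x_2,x_4,x_5],x_5,x_4]=x_2$ established inside Lemma~\ref{lem:heapTSD}, this gives $\eta([x_1,x_4,x_5],[x_2,x_4,x_5],[x_3,x_4,x_5])=\eta(x_1,x_2,[x_3,x_4,x_5])-\eta(x_1,x_4,x_5)-\eta([x_2,x_4,x_5],x_5,x_4)$. Next, since a type-$1$ and type-$2$ cocycle is automatically a type-$0$ cocycle ($\delta^2_{(0)}=\delta^2_{(1)}-\delta^2_{(2)}$) and $\eta$ satisfies the degeneracy condition, Lemma~\ref{lem:type0eqn} applies and yields $\eta([x_2,x_4,x_5],x_5,x_4)=-\eta(x_2,x_4,x_5)$. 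Finally $\delta^2_{(0)}\eta=0$ on $(x_1,x_2,x_3,x_4,x_5)$ rewrites $\eta(x_1,x_2,[x_3,x_4,x_5])$ as $\eta(x_1,x_2,x_3)+\eta([x_1,x_2,x_3],x_4,x_5)-\eta(x_3,x_4,x_5)$. Substituting these into the formula for $\delta^2_{\rm SD}\eta$ in Example~\ref{tsd-cocy-cond}, every term cancels, so $\eta\in Z^2_{\rm SD}(X,A)$.

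For the second part, observe that the two theories share their low-dimensional cochain groups, $C^1_{\rm PA}(X,A)=C^1_{\rm SD}(X,A)=\{f\colon X\to A\}$ and $C^2_{\rm PA}(X,A)=C^2_{\rm SD}(X,A)=\{f\colon X^3\to A\}$, and that for $T=[-]$ the formulas give $\delta^1_{\rm SD}\xi=-\delta^1_{\rm PA}\xi$. Hence $B^2_{\rm SD}(X,A)={\rm im}(\delta^1_{\rm SD})={\rm im}(\delta^1_{\rm PA})=B^2_{\rm PA}(X,A)$, and by Lemma~\ref{lem:d2d1} together with the observation that coboundaries are degenerate one has $B^2_{\rm PA}(X,A)\subseteq Z^2_{\rm H}(X,A)$. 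Therefore the inclusion $Z^2_{\rm H}(X,A)\hookrightarrow Z^2_{\rm SD}(X,A)$ from the first part descends to a map $H^2_{\rm H}(X,A)\to H^2_{\rm SD}(X,A)$; injectivity is then immediate, since if $\eta\in Z^2_{\rm H}(X,A)$ is a TSD coboundary $\delta^1_{\rm SD}\xi$ then $\eta=\delta^1_{\rm PA}(-\xi)\in B^2_{\rm PA}(X,A)$, i.e. $[\eta]=0$ already in $H^2_{\rm H}(X,A)$.

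The main obstacle is purely bookkeeping in the first part: one must track carefully that the intermediate arguments fed into $\eta$ match the precise patterns of $\delta^2_{(1)}$, $\delta^2_{(0)}$ and Lemma~\ref{lem:type0eqn}, in particular the reversed triples $[x_4,x_3,x_2]$ occurring in the type-$1$ and type-$2$ differentials. Organizing the computation along the chain of equalities in the proof of Lemma~\ref{lem:heapTSD}, and attaching to each elementary move its corresponding vanishing relation, keeps this under control; everything else is formal.
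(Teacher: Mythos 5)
Your proposal is correct and follows essentially the same route as the paper's proof: the TSD cocycle identity is reduced to the same three ingredients, namely $\delta^2_{(1)}\eta=0$ applied to the tuple $(x_1,x_4,x_5,[x_2,x_4,x_5],[x_3,x_4,x_5])$ together with $[[x_2,x_4,x_5],x_5,x_4]=x_2$, the type-$0$ relation on $(x_1,\dots,x_5)$, and Lemma~\ref{lem:type0eqn}, while the cohomology-level injectivity argument via $\delta^1_{\rm SD}=-\delta^1_{\rm PA}$ is identical to the paper's. Your explicit observation that $\delta^2_{(0)}=\delta^2_{(1)}-\delta^2_{(2)}$ is a nice justification of the implication the paper only asserts.
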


\begin{proof}
%\begin{sloppypar}
We note that $\delta^2_{(1)} \eta = 0 = \delta^2_{(2)} \eta $ also implies $\delta^2_{(0)} \eta = 0$,
and the equality $[ [ x,y,z],z,y]=x$ from the proof of Lemma~\ref{lem:heapTSD}.
One computes
%\end{sloppypar}
\begin{eqnarray*}
\lefteqn{  \underline{ \eta (x_1, x_4, x_5) }   -   \eta (x_2, x_4, x_5)+  \eta (x_3, x_4, x_5) } \\
& &   + \underline{   \eta (T(x_1, x_4, x_5), T(x_2, x_4, x_5) , T(x_3, x_4, x_5)) }\\
&=& 
- \eta ( [ x_2, x_4, x_5], x_5, x_4 ) 
+ \underline{ \eta ( x_1, [ [ x_2, x_4, x_5], x_5, x_4 ], [x_3, x_4, x_5] ) }  \\
& & \hspace{1.5in} (= \eta ( x_1, x_2, [x_3, x_4, x_5] ) \ ) \\
& &  -   \eta (x_2, x_4, x_5)+ \underline{   \eta (x_3, x_4, x_5) } \\
&=& 
 \eta (x_1, x_2, x_3)
+ \eta ( [x_1,  x_2, x_3],  x_4, , x_5 ) \\
& & - \underline{  \eta ( [ x_2, x_4, x_5 ], x_5, x_4 )} - \underline{\eta( x_2, x_4, x_5)} \\
& = & 
 \eta (x_1, x_2, x_3)
+ \eta ( T (x_1,  x_2, x_3),  x_4, , x_5 ) 
\end{eqnarray*}
as desired.
The equalities follow from $\delta^2_{(1)} \eta = 0$, $\delta^2_{(0)} \eta = 0$ and
Lemma~\ref{lem:type0eqn}, respectively, and the underlined terms indicate where they are applied.
This proves that we have an inclusion $h: Z^2_{\rm H} (X, A ) \hookrightarrow Z^2_{\rm SD}(X,A)$. Since we have the equality $C^1_{\rm H} (X, A ) =  C^1_{\rm SD}(X,A)$ and the first cochain differentials for heap and TSD cohomologies coincide up to sign,   $\delta^1_{\rm H}=- \delta^1_{\rm SD}$, we have $h (\delta^1_{\rm H}(f) ) = - \delta^1_{\rm SD}( h(f) )$ 
and   $h (  B^2_{\rm H} (X, A ) ) \subset B^2_{\rm SD}(X,A)$, so that $h$ induces a homomorphism
$\bar{h}: H^2_{\rm H} (X, A ) \rightarrow H^2_{\rm SD}(X,A)$. Lastly, the map $\bar{h}$ is injective. Indeed, for $\eta \in  Z^2_{\rm H}(X,A)$,
assume that  $ h(\eta) \in Z^2_{\rm SD}(X,A)$ is null-cohomologous.
Then 
$h(\eta) = \delta^1_{\rm SD}  (\xi') $  for some $\xi' \in C^1_{\rm SD}(X,A)$. 
For $\xi=-\xi' \in C^1_{\rm H}$, we have $\eta=\delta^1_{\rm H} (\xi)$, so that $\eta$ is 
null-cohomologous in $ Z^2_{\rm H}(X,A)$.
\end{proof}

%Let $X$ and $A$ be as in Theorem~\ref{thm:h2cocyTSD}
%By this theorem, the homomorphism 
%$h: Z^2_{\rm H} (X, A ) \rightarrow Z^2_{\rm SD}(X,A)$ is defined by $h(\eta)=\eta$. 
%The groups $C^1_{\rm H} (X, A )  $ and $C^1_{\rm SD}(X,A)$ are the same group of functions $\{ f: X \rightarrow A\} $. 
%Define the same map $h:  C^1_{\rm H} (X, A ) \rightarrow C^1_{\rm SD}(X,A)$ by $h(f)=f$.
%Since the first cochain differentials coincide up to sign,   $\delta^1_{\rm H}=- \delta^1_{\rm SD}$, 
%for the heap $\delta^1_{\rm H}$ and TSD $\delta^1_{\rm SD}$,
%we have $h (\delta^1_{\rm H}(f) ) = - \delta^1_{\rm SD}( h(f) )$ 
%and   $h (  B^2_{\rm H} (X, A ) ) \subset B^2_{\rm SD}(X,A)$. 
%Hence 
% $h$ induces a homomorphism
%$\bar{h}: H^2_{\rm H} (X, A ) \rightarrow H^2_{\rm SD}(X,A)$.
%We have the following.
%
%\begin{proposition}\label{prop:inj}
%The induced homomorphism $\bar{h}$ is injective.
%\end{proposition}
%
%\begin{proof}
%For $\eta \in  Z^2_{\rm H}(X,A)$,
%assume that  $ h(\eta) \in Z^2_{\rm SD}(X,A)$ is null-cohomologous.
%Then 
%$h(\eta) = \delta^1_{\rm SD}  (\xi') $  for some $\xi' \in C^1_{\rm SD}(X,A)$. 
%For $\xi=-\xi' \in C^1_{\rm H}$, we have $\eta=\delta^1_{\rm H} (\xi)$, so that $\eta$ is 
%null-cohomologous in $ Z^2_{\rm H}(X,A)$.
%\end{proof}

\begin{example}
	{\rm
		In Example~\ref{ex:nontheap},  a nontrivial heap $2$-cocycle $\eta$ was given for $X=\Z_3=A$. 
		By Theorem~\ref{thm:h2cocyTSD}, $\eta$ is a non-trivial TSD 2-cocycle. 
		Hence we obtain $H^2_{\rm SD}(X, A)\neq 0$.
	}
\end{example}

%\begin{example}
%{\rm 
%We examine the construction of the preceding theorem for $X=\Z_2$.
%Let $\theta = \chi_{(1,1)}\in Z^2_{G} (\Z_2, \Z_2)$ discussed in Example~\ref{ex:Z211}.
%The $2$-cocycle $\eta$ corresponding to $\theta$ can be seen to be a non-trivial TSD $2$-cocycle using the same cochain $\alpha := (0,1,1)$ as in Example~\ref{ex:Z211}, easily seen to be also a TSD $2$-cycle. The combination of Lemma~\ref{lem:GtoH} and Theorem~\ref{thm:h2cocyTSD} has produced a non-trivial TSD $2$-cocycle.
%}
%\end{example}

%\begin{example}
%	{\rm
%		In Example~\ref{ex:nontheap}, it has been given a nontrivial heap $2$-cocycle. We show now that, applying Theorem~\ref{thm:h2cocyTSD}, the same $\eta$ is a nontrivial TSD $2$-cocycle. To this end, we consider the $2$-chain $\beta := (1,0,2) + (0,1,0) - (1,2,0)$ that can be seen to be a TSD $2$-cycle by direct computation. Observe that it differs from the heap $2$-cycle in Example~\ref{ex:nontheap} only by a sign. An application of a TSD version of Lemma~\ref{lem:cycocy} shows that $\eta$ is a nontrivial TSD $2$-cocycle, since $\eta(\beta) \neq 0$.
%	}
%\end{example}

\begin{remark} 
{\rm
The construction in Theorem~\ref{thm:h2cocyTSD}
and taking extensions commute (c.f.  Remarks \ref{rem:HtoG} and \ref{rem:GtoHext}).
Indeed, for a heap $X$ and an abelian heap $A$, 
the heap extension $X \times A$ by a heap 2-cocycle $\eta$ is defined by 
$$ [ (x, a), (y, b), (z, c) ] = (\, [x,y,z], a-b+c+\eta(x,y,z) \, ), $$
and Lemma~\ref{lem:heapTSD}
states that this heap operation gives a ternary shelf. On the other hand, this is the extension
of a ternary shelf by a TSD 2-cocycle $\eta$ with the heap coefficient $A$
by Definition~\ref{tsd-ext}.
}
\end{remark}

\section{Internalization}\label{sec:cat}

In this section we generalize to monoidal categories, the construction of TSD
structures from heaps.
%we generalize the construction of TSD structures from heaps to monoidal categories.
%%add
Throughout the section all symmetric monoidal categories are strict (the associator 
$(A \boxtimes B) \boxtimes C \rightarrow A \boxtimes (B \boxtimes C) $, the right and left unitors
$I \boxtimes X \rightarrow X$ and $X \boxtimes I \rightarrow X$ are all identity maps, where $I$ is the unit object).

Let $(\mathcal{C}, \boxtimes)$ be a symmetric monoidal category, $(X,\Delta, \epsilon)$ be a comonoid object in $\mathcal{C}$ and consider a morphism $\mu: X\boxtimes X\boxtimes X \rightarrow X$. We translate the heap axioms of Section \ref{heaprev} into commutative diagrams in the category $\mathcal{C}$. 	The  equalities of type 1 and 2 
para-associativity %%% added
are defined by the commutative diagram
	$$
	\begin{tikzcd}
	X^{\boxtimes 3}\arrow[dr,swap,"\mu"]& X^{\boxtimes 5}\arrow[l,swap,"\mu\boxtimes\mathbbm{1}^2"]\arrow[r,"\mathbbm{1}^2\boxtimes\mu"]\arrow[d] & X^{\boxtimes 3}\arrow[dl,"\mu"] \\
	                        &             X           &
	\end{tikzcd}
	$$
where the central arrow corresponds to the morphism 
%$$\mu(\mathbbm{1}\boxtimes\mu\boxtimes\mathbbm{1})(\mathbbm{1}\boxtimes \tau\boxtimes\mathbbm{1}^2)(\mathbbm{1}^2\boxtimes \tau\boxtimes\mathbbm{1}). $$
$\mu(\mathbbm{1}\boxtimes\mu\boxtimes\mathbbm{1})\tau_{321}$
and $\tau_{321}$ is defined by 
$$\tau_{321}=(\mathbbm{1}\boxtimes \tau\boxtimes\mathbbm{1}^2)(\mathbbm{1}^2\boxtimes \tau\boxtimes\mathbbm{1})
(\mathbbm{1}\boxtimes \tau\boxtimes\mathbbm{1}^2). $$
The  type 0 para-associativity  is defined by 
$$
\begin{tikzcd}
X^{\boxtimes 5}\arrow[r,"\mu\boxtimes\mathbbm{1}^2"]\arrow[d,swap,"\mathbbm{1}^2\boxtimes\mu"]&  X^{\boxtimes 3}\arrow[d,"\mu"]\\
X^{\boxtimes 3}\arrow[r,swap,"\mu"] & X
\end{tikzcd}
	$$
and follows from those of types 1 and 2. 
The degeneracy conditions are formulated as commutativity of the following diagrams.
$$
\begin{tikzcd}
X\boxtimes X \arrow[r,"\mathbbm{1}\boxtimes \Delta"]\arrow[d,swap,"\mathbbm{1}\boxtimes \epsilon"] & X\boxtimes X\boxtimes X\arrow[dl,"\mu"]\\
X&
\end{tikzcd}
\text{and} \quad
\begin{tikzcd}
X\boxtimes X \arrow[r,"\Delta\boxtimes \mathbbm{1}"]\arrow[d,swap,"\epsilon\boxtimes \mathbbm{1}"] & X\boxtimes X\boxtimes X\arrow[dl,"\mu"]\\
X&
\end{tikzcd}
$$

\begin{definition}\label{def:heapob}
	{\rm
	A heap object in a symmetric monoidal category is a comonoid object $(X,\Delta, \epsilon)$,
	where $\epsilon: X \rightarrow I$ is a counital morphism to the unit object $I$, 
	 endowed with a morphism  of comonoids $\mu: X^{\boxtimes3} \rightarrow X$ making all the  diagrams above commute. 
}
\end{definition}

\begin{example}
	{\rm
	A (set-theoretic) heap in the sense of Section \ref{heaprev} is a heap object in the category of sets. 
}
\end{example}

The following %is a more sophisticated example. It has
 appeared  implicitly in \cite{ESZ}.
 
\begin{example}\label{ex:Hopf}
	{\rm
	Let $H$ be an involutory Hopf algebra (i.e. $S^2 = \mathbbm{1}$) over a field ${\mathbbm k}$. Then $H$ is a heap object in the monoidal category of vector spaces and tensor products, with the ternary operation $\mu$ induced by the assignment
	$$
  	x\otimes y\otimes z\mapsto xS(y)z
	$$ 
	for single tensors.
	Indeed, %on simple tensors, 
	we have
	\begin{eqnarray*}
    \lefteqn{\mu (\mu (x\otimes y\otimes z)\otimes u\otimes v)}\\
    &=& xS(y)zS(u)v \\
    &=& xS(y)S^2(z)S(u)v\\
    &=& x S(uS(z)y)v\\
    &=& \mu(x\otimes \mu (u\otimes z\otimes y)\otimes v)
    \end{eqnarray*}
corresponding to the commutativity of the diagram representing equality of type 1. Observe that we have used the involutory hypothesis to obtain the second equality. We also have 
\begin{eqnarray*}
	\lefteqn{  \mu (\mathbbm{1}\otimes \Delta)(x\otimes y)}\\
	&=& \mu ( x \otimes y^{(1)} \otimes y^{(2)}) \\
	&=& x S(  y^{(1)}  )  y^{(2)} \\
	&=& \epsilon (y) x
	\end{eqnarray*}
which shows the left degeneracy constraint. The rest of the axioms can be checked in a similar manner.
}
\end{example}

The opposite direction in the group-theoretic case is 
the assertion that a pointed heap generates a group by means of the operation $xy = [x,e,y]$.
The following is a Hopf algebra version
%of the group-theoretic result asserting that a pointed heap generates a group by means of the operation $xy = [x,e,y]$, 
and can be obtained by calculations.
More general statement of this can be found in \cite{BS} and below.

\begin{proposition}\label{prop:HtoHopf}
	Let $(X, [  -  ])$ be a heap object in a coalgebra category, and let 
	%$\eta: \mathbbm{k} \rightarrow X$ be a choice of a unit for the comonoidal structure on $X$.
	$e \in X$ be a group-like element (i.e., $\Delta (e) = e \otimes e$ and $\epsilon (e)=1$). 
	Then $X$ is an involutory Hopf algebra with 
	multiplication $m(x \otimes y):=\mu_e (x \otimes y):= [x \otimes e \otimes y]$, unit $e$, and antipode
	$S(x):=[ e \otimes x \otimes e ]$. 
\end{proposition}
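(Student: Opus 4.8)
The plan is to verify directly that the data $(X, m, e, S)$ satisfies the axioms of an involutory Hopf algebra (Hopf monoid) in the given coalgebra category, reusing the heap-object identities translated into commutative diagrams in Section~\ref{sec:cat}. Since $e$ is group-like, the morphisms $\eta_e: I \to X$ picking out $e$, as well as $m = \mu(\mathbbm{1} \boxtimes \eta_e \boxtimes \mathbbm{1})$ and $S = \mu(\eta_e \boxtimes \mathbbm{1} \boxtimes \eta_e)$, are all comonoid morphisms, because $\mu$ is assumed to be a comonoid morphism and group-like elements give comonoid morphisms $I \to X$. So the bialgebra compatibility between $m$ and $\Delta$ (and between $\eta_e$ and $\Delta,\epsilon$) is automatic; the content is associativity of $m$, the unit axioms, and the antipode axiom.

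First I would establish the auxiliary identity $[[x,y,z],z,y] = x$ internally (this is exactly the computation opening the proof of Lemma~\ref{lem:heapTSD}, valid diagrammatically using one para-associativity diagram and a degeneracy diagram together with coassociativity/counitality of $\Delta$). Next, associativity $m(m \boxtimes \mathbbm{1}) = m(\mathbbm{1} \boxtimes m)$: both sides compute $[[x,e,y],e,z]$ versus $[x,e,[y,e,z]]$, and these agree by the type~0 para-associativity diagram (with the middle entries forced to be $e$ via the group-like/counit properties). Then the unit axioms $m(\eta_e \boxtimes \mathbbm{1}) = \mathbbm{1} = m(\mathbbm{1} \boxtimes \eta_e)$ are precisely the two degeneracy diagrams: $[e,e,y] = y$ and $[x,e,e] = x$, where again one uses that $e$ is group-like so that $\Delta$ applied to the inserted $e$ reproduces the diagram in Definition~\ref{def:heapob}. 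Finally, for the antipode axiom I must show $m(S \boxtimes \mathbbm{1})\Delta = \eta_e \epsilon = m(\mathbbm{1} \boxtimes S)\Delta$; unpacking, this says $[[e,x^{(1)},e], e, x^{(2)}]$ equals $\epsilon(x) e$ and symmetrically. This reduces, using the degeneracy identity $[y,e,e]=y$ to collapse the inner $e$'s, and then a para-associativity move plus the degeneracy axiom $[a,a,b]=b$ applied to the copied $x^{(1)}, x^{(2)}$ (here coassociativity and cocommutativity of $\Delta$ enter), to $\epsilon(x) e$.

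The main obstacle I expect is bookkeeping with the comultiplication: every degeneracy and para-associativity diagram in Section~\ref{sec:cat} is stated for \emph{three separate inputs}, whereas in the Hopf-algebra identities the same variable $x$ (or $e$) is fed in through $\Delta$ into two slots. Making each step rigorous therefore requires inserting the appropriate $\Delta$'s and $\epsilon$'s and invoking coassociativity, counitality, and — for the antipode step — cocommutativity, to massage a diagram with a copied input into one of the given axiom diagrams. In particular the antipode computation needs the comonoid-morphism property of $\mu$ in the form $\Delta \mu = (\mu \boxtimes \mu)\,\tau_{(\cdot)}\,(\Delta \boxtimes \Delta \boxtimes \Delta)$ to move $\Delta$ past $\mu$, which is where involutivity of $S$ ultimately comes from as well (the relation $S^2 = \mathbbm{1}$ follows from $[e,[e,x,e],e] = x$, itself a consequence of applying $[[a,b,c],c,b]=a$ with suitable group-like insertions, or directly from $m(\mathbbm{1}\boxtimes S)\Delta$ symmetry). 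Once the diagrammatic translations are set up carefully, each verification is a short chase; citing \cite{BS} for the general torsor-to-Hopf-monoid statement can shorten the exposition, but I would include the explicit formulas $m = \mu_e$, unit $e$, $S(x) = [e,x,e]$ and at least the antipode chase for completeness.
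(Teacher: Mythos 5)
Your proposal is correct and follows essentially the same route as the paper: a direct verification of associativity via type 0 para-associativity, the unit laws via the degeneracy diagrams together with $\Delta(e)=e\otimes e$, the compatibility of $m$ with $\Delta$ from $\mu$ being a comonoid morphism and $e$ group-like, and the antipode law by one para-associativity move followed by the degeneracy diagrams. The only small correction is that cocommutativity of $\Delta$ is not needed anywhere: the antipode chase
$[[e,x^{(1)},e],e,x^{(2)}]=[e,x^{(1)},[e,e,x^{(2)}]]=[e,x^{(1)},x^{(2)}]=e\,\epsilon(x)$
uses only type 0 para-associativity and the two degeneracy diagrams (applied to the group-like $e$ and to the copied $x^{(1)}\otimes x^{(2)}$ respectively), so the result holds for arbitrary, not necessarily cocommutative, comonoids.
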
 

\begin{proof}
We use Sweedler's notation $\Delta(x)=x^{(1)} \otimes x^{(2)}$. 
The associativity of $m$ follows from the type 0 para-associativity of $\mu$.
A unit condition is computed by 
$$m (e \otimes x)= \mu (e \otimes e \otimes x)=\mu (\Delta (e) \otimes x)=x $$
by the degeneracy condition and the assumption that $e$ is group-like. The other condition  $m( x \otimes e)=x$ is similar.
The compatibility between $m$ and $\Delta $ is computed as 
\begin{eqnarray*}
 \Delta m (x \otimes y ) &=& \Delta \mu ( x \otimes e \otimes y ) \\
  & = & \mu \tau ( \Delta (x) \otimes  \Delta (e) \otimes  \Delta (y) ) \\
  &=&  \mu \tau ( x^{(1)} \otimes   x^{(2)} \otimes  e \otimes e  \otimes  y^{(1)} \otimes   y^{(2)}   ) \\
  &=& \mu (  x^{(1)} \otimes e  \otimes  y^{(1)} ) \otimes \mu (  x^{(2)} \otimes e  \otimes  y^{(2)} ) \\
  &=& m (  x^{(1)} \otimes   y^{(1)}  ) \otimes m (  x^{(2)} \otimes  y^{(2)} ) 
  \end{eqnarray*}
as desired, where $\tau$ is an appropriate permutation that give the third equality, and the group-like assumption 
is used in the second equality.
An antipode condition is computed as 
\begin{eqnarray*}
\lefteqn{  m ( S \otimes \mathbbm{1} )   \Delta (x) }\\
&=& \mu ( \mu ( e \otimes x^{(1)}  \otimes e ) \otimes e \otimes x^{(2)} ) \\
&=& \mu ( e \otimes x^{(1)}  \otimes  \mu ( e  \otimes e \otimes x^{(2)} ) ) \\
&=&  \mu ( e \otimes x^{(1)}  \otimes  \mu ( \Delta (e)  \otimes x^{(2)} ) ) \\
&=&  \mu ( e \otimes x^{(1)}  \otimes  x^{(2)} )  \epsilon (e) \\
&=& e \epsilon (x) 
\end{eqnarray*}
as desired, where the group-like condition $ \epsilon (e)$ and the degeneracy condition for $\mu$ were used. The other case
$ m ( \mathbbm{1} \otimes  S )   \Delta (x) =  e \epsilon (x) $ is similar.
This completes the proof.
\end{proof}

\begin{remark}
	{\rm
	Observe that $S$ so defined, is involutory. This observation corroborates the necessity of including the involutory hypothesis in Example~\ref{ex:Hopf}. 
}
\end{remark}

%\begin{remark}\label{rem:coaug}
%{\rm
%We observe a relation between a choice of a group-like element $e$ in Proposition~\ref{prop:HtoHopf}
%and a {\it coaugmentation map}  of a coalgebra.
%Let $(X, \Delta, \epsilon) $ be a coalgebra. A coaugmentation is a coalgebra morphism $\eta: \mathbbm{k} \rightarrow X$
%(i.e., $\Delta \eta = ( \eta \otimes \eta ) j$, where $j:  {\mathbbm k}  \rightarrow  {\mathbbm k} \otimes {\mathbbm k}$ is the canonical isomorphism,
%$j(1)=1 \otimes 1$) such that $\epsilon \eta = \mathbbm{1}|_\mathbbm{k}$. 
%Let $e=\eta(1)$. We show that $e$ is group-like.
%One computes $\Delta (e) = \Delta \eta (1) =  \eta (1) \otimes \eta (1) = e \otimes e$, and 
%$\epsilon (e) = \epsilon ( \eta (1)) =1$ as desired.
%We do not know, however, in general the converse holds, i.e., whether for any group-like element $e$, there exists a coaugmentation map $\eta$ such that $\eta (1)=e$. 
%We observe that an advantage of using coaugmentation map is the desired condition can be stated by a map, 
%without mention of particular elements, which become handy in categorical definitions as we see below.
%}
%\end{remark}
\begin{remark}\label{rem:coaug}
{\rm
	We observe a relation between a choice of a group-like element $e$ in Proposition~\ref{prop:HtoHopf}
	and a {\it coaugmentation map}  of a coalgebra.
	Let $(X, \Delta, \epsilon) $ be a coalgebra. A coaugmentation is a coalgebra morphism $\eta: \mathbbm{k} \rightarrow X$
	(i.e., $\Delta \eta = ( \eta \otimes \eta ) j$, where $j:  {\mathbbm k}  \rightarrow  {\mathbbm k} \otimes {\mathbbm k}$ is the canonical isomorphism,
	$j(1)=1 \otimes 1$) such that $\epsilon \eta = \mathbbm{1}|_\mathbbm{k}$. 
	Let $e=\eta(1)$. We show that $e$ is group-like.
	One computes $\Delta (e) = \Delta \eta (1) =  \eta (1) \otimes \eta (1) = e \otimes e$, and 
	$\epsilon (e) = \epsilon ( \eta (1)) =1$ as desired.
	%We do not know, however, in general the converse holds, i.e., whether for any group-like element $e$, there exists a coaugmentation map $\eta$ such that $\eta (1)=e$. 
	Conversely, for any group element $e\in X$, let $\eta$ be defined by $\eta(1)=e$.
	Then one computes 
	$\Delta \eta (1) = \Delta (e) =e \otimes e = \eta(e) \otimes \eta(e) = 
	( \eta \otimes \eta )j(1)$
	and  $\epsilon \eta (1) = \epsilon (e) = 1 $.
	We observe that an advantage of using coaugmentation map is that the desired condition can be stated by a map, 
	without mention of particular elements, which becomes advantageous in categorical definitions as we see below.
}
\end{remark}

We generalize Example~\ref{ex:Hopf} and Proposition~\ref{prop:HtoHopf}  to symmetric monoidal categories as follows,
using Remark~\ref{rem:coaug}.
For this purpose first we define a {\it coaugmentation} of a comonoid object $(X, \Delta, \epsilon )$ 
in a symmetric monoidal category with a unit object $I$ as a comonoidal morphism $\eta: I \rightarrow X$ 
such that $\epsilon \eta = \mathbbm{1}$. 

%We recall that the {\it product} in a category is an object  $X_1 \times X_2$ and morphisms $\pi_1: X_1 \times X_2 \rightarrow X_1$ and 
%$\pi_2: X_1 \times X_2 \rightarrow X_2 $ with the universal property that for any object $Y$ and 
%morphisms $f_i : Y \rightarrow X_i$, $i=1,2$, there exists a unique $f : Y \rightarrow X_1 \times X_2 $ such that 
%$f=\pi_1 f_i $ for $i=1,2$. For objects $X_1$ and $X_2$, if there is a product $X_1 \times X_2$, we call is a {\it product with projections}.

\begin{definition}
	{\rm
	Let $\mathcal C$ be a symmetric monoidal category. We define the {\it category of heap objects} in $\mathcal C$, $\mathcal{H}_{\mathcal{C}}$, as follows. The objects of $\mathcal{H}_{\mathcal{C}}$ are the heap objects as in Definition~\ref{def:heapob}. The morphisms are defined to be the morpshisms of $\mathcal{C}$ commuting with the heap maps and the comonoidal structures. A heap object $X$, is called {\it pointed}, if it is endowed with a 
	coaugmentation % unit 
	$\eta: I \rightarrow X$. %, with respect to the comonoidal structure. 
	The {\it category of pointed heap objects} in $\mathcal C$, $\mathcal{H}^*_{\mathcal{C}}$, is the category consisting of pointed heap objects over $\mathcal C$, and morphisms of heap objects commuting with the coaugmentations.
	
}
\end{definition}

%The dual of a  comonoid object $(X, \Delta, \epsilon)$ in a symmetric monoidal category ${\mathcal C}$  is 
%a monoidal object $(X, m, \eta)$ in ${\mathcal C}^{\rm op}$, where $\eta: I \rightarrow X$ is a unital morphism from
%the unit object $I$. 
%Let $iH_{\mathcal C}$ denote the category of involutory Hopf monoid objects  in ${\mathcal C}^{\rm op}$.
%In particular, an object $X$  in ${\mathcal C}^{\rm op}$ is equipped with a monoidal product $m$, a unit object $I$, 
%right and left unitors, comonoidal product $\Delta$, left and right counitors, and a morphism, an antipodal morphism $S$
%with $S^2=\mathbbm{1}$, 
%and a morphism $\eta: I \rightarrow X$ that satisfies $m (\eta \boxtimes \mathbbm{1})=\lambda$ and 
%$m (  \mathbbm{1} \boxtimes \eta)=\rho$. %%%% CHECK!

An  {\it involutory Hopf} monoid (object) in a symmetric monoidal category is equipped with 
a monoidal product $m$, a unit object $I$, 
a comonoidal product $\Delta$,  an antipode $S$ that is an  antimorphism 
(i.e., $\Delta S=(S \boxtimes S) \tau \Delta$ and $S m = m \tau (S \boxtimes S)$)
satisfying 
$
m (S \boxtimes \mathbbm{1}) \Delta = \eta \epsilon = m ( \mathbbm{1}\boxtimes  S )  \Delta$ and 
$ S^2 = \mathbbm{1}$, 
 a unit morphism $\eta: I \rightarrow X$ that satisfies the left and right unital conditions  $m (\eta \boxtimes \mathbbm{1})=\mathbbm{1} $ and 
$m (  \mathbbm{1} \boxtimes \eta)=\mathbbm{1} $,
and a counit morphism $\epsilon: X \rightarrow I$ that satisfies 
the left and right counital conditions $( \epsilon \boxtimes \mathbbm{1} ) \Delta =\mathbbm{1} $, 
$( \mathbbm{1} \boxtimes  \epsilon ) \Delta =\mathbbm{1} $, 
and $\epsilon \eta=\mathbbm{1}_I$. %% Do we need this ?? 

\begin{theorem}
	Let $\mathcal{C}$ be a symmetric monoidal category. There is an equivalence of categories between the category $\mathcal{H}^*_{\mathcal{C}}$ and  
	the category of involutory Hopf monoids in $\mathcal C$, $sH_{\mathcal C}$. 
	\end{theorem}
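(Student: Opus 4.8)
The plan is to construct two functors $F \colon \mathcal{H}^*_{\mathcal{C}} \to sH_{\mathcal{C}}$ and $G \colon sH_{\mathcal{C}} \to \mathcal{H}^*_{\mathcal{C}}$ and show they are mutually quasi-inverse, promoting the object-level bijections of Example~\ref{ex:Hopf} and Proposition~\ref{prop:HtoHopf} (suitably internalized via Remark~\ref{rem:coaug}) to an equivalence of categories. First I would define $F$ on objects: given a pointed heap object $(X, \mu, \Delta, \epsilon, \eta)$, set $m := \mu (\mathbbm{1} \boxtimes \eta \epsilon' \boxtimes \mathbbm{1})$ — more precisely $m := \mu (\mathbbm{1} \boxtimes \eta \boxtimes \mathbbm{1})$ composed with the canonical iso $X \boxtimes I \boxtimes X \cong X \boxtimes X \boxtimes X$ is not quite right; rather $m \colon X \boxtimes X \to X$ is $\mu \circ (\mathbbm{1}_X \boxtimes \eta \boxtimes \mathbbm{1}_X) \circ (\rho^{-1} \boxtimes \mathbbm{1})$ where we insert the unit $I$ between the two copies of $X$ — and antipode $S := \mu (\eta \boxtimes \mathbbm{1}_X \boxtimes \eta)$ (inserting $I$'s on both sides and applying $\eta$). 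The counit and comonoid structure $(\Delta, \epsilon)$ are carried over unchanged, and the Hopf unit is the coaugmentation $\eta$ itself. The verification that this is an involutory Hopf monoid is exactly the internalization of Proposition~\ref{prop:HtoHopf}: associativity of $m$ from type~0 para-associativity, unitality from the degeneracy diagrams together with $\epsilon \eta = \mathbbm{1}_I$, bimonoid compatibility from $\mu$ being a comonoid morphism and $\eta$ being comonoidal, the antipode axioms from degeneracy, and $S^2 = \mathbbm{1}$ from a short diagram chase using para-associativity (as noted in the Remark following Proposition~\ref{prop:HtoHopf}). One must also check $S$ is an antimorphism; this is the internalization of the standard fact that the antipode of a Hopf monoid is an antimorphism, or can be checked directly from the heap axioms.

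Next I would define $G$ on objects: given an involutory Hopf monoid $(H, m, \eta, \Delta, \epsilon, S)$, put $\mu := m (m \boxtimes \mathbbm{1}_H)(\mathbbm{1}_H \boxtimes S \boxtimes \mathbbm{1}_H)$, i.e. the internalization of $x \otimes y \otimes z \mapsto x S(y) z$, keep $(\Delta, \epsilon)$, and take the coaugmentation to be $\eta$. The verification that $(H, \mu, \Delta, \epsilon, \eta)$ is a pointed heap object is precisely the internalization of Example~\ref{ex:Hopf}: the type~1 and type~2 para-associativity diagrams follow from associativity of $m$, the antimorphism property of $S$, and $S^2 = \mathbbm{1}$ (the involutory hypothesis is essential, exactly as in the displayed computation in Example~\ref{ex:Hopf}); $\mu$ is a comonoid morphism because $m$, $\Delta$, $S$ are compatible in a Hopf monoid; and the two degeneracy diagrams follow from the antipode axioms $m(S \boxtimes \mathbbm{1})\Delta = \eta \epsilon = m(\mathbbm{1} \boxtimes S)\Delta$. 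All of these are finite string-diagram manipulations valid in any symmetric monoidal category since we only use naturality of the symmetry $\tau$ and the strictness hypothesis imposed at the start of the section.

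Having both functors on objects, I would extend them to morphisms: a morphism in $\mathcal{H}^*_{\mathcal{C}}$ commutes with $\mu$, $\Delta$, $\epsilon$ and $\eta$, hence automatically commutes with the derived $m = \mu(\mathbbm{1} \boxtimes \eta \boxtimes \mathbbm{1})$ and $S = \mu(\eta \boxtimes \mathbbm{1} \boxtimes \eta)$, so $F$ is well defined on morphisms; conversely a morphism of involutory Hopf monoids commuting with $\eta$ commutes with the derived $\mu = m(m \boxtimes \mathbbm{1})(\mathbbm{1} \boxtimes S \boxtimes \mathbbm{1})$, so $G$ is well defined on morphisms. Both $F$ and $G$ are evidently identity on underlying objects, morphisms, comonoid structures and on $\eta$. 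Finally I would check $G F = \mathrm{id}$ and $F G = \mathrm{id}$ \emph{on the nose} (not merely up to natural isomorphism): starting from a pointed heap, forming $m$ and $S$, then forming $m(m \boxtimes \mathbbm{1})(\mathbbm{1} \boxtimes S \boxtimes \mathbbm{1})$ recovers $\mu$ by a computation that uses para-associativity and the degeneracy/unit conditions (this is the internalized identity ``$\mu(x \otimes y \otimes z) = (xS(y))z$'' combined with ``$S(y) = eS(y)e$ reduces correctly''); and starting from an involutory Hopf monoid, the multiplication extracted from $\mu = m(m \boxtimes \mathbbm{1})(\mathbbm{1} \boxtimes S \boxtimes \mathbbm{1})$ via $\mu(\mathbbm{1} \boxtimes \eta \boxtimes \mathbbm{1})$ is $m(m \boxtimes \mathbbm{1})(\mathbbm{1} \boxtimes S\eta \boxtimes \mathbbm{1})$; since $S \eta = \eta$ in a Hopf monoid and $m(\mathbbm{1} \boxtimes \eta) = \mathbbm{1}$, this collapses back to $m$, and likewise the extracted antipode collapses to $S$ using the unital and antipode axioms.

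The step I expect to be the main obstacle is the bookkeeping for the ``on the nose'' inverse identities $GF = \mathrm{id}$ and $FG = \mathrm{id}$ and, within $F$, verifying that the candidate antipode $S = \mu(\eta \boxtimes \mathbbm{1} \boxtimes \eta)$ is a genuine \emph{antimorphism} rather than just satisfying the convolution-inverse equation: unlike the associativity and unitality checks, which are nearly verbatim internalizations of the set-level arguments, the antimorphism property requires combining all three types of para-associativity with both degeneracy diagrams and careful use of the symmetry $\tau$ to reorder tensor factors, and it is easy to produce a sign/ordering error. A clean way to organize this is to first establish the bimonoid (bialgebra) axioms, then invoke the internalized uniqueness of convolution inverses to identify $S$ with the Hopf-monoid antipode, and only then deduce the antimorphism property formally; this avoids a long direct diagram chase. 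The remaining verifications are routine given the strictness assumption and the earlier results, so I would state them as following ``by direct diagrammatic computation, analogous to Example~\ref{ex:Hopf} and Proposition~\ref{prop:HtoHopf}.''
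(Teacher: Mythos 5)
Your proposal is correct and follows essentially the same route as the paper: both construct the functor $\mathcal{H}^*_{\mathcal{C}} \to sH_{\mathcal C}$ via $m = \mu(\mathbbm{1}\boxtimes\eta\boxtimes\mathbbm{1})$ and $S = \mu(\eta\boxtimes\mathbbm{1}\boxtimes\eta)$, the quasi-inverse via $\mu = m(m\boxtimes\mathbbm{1})(\mathbbm{1}\boxtimes S\boxtimes\mathbbm{1})$, take both functors to be the identity on morphisms, and verify the axioms by internalizing the computations of Example~\ref{ex:Hopf} and Proposition~\ref{prop:HtoHopf}. If anything, you are more explicit than the paper about the points it leaves implicit (the antimorphism property of $S$ and the on-the-nose identities $GF=\mathrm{id}$, $FG=\mathrm{id}$), and your suggestion to obtain the antimorphism property from uniqueness of convolution inverses is a sensible way to organize that check.
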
	

	\begin{proof}
		We define a functor $\mathcal F: \mathcal{H}^*_{\mathcal{C}} \longrightarrow sH_{\mathcal C}$ as follows. Let $(X, \eta, \mu, \epsilon, \Delta)$ be a pointed heap object in $\mathcal C$,  define $\mathcal{F}(X) := (X, I, \eta, \lambda_\eta, \rho_\eta,  m, \epsilon, \Delta, S)$, where multiplication $ m := \mu \circ \mathbbm{1}\boxtimes \eta \boxtimes \mathbbm{1}$, antipode $S := \mu \circ \eta \boxtimes \mathbbm{1}\boxtimes \eta$, the unit object $I$, %%% <--- Check!
		the left and right unitors $\lambda_\eta$, $\rho_\eta$ by 
$\lambda_\eta := m (\eta \boxtimes \mathbbm{1} )  : I  \boxtimes X \rightarrow  X$ and 
$\rho_\eta := m (  \mathbbm{1} \boxtimes \mathbbm{1} ) :X \boxtimes I  \rightarrow X$,		
 and comonoidal structure unchanged. The functor $\mathcal F$ is defined to be the identity on morphisms.
 The fact that $\mathcal{F}(X)$ is a Hopf monoid in $\mathcal C$ is a translation of the computations in Example~\ref{ex:Hopf} in commutative diagrams or series of composite morphisms. 

Specifically, defining conditions are verified as follows.
The associativity of $m$ follows from the para-associativity as before.
The unit $\eta$ and the counit $\epsilon$ are unchanged and a left unital condition is checked by 
%The left unitor is 
\begin{eqnarray*}
\lefteqn{ 
%\lambda_\eta  = 
m (\eta  \boxtimes \mathbbm{1} ) = \mu (\eta  \boxtimes \eta  \boxtimes \mathbbm{1} ) }\\
& & 
= \mu ( \Delta \eta  \boxtimes \mathbbm{1} ) = ( \epsilon \boxtimes \mathbbm{1} )  (\eta \boxtimes  \mathbbm{1} )
= \epsilon \eta \boxtimes \mathbbm{1} =\mathbbm{1}_I \boxtimes \mathbbm{1} = \mathbbm{1}
\end{eqnarray*}
%which is an invertible morphism 
as desired. The right unital condition is computed similarly.
The compatibility between $m$ and $\Delta$ is computed as 
\begin{eqnarray*}
\lefteqn{ ( m \boxtimes m) %\tau 
( \Delta \boxtimes \Delta ) =
[    (\mu ( \mathbbm{1}\boxtimes  \eta \boxtimes \mathbbm{1} ) 
\boxtimes (\mu ( \mathbbm{1}\boxtimes  \eta \boxtimes \mathbbm{1} )  ] % \tau 
( \Delta \boxtimes \Delta ) } \\
& &
= (\mu \boxtimes \mu) ( \Delta \boxtimes  \Delta \eta \boxtimes \Delta )
=  (\mu \boxtimes \mu) ( \Delta \boxtimes  \Delta \boxtimes \Delta ) ( \mathbbm{1}\boxtimes  \eta \boxtimes \mathbbm{1} ) \\
& & 
=  \Delta \mu  (  \mathbbm{1}\boxtimes  \eta \boxtimes \mathbbm{1} ) 
=  \Delta m  .
\end{eqnarray*}
We note that the compositions involving  $ \Delta \boxtimes \Delta $ contain appropriate permutations of factors of objects.
The antipode condition is computed as 
\begin{eqnarray*}
\lefteqn{   m ( S \boxtimes \mathbbm{1} ) \Delta } \\
&=& 
\mu ( \mathbbm{1} \boxtimes \eta \boxtimes  \mathbbm{1})    ( \mu (\eta \boxtimes \mathbbm{1} \boxtimes \eta ) \boxtimes \mathbbm{1} )  \Delta \\
&=& 
\mu ( \eta \boxtimes \mathbbm{1} \boxtimes \mathbbm{1} ) ( \mu (\eta \boxtimes \eta \boxtimes \mathbbm{1} )   \boxtimes \mathbbm{1} )  \Delta \\
&=&
(\eta \boxtimes \epsilon) ( \epsilon \eta \boxtimes \mathbbm{1} ) \Delta \\
&=& \epsilon \eta .
\end{eqnarray*}
The other antipode condition is similar.
 
   Similarly, we define a functor $\mathcal G : sH_{\mathcal C} \longrightarrow \mathcal{H}^*_{\mathcal{C}}$ by the assignment on objects 
  $\mathcal G (X, \eta, m, \epsilon, \Delta, S) := (X, \eta, \mu, \epsilon, \Delta)$, 
% $\mathcal G (X, \lambda, \rho, m, \epsilon, \Delta, S, \eta) := (X, \eta, \mu, \epsilon, \Delta, \eta)$ 
with 
   $\mu := m  (m\boxtimes \mathbbm{1})(\mathbbm{1}\boxtimes S\boxtimes \mathbbm{1})$.   
   Also, $\mathcal G$ is the identity on morphisms. %We leave the details of the proof to the reader.
   %% add:
   The proof is obtained by sequences of equalities of composite morphisms mimicking the computations in 
   Example~\ref{ex:Hopf}. 
\end{proof}

Next we show that Lemma~\ref{lem:heapTSD} holds for a coalgebra (i.e. a comonoid in the category of vector spaces).
Although this is a special case of Theorem~\ref{thm:cat}, we include its statement and proof here to  illustrate and further motivate Theorem~\ref{thm:cat}.
For this goal, we slightly modify the definition of TSD maps in a symmetric monoidal categories, given in \cite{ESZ}.

\begin{definition}\label{def:TSD}
{\rm
Let $(X, \Delta, \epsilon)$ be a comoidal object in  a symmetric monoidal category $\mathcal{C}$.
A {\em ternary self-distributive object} $(X, \Delta, \epsilon, \mu)$ in $\mathcal{C}$  is a comonoidal object 
that satisfies the following condition:
$$
\mu (\mu^{\boxtimes 3})\shuffle_3 [ (\Delta' \boxtimes \mathbbm{1})\Delta \boxtimes (\Delta'\boxtimes \mathbbm{1})\Delta ]
= \mu (\mu \boxtimes \mathbbm{1}^{\boxtimes 2})
$$
where $\shuffle_3$ denotes the composition of switching maps corresponding to 
transpositions $(2\: 4) (3 \: 7) (6 \: 8)$ and $\Delta' := \tau \Delta$.
}
\end{definition}

This differs from the definition found in \cite{ESZ} only in the use of $\Delta'$ instead of $\Delta$. 
The main examples, set theoretical ones and  Hopf algebras,  satisfy both definitions.

\begin{proposition}\label{pro:hopfTSD}
	Let $H$ and $\mu$ be as in the Example \ref{ex:Hopf}.
	Then $\mu$ defines a ternary self-distributive object in the category of vector spaces. %, in the sense of \cite{ESZ}.
\end{proposition}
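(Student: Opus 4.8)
The plan is to unwind the defining identity of Definition~\ref{def:TSD} for the concrete ternary operation $\mu(x\otimes y\otimes z)=xS(y)z$, and to reduce it to a Hopf-algebraic identity proved by exactly the manipulations already used in Example~\ref{ex:Hopf}: the anti-homomorphism property of $S$, the involutory hypothesis $S^2=\mathbbm{1}$, and the antipode/counit axioms. Since both sides of the equation in Definition~\ref{def:TSD} are linear maps $H^{\otimes 5}\to H$, it suffices to check the identity on elementary tensors $x\otimes y\otimes z\otimes u\otimes v$. On these the right-hand side $\mu(\mu\boxtimes\mathbbm{1}^{\boxtimes 2})$ produces at once
$$\mu(\mu(x\otimes y\otimes z)\otimes u\otimes v)=x\,S(y)\,z\,S(u)\,v,$$
which is the set-theoretic value $T(T(x,y,z),u,v)$ from Lemma~\ref{lem:heapTSD}.

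For the left-hand side I would first trace the comultiplications and the shuffle: the two copies of $(\Delta'\boxtimes\mathbbm{1})\Delta$ triplicate $u$ and $v$ into their iterated coproducts, and $\shuffle_3$ interleaves the six resulting Sweedler factors with $x,y,z$ so that, after $\mu^{\boxtimes 3}$ and the outer $\mu$, one obtains an expression of the shape
$$\sum\mu\bigl(\mu(x\otimes u^{(1)}\otimes v^{(1)})\otimes\mu(y\otimes u^{(2)}\otimes v^{(2)})\otimes\mu(z\otimes u^{(3)}\otimes v^{(3)})\bigr),$$
the precise matching of Sweedler components to the three inner copies of $\mu$ being dictated by $\Delta'=\tau\Delta$ and by the transpositions defining $\shuffle_3$. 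Expanding with the definition of $\mu$, and applying the antipode to the middle argument via $S\bigl(y\,S(u^{(2)})\,v^{(2)}\bigr)=S(v^{(2)})\,u^{(2)}\,S(y)$ — exactly where the involutory hypothesis $S^2=\mathbbm{1}$ enters, as in Example~\ref{ex:Hopf} — this becomes
$$\sum x\,S(u^{(1)})\,v^{(1)}\,S(v^{(2)})\,u^{(2)}\,S(y)\,z\,S(u^{(3)})\,v^{(3)}.$$
Now the adjacent factor $v^{(1)}S(v^{(2)})$ collapses through $m(\mathbbm{1}\boxtimes S)\Delta=\eta\epsilon$ together with coassociativity, leaving $v$ in the place of $v^{(3)}$; then $S(u^{(1)})u^{(2)}$ is made adjacent and collapses through $m(S\boxtimes\mathbbm{1})\Delta=\eta\epsilon$, leaving $u$ in the place of $u^{(3)}$. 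What survives is precisely $x\,S(y)\,z\,S(u)\,v$, matching the right-hand side, so $\mu$ is a ternary self-distributive object.

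The only genuine work is the combinatorial bookkeeping of $\shuffle_3$ and the twist $\Delta'$: one must verify that they deliver the six coproduct factors in the order that makes the two antipode--counit collapses available, i.e.\ so that, after the expansion, the copies of $v$ occur as $v^{(1)},\,S(v^{(2)}),\,v^{(3)}$ and those of $u$ as $S(u^{(1)}),\,u^{(2)},\,S(u^{(3)})$ in the displayed product (which copy carries an antipode in each slot is already forced by the form of $\mu$ and by whether that slot is the middle argument of the outer $\mu$). Once this ordering is pinned down the algebra is identical to Example~\ref{ex:Hopf}. An alternative, more structural route — which anticipates the internalization Theorem~\ref{thm:cat} — would be to transcribe the proof of Lemma~\ref{lem:heapTSD} into a chain of commutative diagrams built from the categorical heap axioms of Definition~\ref{def:heapob}, which $H$ satisfies by Example~\ref{ex:Hopf}, replacing the auxiliary identity $[[x,y,z],z,y]=x$ by its diagrammatic counterpart.
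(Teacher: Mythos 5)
Your proof is correct in substance, but it takes a genuinely different route from the paper's. You expand $\mu$ into $x\otimes y\otimes z\mapsto xS(y)z$ on both sides and reduce the TSD identity to the antipode--counit axioms; the paper instead never unpacks $S$ at all. It first proves the Sweedler-notation analogue of the auxiliary identity $[[x,y,z],z,y]=x$, namely $\mu(\mu(x\otimes y^{(1)}\otimes z^{(1)})\otimes z^{(2)}\otimes y^{(2)})=x\,\epsilon(y)\epsilon(z)$, using only type~0 para-associativity and the degeneracy axioms already verified in Example~\ref{ex:Hopf}, and then replays the four-step para-associativity argument of Lemma~\ref{lem:heapTSD} verbatim on the expression $\mu(\mu(x_1\otimes x_4^{(12)}\otimes x_5^{(12)})\otimes\mu(x_2\otimes x_4^{(11)}\otimes x_5^{(11)})\otimes\mu(x_3\otimes x_4^{(2)}\otimes x_5^{(2)}))$. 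This is exactly the ``more structural route'' you relegate to your final sentence, and it is what generalizes directly to Theorem~\ref{thm:cat}; your primary computation buys a self-contained verification that makes visible exactly where the Hopf axioms enter, but it is tied to the category of vector spaces. One small caution on your bookkeeping: since $\Delta'=\tau\Delta$, the first inner $\mu$ actually receives $u^{(2)}\otimes v^{(2)}$ and the second receives $u^{(1)}\otimes v^{(1)}$ (in three-fold Sweedler numbering), not the order you display. The collapses you then need are $\sum v^{(2)}S(v^{(1)})=\epsilon(v)1$ and $\sum S(u^{(2)})u^{(1)}=\epsilon(u)1$, i.e.\ the antipode axiom for the co-opposite coalgebra, which holds precisely because $S^2=\mathbbm{1}$ --- so the involutory hypothesis enters a second time beyond the $S(S(u))=u$ cancellation you flag. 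The conclusion is unaffected, but the claim that the required order is the standard one $v^{(1)},S(v^{(2)}),v^{(3)}$ should be corrected.
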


\begin{proof}
	One proceeds as in the proof of Lemma~\ref{lem:heapTSD}
	as follows.
	We use the Sweedler notation $\Delta (x)=x^{(1)} \otimes x^{(2)}$ and %by coassociativity, 
	$(\Delta \otimes {\mathbbm 1}) \Delta (x)=x^{(11)} \otimes x^{(12)}\otimes x^{(2)}$.
	Then one computes 
	\begin{eqnarray*}
		\lefteqn{ \mu( \mu (x\otimes y^{(1)} \otimes z^{(1)} ) \otimes z^{(2)} \otimes y^{(2)} ) } \\
		&=& \mu(x \otimes y^{(1)} \otimes  \mu (z^{(1)}  \otimes z^{(2)} \otimes y^{(2)} ) \\
		&=& \mu(x \otimes y^{(1)} \otimes  y^{(2)} ) \epsilon (z)  \quad = \quad x\  \epsilon (y) \epsilon (z)  .
	\end{eqnarray*}
	Then we obtain 
	\begin{eqnarray*}
		\lefteqn{ \mu  ( \mu (x_1\otimes x_4^{(12)} \otimes x_5^{(12)} )\otimes  \mu (x_2\otimes x_4^{(11)} \otimes x_5^{(11)}  )\otimes \mu (x_3\otimes x_4^{(2)}  \otimes x_5^{(2)} ) ) }  \\
		&=& \mu (  x_1 \otimes \mu (    \mu (  x_2\otimes x_4^{(11)} \otimes x_5^{(11)}  ) \otimes  x_5^{(12)} \otimes x_4^{(12)}  ) \otimes \mu ( x_3\otimes x_4^{(2)} \otimes x_5^{(2)} )   \ )  \\
		&=&  \mu (   x_1 \otimes    x_2 \ \epsilon (x_4^{(1)} ) \epsilon (x_5^{(1)} ) \otimes \mu ( x_3\otimes x_4^{(2)}\otimes x_5^{(2)} ) \  )  \\
		&=&   \mu (  \mu (  x_1 \otimes    x_2 \otimes x_3 )  \otimes \epsilon (x_4^{(1)} ) \epsilon (x_5^{(1)} ) ( x_4^{(2)}\otimes x_5^{(2)} )  )  \\
		&= & \mu  (  \mu  (x_1\otimes x_2\otimes x_3) \otimes   x_4\otimes x_5) )
	\end{eqnarray*}
	as desired. 
\end{proof}
Our goal, next, is to show that a more general version of Lemma~\ref{lem:heapTSD} and Proposition~\ref{pro:hopfTSD} holds in an arbitrary symmetric monoidal category. 
We first have the following preliminary result. 

\begin{lemma}\label{lem:catuv}
	Let $(X,\Delta,\epsilon, \mu)$ be a heap object in a symmetric monoidal category with tensor product $\boxtimes$ and %braiding (i.e. 
	switching morphism $\tau$. Then the following identity of morphisms holds
	$$
	\mu (\mu \boxtimes \mathbbm{1}^{\boxtimes 2})\tau_{4,5}\tau_{3,4}( \mathbbm{1\boxtimes}\Delta\boxtimes \Delta) = \mathbbm{1}\boxtimes \epsilon\boxtimes \epsilon.
	$$
\end{lemma}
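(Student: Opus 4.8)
The plan is to internalize the three-step set-theoretic computation $[[x,y,z],z,y]=[x,y,[z,z,y]]=[x,y,y]=x$ appearing in the proof of Lemma~\ref{lem:heapTSD}. The left-hand side of the asserted identity is precisely the morphism modelling $[[x,y,z],z,y]$: the comultiplications in $\mathbbm{1}\boxtimes\Delta\boxtimes\Delta$ produce the two ``repeated'' arguments, while the switches $\tau_{3,4}$ and $\tau_{4,5}$ route them into position. Accordingly the ingredients will be, in order, the type $0$ para-associativity square $\mu(\mu\boxtimes\mathbbm{1}^{\boxtimes 2})=\mu(\mathbbm{1}^{\boxtimes 2}\boxtimes\mu)$ (which holds in any heap object, being implied by the type $1$ and type $2$ diagrams), the degeneracy triangle $\mu(\Delta\boxtimes\mathbbm{1})=\epsilon\boxtimes\mathbbm{1}$, and the degeneracy triangle $\mu(\mathbbm{1}\boxtimes\Delta)=\mathbbm{1}\boxtimes\epsilon$, together with naturality of the switching morphism $\tau$ and coassociativity of $\Delta$.

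First I would apply type $0$ para-associativity to replace $\mu(\mu\boxtimes\mathbbm{1}^{\boxtimes 2})$ by $\mu(\mathbbm{1}^{\boxtimes 2}\boxtimes\mu)$, turning the left-hand side into $\mu\,(\mathbbm{1}^{\boxtimes 2}\boxtimes\mu)\,\tau_{4,5}\tau_{3,4}\,(\mathbbm{1}\boxtimes\Delta\boxtimes\Delta)$; this is the passage from $[[x,y,z],z,y]$ to $[x,y,[z,z,y]]$. Next, naturality of $\tau$ (pulling the comultiplication applied to the third tensor factor past the two switches) yields the identity of morphisms
\[
\tau_{4,5}\tau_{3,4}\,(\mathbbm{1}\boxtimes\Delta\boxtimes\Delta)=(\mathbbm{1}^{\boxtimes 2}\boxtimes\Delta\boxtimes\mathbbm{1})\,(\mathbbm{1}^{\boxtimes 2}\boxtimes\tau)\,(\mathbbm{1}\boxtimes\Delta\boxtimes\mathbbm{1}),
\]
so that $(\mathbbm{1}^{\boxtimes 2}\boxtimes\mu)\,\tau_{4,5}\tau_{3,4}\,(\mathbbm{1}\boxtimes\Delta\boxtimes\Delta)=\bigl(\mathbbm{1}^{\boxtimes 2}\boxtimes\bigl(\mu(\Delta\boxtimes\mathbbm{1})\bigr)\bigr)\,(\mathbbm{1}^{\boxtimes 2}\boxtimes\tau)\,(\mathbbm{1}\boxtimes\Delta\boxtimes\mathbbm{1})$. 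Using the degeneracy triangle $\mu(\Delta\boxtimes\mathbbm{1})=\epsilon\boxtimes\mathbbm{1}$ and then once more naturality of $\tau$ on the resulting counit leg, this composite collapses to $\mathbbm{1}\boxtimes\Delta\boxtimes\epsilon$, the internalization of $[x,y,[z,z,y]]=[x,y,y]\,\epsilon(z)$. Finally, precomposing with the outer $\mu$ and invoking the remaining degeneracy triangle $\mu(\mathbbm{1}\boxtimes\Delta)=\mathbbm{1}\boxtimes\epsilon$ gives
\[
\mu\,(\mathbbm{1}\boxtimes\Delta\boxtimes\epsilon)=\bigl(\mu(\mathbbm{1}\boxtimes\Delta)\bigr)\boxtimes\epsilon=(\mathbbm{1}\boxtimes\epsilon)\boxtimes\epsilon=\mathbbm{1}\boxtimes\epsilon\boxtimes\epsilon,
\]
which is the claim.

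The one step that needs genuine care is the middle one: verifying that, after the type $0$ rewrite, the switches $\tau_{3,4}$ and $\tau_{4,5}$ together with $\mathbbm{1}\boxtimes\Delta\boxtimes\Delta$ really do reorganize the three input strands so that a copy of $\mu(\Delta\boxtimes\mathbbm{1})$ can be isolated on the strand coming from the third input. This is routine string-diagram bookkeeping, using only naturality of the symmetry and the coherence of the symmetric monoidal structure (all coherence isomorphisms being identities under the standing strictness assumption), but it should be written out carefully so that every reordering is legitimate. Once that identity of composites is in place, the remaining manipulations are direct substitutions of the three defining diagrams of a heap object, mirroring the set-theoretic argument of Lemma~\ref{lem:heapTSD} line by line.
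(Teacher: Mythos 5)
Your argument is correct and is essentially the paper's own proof: the paper establishes the same identity via a single commutative diagram whose constituent cells are exactly your steps — the naturality rewrite of $\tau_{4,5}\tau_{3,4}(\mathbbm{1}\boxtimes\Delta\boxtimes\Delta)$ through $\mathbbm{1}\boxtimes\Delta\boxtimes\mathbbm{1}$, the type $0$ para-associativity, and the two degeneracy triangles. (Coassociativity of $\Delta$, which you list among the ingredients, is not actually needed; otherwise the bookkeeping you single out checks out exactly as you state it.)
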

\begin{proof}
	We observe that the following commutative diagram implies our statement.
	$$
	\begin{tikzcd}
	  X^{\boxtimes 5} \arrow[rr,"\tau_{4,5}\tau_{3,4}"]& & X^{\boxtimes 5}\arrow[rdd,bend left = 45,"\mathbbm{1}^2\boxtimes \mu"] \arrow[rrdd,bend left = 40,"\mu\boxtimes \mathbbm{1}^2"]& &  \\
	  X^{\boxtimes 4}\arrow[drrr,"\mathbbm{1}^3\boxtimes \epsilon"]\arrow[u,swap,"\mathbbm{1}^3\boxtimes \Delta"]\arrow[rr,"\tau_{3,4}"] & & X^{\boxtimes 4}\arrow[u,"\mathbbm{1}^2\boxtimes \Delta\boxtimes \mathbbm{1}"]\arrow[dr,"\mathbbm{1}^2\boxtimes\epsilon\boxtimes \mathbbm{1}"] & & \\
	  & & & X^{\boxtimes 3}\arrow[ddd,"\mu"]& X^{\boxtimes 3}\arrow[dddl,"\mu"]\\
	  & X^{\boxtimes 2}\arrow[ddrr,"\mathbbm{1}\boxtimes\epsilon"]\arrow[urr,"\mathbbm{1}\boxtimes \Delta"]& & &\\
	  X^{\boxtimes 3}\arrow[uuuu,bend left = 30,"\mathbbm{1}\boxtimes\Delta\boxtimes \Delta"]\arrow[drrr,"\mathbbm{1}\boxtimes\epsilon\boxtimes\epsilon"]\arrow[ur,"\mathbbm{1}^2\boxtimes\epsilon"]\arrow[uuu,swap,"\mathbbm{1}\boxtimes \Delta\boxtimes\mathbbm{1}"] & & & & \\
	  & & &  X&
		\end{tikzcd}
		$$
	where the rectangle on top, and the two triangles below commute because of naturality of the switching morphism, while the other parts of the diagram commute by heap and comonoid axioms. 
\end{proof}

\begin{theorem}\label{thm:cat}
	Let $(X,\Delta,\epsilon, \mu)$ be a heap object in a symmetric monoidal category $\mathcal{C}$. Then $(X,\Delta,\epsilon, \mu)$ is also a ternary self-distributive object in  $\mathcal{C}$.
\end{theorem}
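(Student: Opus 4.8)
The plan is to carry out the categorical analogue of the elementwise computation in Lemma~\ref{lem:heapTSD} and Proposition~\ref{pro:hopfTSD}, with each use of the identity $[[x,y,z],z,y]=x$ replaced by Lemma~\ref{lem:catuv} and each elementwise rewriting replaced by the relevant commutative diagram. Expanding the left-hand side of the defining identity in Definition~\ref{def:TSD}, the comultiplications $(\Delta'\boxtimes\mathbbm{1})\Delta$ applied to the fourth and fifth strands produce three copies each of $x_4$ and $x_5$, and $\shuffle_3$ routes them so that the composite becomes $\mu$ applied to $\mu(x_1\boxtimes -\boxtimes -)\boxtimes\mu(x_2\boxtimes -\boxtimes -)\boxtimes\mu(x_3\boxtimes -\boxtimes -)$, each inner $\mu$ receiving one copy of $x_4$ and one of $x_5$.

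First I would rewrite the outermost occurrence of $\mu$ using the type~1 para-associativity diagram: reading the first inner $\mu$ as the component ``$\mu\boxtimes\mathbbm{1}^{\boxtimes 2}$'' of the left path, its value is transformed into the central path $\mu(\mathbbm{1}\boxtimes\mu\boxtimes\mathbbm{1})\tau_{321}$, with $x_4$ and $x_5$ occupying the middle slots and the other two inner $\mu$'s carried along. This is the categorical counterpart of
$$[[x_1,x_4,x_5],[x_2,x_4,x_5],[x_3,x_4,x_5]]=[x_1,[[x_2,x_4,x_5],x_5,x_4],[x_3,x_4,x_5]];$$
the choice of the opposite comultiplication $\Delta'$ in Definition~\ref{def:TSD} is precisely what leaves the copies of $x_4,x_5$ in the order that makes $\tau_{321}$ produce the needed arrangement. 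After this step the middle factor has the shape $\mu(\mu(x_2\boxtimes x_4'\boxtimes x_5')\boxtimes x_5''\boxtimes x_4'')$, where $x_4',x_4''$ (resp. $x_5',x_5''$) are two of the three comonoidal copies; coassociativity of $\Delta$ lets us regard each such pair as a single $\Delta$-splitting, so Lemma~\ref{lem:catuv} collapses this factor to $x_2$ precomposed with $\epsilon$ on those copies. Counitality, $(\epsilon\boxtimes\mathbbm{1})\Delta=\mathbbm{1}=(\mathbbm{1}\boxtimes\epsilon)\Delta$, then erases the two spent copies, so the remaining third copy in the factor $\mu(x_3\boxtimes -\boxtimes -)$ becomes the honest input, and the whole composite has been reduced to the right path $\mu(\mathbbm{1}^{\boxtimes 2}\boxtimes\mu)$ of the para-associativity square, i.e. the counterpart of $[x_1,x_2,[x_3,x_4,x_5]]$. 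Finally the type~0 para-associativity diagram, which holds since (as recorded before Definition~\ref{def:heapob}) it follows from types~1 and~2, identifies this with $\mu(\mu\boxtimes\mathbbm{1}^{\boxtimes 2})$, the right-hand side of Definition~\ref{def:TSD}.

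Throughout, every step that is not an instance of a heap axiom or of Lemma~\ref{lem:catuv} is a manipulation of $\Delta$'s, $\epsilon$'s and switching morphisms past one another and past $\mu$, justified by the comonoid axioms, naturality of $\tau$, and the interchange law for $\boxtimes$, exactly as in the string-diagram proof of Lemma~\ref{lem:catuv}. I expect the main obstacle to be purely combinatorial bookkeeping: verifying that the permutation $\shuffle_3$ given by the transpositions $(2\:4)(3\:7)(6\:8)$, composed with $(\Delta'\boxtimes\mathbbm{1})\Delta$ on the $x_4$- and $x_5$-strands, routes the nine strands so that (i) each inner $\mu$ receives its copies of $x_4,x_5$ in the order demanded by the type~1 diagram, and (ii) after that rewrite the two copies to be annihilated by Lemma~\ref{lem:catuv} sit adjacent in the pattern matching $\tau_{4,5}\tau_{3,4}(\mathbbm{1}\boxtimes\Delta\boxtimes\Delta)$. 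It is convenient to check (i)--(ii) by drawing the corresponding string diagram; once the routing is confirmed, the remaining equalities are forced and the proof closes.
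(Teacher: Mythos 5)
Your proposal is correct and follows essentially the same route as the paper's proof: the paper assembles one large commutative diagram whose cells are exactly your steps --- the type~1 para-associativity rewrite, the collapse of the middle factor via Lemma~\ref{lem:catuv}, the comonoid/counit and naturality-of-$\tau$ bookkeeping, and a final application of type~0 para-associativity --- mirroring the elementwise computation in Lemma~\ref{lem:heapTSD}. The combinatorial routing you flag as the main obstacle is likewise handled in the paper only by specifying the relevant transpositions and leaving the verification to a diagrammatic check.
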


\begin{proof}
	Since $(X,\Delta, \epsilon)$ is a comonoid in $\mathcal{C}$ by hypothesis, we just need to prove that ternary self-distributivity of $\mu$. We use the following commutative diagram
$$
\begin{tikzcd}
	&X^9\arrow[ddrr,"\mathbbm{1}^6\mu"]\arrow[r,"\tau"]& X^9\arrow[rr,"\mathbbm{1}^3\mu\mu"]& & X^5\arrow[dd,"\mu\mathbbm{1}^2"]\arrow[dddl,swap,"\tau\circ(\mathbbm{1}\mu\mathbbm{1})"]\\
	&&&&\\
	X^7\arrow[uur,"\mathbbm{1}^2\Delta'\Delta'\mathbbm{1}^3"]\arrow[r,"\mathbbm{1}^4\mu"]&X^5\arrow[drr,bend right = 10,swap,"\mathbbm{1}\epsilon\mathbbm{1}\epsilon\mathbbm{1}"]\arrow[r,"\mathbbm{1}^2\Delta'\Delta'\mathbbm{1}"]&X^7\arrow[r,"\tau"]&X^7\arrow[d,swap,"(\mathbbm{1}\mu\mathbbm{1})\circ(\mathbbm{1}\mu\mathbbm{1}^3)"]&X^3\arrow[dd,"\mu"]\\
	X^7\arrow[u,"\tau"]\arrow[r,"\mu\mathbbm{1}^4"]&X^5\arrow[rr,swap,"\mathbbm{1}\epsilon\mathbbm{1}\epsilon\mathbbm{1}"]&&X^3\arrow[dr,"\mu"]&\\
	X^5\arrow[rrru,"\mathbbm{1}^2\mu"]\arrow[u,"\mathbbm{1}^3\Delta\Delta"]\arrow[rr,"\mu\mathbbm{1}^2"]&&X^3\arrow[rr,"\mu"]&&X
\end{tikzcd}
$$
where we have omitted the symbol $\boxtimes$ in the product of morphisms, omitted the subscripts corresponding to the switching morphisms $\tau$, to slightly shorten the notation and, finally, we have used the notation $\circ$ to indicate the composition of morphisms. The leftmost map $\tau: X^7 \rightarrow X^7$ is the composition of symmetry constraints corresponding to the transposition $(5\: 6)(4\: 5)(5\: 6)(4\: 5)(3\: 4)$, proceeding clockwise, $\tau: X^9 \rightarrow X^9$ corresponds to $(3\: 4)(4\: 5)(2\: 3)$. The reader can easily find the correct compositions corresponding to the remaining $\tau$'s by a diagrammatic approach. The triangles on the right and at the bottom, are instances of type 1 and type 0 axioms, respectively. The middle triangle commutes as a consequence of Lemma \ref{lem:catuv}. The other diagrams can be seen to be commutative either by applying the comonoid axioms or naturality of the switching morphism. Finally, by direct inspection we  see that the upper perimeter of the diagram corresponds to the LHS of TSD, as stated in Definition \ref{def:TSD}. This completes the proof.
	\end{proof}

 \noindent
 {\bf Acknowledgment.}
 M.S. was  supported in part  by   NSF DMS-1800443.

\end{document}